\documentclass{article}

\usepackage{amsmath}
\usepackage{amssymb}
\usepackage{amsthm}
\usepackage{enumerate}
\usepackage{cases}
\usepackage{stmaryrd}
\usepackage{color}
\usepackage[all]{xy}
\usepackage{hyperref}

\usepackage[top=1in, bottom=1in, left=1in, right=1in]{geometry}

\usepackage{fancyhdr}
\chead{Pseudo-harmonic Maps}

\numberwithin{equation}{section}  

\newtheorem{lemma}{Lemma}[section]
\newtheorem{theorem}[lemma]{Theorem}

\newtheorem{corollary}[lemma]{Corollary}

\theoremstyle{definition}
\newtheorem{definition}[lemma]{Definition}

\theoremstyle{remark}
\newtheorem{example}[lemma]{Example}

\newcommand{\ii}{\sqrt{-1}}   

\newcommand{\ba}{\bar{\alpha}}   
\newcommand{\bb}{\bar{\beta}}

\newcommand{\bm}{\bar{\mu}}

\newcommand{\la}{\langle}
\newcommand{\ra}{\rangle}

\newcommand{\Rmnum}[1]{\uppercase\expandafter{\romannumeral #1}}

\title{Pseudo-Harmonic Maps From Pseudo-Hermitian Manifolds to Riemannian Manifolds}

\author{Yibin Ren\footnote{Supported by NSFC Tianyuan fund for Mathematics Grant No. 11626217.} \and Guilin Yang}

\date{}

\begin{document}

\maketitle

\begin{abstract}
    In this paper, we discuss the heat flow of a pseudo-harmonic map from a closed pseudo-Hermitian manifold to a Riemannian manifold with non-positive sectional curvature, and prove the existence of the pseudo-harmonic map which is a generalization of Eells-Sampson's existence theorem. We also discuss the uniqueness of the pseudo-harmonic representative of its homotopy class which is a generalization of Hartman theorem, provided that the target manifold has negative sectional curvature.
\end{abstract}

\section{Introduction}

Let $(M^{2m +1}, HM, J_b, \theta)$ be a closed pseudo-Hermitian manifold with horizontal bundle $HM$, almost complex structure $J_b$, pseudo-Hermitian structure $\theta$ and real dimension $2m +1$. There is naturally a sub-Riemannian structure which contains the horizontal bundle and the sub-Riemannian metric $G_\theta$ (See Section \ref{s2} for details). Suppose $(N, h)$ is a Riemannian manifold and $f : M \to N$ is a smooth map. The horizontal energy $E_H$ is defined by 
\begin{align*}
	E_H (f) = \frac{1}{2} \int_M \langle G_\theta , f^* h \rangle \theta \wedge (d \theta)^m = \frac{1}{2} \int_M | d f \circ \pi_H |^2 \theta \wedge (d \theta)^m
\end{align*}
where $\pi_H : TM \to HM$ is the horizontal projection. A smooth map $f : M \to N$ is called pseudo-harmonic by E. Barletta, S. Dragomir and H. Urakawa \cite{barletta2001pseudoharmonic} if it is a critical point of $E_H$ whose local Euler-Lagrange equation is
\begin{align*}
	\Delta_H f^i + \Gamma^i_{jk} \langle \nabla_H f^j, \nabla_H f^k \rangle =0,
\end{align*}
where $\Delta_H = div (\nabla_H)$ is the sub-Laplacian and $\nabla_H f^j = \pi_H \nabla f^j$ is the horizontal gradient of $f^j$. Clearly, it is also called subelliptic harmonic map in sub-Riemannian terminology (cf. \cite{jost1998subelliptic,zhou2013heat}). 

Due to the variational structure of horizontal energy $E_H$, a natural question is the existence of pseudo-harmonic maps or subelliptic maps. Under a convexity condition of target manifold, Jost and Xu \cite{jost1998subelliptic} obtained the existence of the Dirichlet problem of subelliptic harmonic maps. Through the heat flow method, Zhou \cite{zhou2013heat} deduced Eells-Sampson's existence theorem for subelliptic harmonic maps under the $\Gamma$-tensor vanishing condition of the domain. It is notable that the $\Gamma$-tensor of a pseudo-Hermitian manifold won't vanish since $HM$ satisfies the strong bracket generating hypothesis. For pseudo-harmonic case, S. C. Chang and T. H. Chang \cite{chang2013existence} used the heat flow method again and proved Eells-Sampson's existence theorem under the condition $[\Delta_H, T] =0 $ on the domain where $\Delta_H$ is the sub-Laplacian and $T$ is the Reeb vector field. It is also remarkable that Jost and Yang \cite{jost2005heat} discussed the heat flows of horizontal harmonic maps from Riemannian manifolds to Carnot-Carath\'eodory space.

In this paper, we will improve the method of S. C. Chang and T. H. Chang \cite{chang2013existence} to obtain the complete Eells-Sampson's existence theorem for pseudo-harmonic maps without the commutation condition. We will also obtain the Hartman's type theorem, that is any homotopy class from a pseudo-Hermitian manifold to a Riemannian manifold with negative sectional curvature has a unique pseudo-harmonic map. 
The paper is arranged as follows. Section \ref{s2} briefly recalls the basic notions in pseudo-Hermitian geometry and pseudo-harmonic maps; Section \ref{s3} introduces the $L^p$ theory of hypoelliptic operators to derive the regularity of pseudo-harmonic maps and heat flows; Section \ref{s4} deduces the short-time existence of a pseudo-harmonic heat flow by the heat kernel of the subelliptic parabolic operator; In Section \ref{s1}, we control the total energy by the horizontal energy near the initial time which guarantees the long-time existence of a pseudo-harmonic heat flow and its convergence to a pseudo-harmonic map; In Section \ref{s5}, the Hartman type theorem for pseudo-harmonic maps will be proved.

\section{Pseudo-Hermitian Manifolds and Pseudo-Harmonic Maps} \label{s2}
In this section, we present some basic notions and of pseudo-Hermitian geometry and pseudo-harmonic maps. For details, the readers may refer to \cite{dragomir2006differential,Ren201447,webster1978pseudo}. Recall that a smooth manifold $M$ of real dimension ($2n+1$) is said to be a CR manifold if
there exists a smooth rank $n$ complex subbundle $T_{1,0} M \subset TM \otimes \mathbb{C}$ such that
\begin{gather}
T_{1,0} M \cap T_{0,1} M =0 \\
[\Gamma (T_{1,0} M), \Gamma (T_{1,0} M)] \subset \Gamma (T_{1,0} M) \label{a-integrable}
\end{gather}
where $T_{0,1} M = \overline{T_{1,0} M}$ is the complex conjugate of $T_{1,0} M$.
Equivalently, the CR structure may also be described by the real subbundle $HM = Re \: \{ T_{1,0}M \oplus T_{0,1}M \}$ of $TM$ which carries a almost complex structure $J_b : HM \rightarrow HM$ defined by $J_b (X+\overline{X})= i (X-\overline{X})$ for any $X \in T_{1,0} M$.
Since $HM$ is naturally oriented by the almost complex structure $J_b$, then $M$ is orientable if and only if
there exists a global nowhere vanishing 1-form $\theta$ such that $ HM = Ker (\theta) $.
Any such section $\theta$ is referred to as a pseudo-Hermitian structure on $M$. The space of all pseudo-Hermitian structure is 1-dimensional The Levi form $L_\theta $ of a given pseudo-Hermitian structure is defined by
$$L_\theta (X, Y ) = d \theta (X, J_b Y)  $$
for any $X, Y \in HM$. 
An orientable CR manifold $(M, HM, J_b)$ is called strictly pseudo-convex if $L_\theta$ is positive definite for some $\theta$.

When $(M, HM, J_b)$ is strictly pseudo-convex, there exists a pseudo-Hermitian structure $\theta$ such that $L_\theta$ is positive. The quadruple $( M, HM, J_b, \theta )$ is called a pseudo-Hermitian manifold. This paper is discussed in the pseudo-Hermitian manifolds.

For a pseudo-Hermitian manifold $(M, HM, J_b, \theta)$, there exists a unique nowhere zero vector field $T$, called the Reeb vector field, transverse to $HM$ satisfying
$T \lrcorner \: \theta =1, \ T \lrcorner \: d \theta =0$. 
There is a decomposition of the tangent bundle $TM$: 
\begin{align}
TM = HM \oplus \mathbb{R} T \label{a-webstermetric}
\end{align}
which induces the projection $\pi_H : TM \to HM$. Set $G_\theta = \pi_H^* L_\theta$. Since $L_\theta$ is a metric on $HM$, it is natural to define a Riemannian metric
\begin{align}
g_\theta = G_\theta + \theta \otimes \theta
\end{align}
which makes $HM$ and $\mathbb{R} T$ orthogonal. Such metric $g_\theta$ is called Webster metric. In this paper, we denote it by $\la \cdot , \cdot \ra$.
In the terminology of foliation geometry, $\mathbb{R} T$ provides a one-dimensional Reeb foliation and $HM$ is its horizontal distribution.
By requiring that $J_b T=0$, the complex structure $J_b$ can be extended to an endomorphism of $TM$.
The integrable condition \eqref{a-integrable} guarantees that $g_\theta$ is $J_b$-invariant.
Clearly $\theta \wedge (d \theta)^n$ differs a constant with the volume form of $g_\theta$.
Henceforth we will regard it as the volume form and always omit it or denote it by $d V$ for simplicity.

On a pseudo-Hermitian manifold, there exists a canonical connection $\nabla^M$ preserving the horizontal bundle, the CR structure and the Webster metric. Moreover, its torsion satisfies
\begin{gather*}
T_{\nabla^M} (X, Y)= 2 d \theta (X, Y) T  \mbox{ and }T_{\nabla^M} (T, J_b X) + J_b T_{\nabla^M} (T, X) =0.
\end{gather*}


The pseudo-Hermitian torsion, denoted by $A$, is a symmetric tensor defined by $A = g_\theta ( T_{\nabla^M} (T,X), Y)$ for any $X, Y \in TM$ (cf. \cite{dragomir2006differential}).
A pseudo-Hermitian manifold is called Sasakian if $A \equiv 0$.

Let $(M, HM, J_b, \theta)$ be a pseudo-Hermitian manifold of dimension $2n+1$. Let $\{ \eta_1, \dots, \eta_n \}$ be a local orthonormal frame of $T_{1,0} M$ defined on an open set $U \subset M$ , and $\{ \theta^1, \dots \theta^n \}$ its dual coframe. 
Then the structure equations are given by
\begin{numcases}{ }
d \theta = 2 \ii \theta^\alpha \wedge \theta^{\ba} , \nonumber \\
d \theta^\alpha = \theta^\beta \wedge \theta^\alpha_\beta + A_{\bar{\alpha} \bar{\beta}} \theta \wedge \theta^\beta , \nonumber  \\
\theta^\alpha_\beta + \theta^{\bb}_{\ba}= 0 , \nonumber \\
d \theta^\alpha_\beta = \theta^\gamma_\beta \wedge \theta^\alpha_\gamma + \Pi^\alpha_\beta \nonumber
\end{numcases}
where $\theta^\alpha_\beta$'s are the Tanaka-Webster connection 1-forms with respect to $\{ \eta_\alpha \}$. S. M. Webster \cite{webster1978pseudo} showed that
\begin{align*}
\Pi_\beta^\alpha= 2 \ii (\theta^\alpha \wedge \tau_\beta + \theta_\beta \wedge \tau^\alpha) + R^\alpha_{\beta \lambda \bm} \theta^\lambda \wedge \theta^{\bm} + A^\alpha_{\bm, \beta} \theta \wedge \theta^{\bm} - A_{\mu \beta ,}^{\quad \; \alpha} \theta \wedge \theta^\mu
\end{align*}
where $R^\alpha_{\beta \lambda \bm}$ is called the Webster curvature.
He also derived the first Bianchi identity, i.e.  $R_{\ba \beta \lambda \bm} = R_{\ba \lambda \beta \bm}$. So the pseudo-Hermitian Ricci curvature can be defined by $R_{\lambda \bm}= R_{\ba \alpha \lambda \bm}$ and then the pseudo-Hermitian scalar curvature is $ R = R_{\alpha \ba}=R_{\bb \beta \alpha \ba}$.  For more discussion of curvatures, one can refer to \cite{dragomir2006differential}.

Assume that $(N,h)$ is a Riemannian manifold. Let $\{ \sigma^i \}$ be an orthonormal frame of $T^*N$ and $\{ \xi_i \}$ its dual frame of $TN$. Denote by $\nabla^N$ the Levi-Civita connection of $(N,h)$. Suppose that $f: M \rightarrow N$ is a smooth map. The pullback connection $\nabla$ on the pullback bundle $f^*(TN)$ and the Tanaka-Webster connection induce a connection on $TM \otimes f^*(TN)$, also denoted by $\nabla$. Let $f^i_{AB}$ be the components of $\nabla df$ under the frame $\{ \theta^A \} = \{ \theta^0=\theta, \theta^\alpha, \theta^{\ba} \}$ and $\{ \xi_i \}$.

\begin{definition}
A smooth map $f: M \rightarrow N$ is called pseudo-harmonic if the tensor field $$\tau (f) = trace_{G_\theta} \nabla d f |_{HM \times HM} $$ vanishes.
\end{definition}

Actually, pseudo-harmonic maps are the critical points of the horizontal energy (cf. \cite{dragomir2006differential})
\begin{align}
E_H (f) =  \int_M e_H (f) \theta \wedge (d \theta)^m
\end{align}
where $e_H (f) =\frac{1}{2} |d_H f|^2$ is the horizontal energy density.

Suppose that $N$ is compact. By the Nash embedding theorem, $(N,h)$ can be isometrically embedded in $\mathbb{R}^{K}$ for some positive integer $K$. The embedding map is denoted by $\iota$. The tubular neighborhood theorem guarantees that there exists a neighborhood $B(N)$ of $\iota (N)$ which is diffeomorphic to a neighborhood of the zero section in the normal bundle, and a projection map $P : B(N) \rightarrow \iota (N)$ that is a submersion. Moreover, the projection map is given by mapping any point in $B (N)$ to its closest point in $N$. Clearly $\iota = P \circ \iota$ and for any $x \in \iota (N)$, $d P_x : T_x \mathbb{R}^K \rightarrow T_x \mathbb{R}^K$ annihilates all vertical vector fields. Let $f :  M \rightarrow N$ be a smooth map and $u = \iota \circ f$. Now we deduce the expression of $\tau  (f)$ under the coordinate components of $u$. The composition law (cf. Proposition 2.20 in \cite{eells1983selected}) implies that
\begin{align*}
\tau (u) =& d \iota (\tau  (f) ) + trace_{G_\theta} (\nabla d \iota) (d_H f, d_H f), \\
\nabla d \iota =& d P ( \nabla d \iota) + \nabla d P (d \iota, d \iota)  ,
\end{align*}
where $\nabla d \iota $ is the second fundamental form of $N$ and $d_H f= df |_{HM}$, then $\nabla d \iota = \nabla d P (d \iota, d \iota)$ and thus
\begin{align*}
d \iota (\tau  (f) )  = \tau (u) - trace_{G_\theta} (\nabla d P) (d_H u, d_H u).
\end{align*}
Let $\{ x^a \}$ be the natural coordinates of $\mathbb{R}^K$ and $u^a = x^a \circ u$, $P^a = x^a \circ P$. Then
\begin{align}
d \iota (\tau  (f) )  = \left( \Delta_H u^a - P^a_{bc} (u) \la \nabla_H u^b , \nabla_H u^c \ra \right) \frac{\partial }{\partial x^a}. \label{a4}
\end{align}
where $\Delta_H$ is the sub-Laplacian and $P^a_{bc} = \frac{\partial^2 P^a }{\partial x^b \partial x^c}$.

\begin{lemma}
Assume that $f : M \rightarrow N$. Then $f$ is pseudo-harmonic if and only if
\begin{align}
\Delta_H u^a - P^a_{bc} (u) \la \nabla_H u^b , \nabla_H u^c \ra = 0. \label{a8}
\end{align}
\end{lemma}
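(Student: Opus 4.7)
The plan is to read the lemma essentially off equation~\eqref{a4}, which was derived just before the statement using the Nash embedding, the retraction $P$, and the composition law for second fundamental forms. Nothing substantive remains beyond a pointwise linear-algebra observation about $d\iota$.

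First I would observe that, because $\iota : N \hookrightarrow \mathbb{R}^K$ is an isometric embedding, its differential $d\iota_y : T_y N \to T_{\iota(y)} \mathbb{R}^K$ is injective at every $y \in N$. Applying this fiberwise, for any section $V$ of $f^{\ast}(TN)$ we have $V \equiv 0$ on $M$ if and only if $d\iota(V) \equiv 0$ on $M$. In particular, $\tau(f) = 0$ if and only if $d\iota(\tau(f)) = 0$.

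Next I would apply this equivalence to $V = \tau(f)$ and invoke~\eqref{a4}. That formula writes $d\iota(\tau(f))$ in the global frame $\{\partial/\partial x^a\}$ of $T\mathbb{R}^K$ with components $\Delta_H u^a - P^a_{bc}(u)\,\langle \nabla_H u^b, \nabla_H u^c\rangle$. Since $\{\partial/\partial x^a\}$ is linearly independent at every point, the vanishing of $d\iota(\tau(f))$ is equivalent to the vanishing of each of these components, which is exactly~\eqref{a8}. Chaining the two equivalences yields the claim.

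The only subtle point, which the expected main obstacle would be if it were not already handled, is the consistency of~\eqref{a4}: the right-hand side a priori lives in $T\mathbb{R}^K$, whereas $d\iota(\tau(f))$ must lie in the tangential subspace $d\iota(TN)$. The preceding derivation takes care of exactly this, by splitting $\nabla d\iota = dP(\nabla d\iota) + \nabla dP(d\iota, d\iota)$ and using $P \circ \iota = \iota$ to conclude $\nabla d\iota = \nabla dP(d\iota, d\iota)$; this cancellation is what allows the normal part to be absorbed into the $P^a_{bc}$-term and makes the bracketed expression in~\eqref{a4} automatically tangential to $\iota(N)$. With that already in hand, the proof of the lemma reduces to the short injectivity argument above.
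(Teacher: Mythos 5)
Your argument is correct and is essentially the paper's own: the paper derives \eqref{a4} immediately before the lemma and leaves the remaining step implicit, which is precisely the injectivity of $d\iota$ (and the pointwise linear independence of $\{\partial/\partial x^a\}$) that you spell out. No gap; your remark on the tangentiality of the right-hand side of \eqref{a4} is a fair observation but, as you note, is already handled by the identity $\nabla d\iota = \nabla dP(d\iota,d\iota)$ in the preceding derivation.
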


As the harmonic map, to obtain the existence of pseudo-harmonic map, one way is to solve the pseudo-harmonic flow
\begin{align}
\frac{\partial f}{\partial t} = \tau  (f) \label{a1}
\end{align}
where $f : [0, +\infty) \times M \rightarrow N $. We define the map $ \rho : B(N) \rightarrow \mathbb{R}^K $ by
\begin{align*}
\rho(p) = p- P(p).
\end{align*}
Clearly, $\rho (p) $ is normal to $N$ and $\rho (p) =0$ if and only if $p \in N$. The next lemma establishes the fact that in order to solve \eqref{a1}, it suffices to solve the system
\begin{align}
\frac{\partial u^a}{\partial t} = \Delta_H u^a - P^a_{bc} (u) \la \nabla_H u^b , \nabla_H u^c \ra. \label{a2}
\end{align}

\begin{lemma}
Let
$$
u = (u^1 , \dots, u^K)\in C^\infty ( M \times (0, T_0) , B(N) ) \cap C^0 (M \times [0, T_0), B(N))
$$
for some $T_0 \in (0, \infty] $ be a solution of \eqref{a2} with the initial condition $\phi = (\phi^1, \dots, \phi^K) \in C^\infty ( M , B(N) )$, then the quantity
\begin{align*}
\int_M | \rho (u(x ,t)) |^2
\end{align*}
is a nonincreasing function of $t$. In particular, if $\phi (M ) \subset N$, then $f ( x, t) \in N$ for all $(x, t) \in M \times (0, T_0)$.
\end{lemma}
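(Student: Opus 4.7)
The strategy is to compute the parabolic operator $(\partial_t - \Delta_H)$ applied to $|\rho(u)|^2$, show that it is nonpositive pointwise, and then integrate over the closed manifold $M$.

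First I would compute $(\partial_t - \Delta_H) \rho^a(u)$. Writing $\rho^a = x^a - P^a$ so that $\rho^a_b = \delta^a_b - P^a_b$ and $\rho^a_{bc} = -P^a_{bc}$, the chain rule gives
\begin{align*}
\nabla_H \rho^a(u) &= (\delta^a_b - P^a_b(u)) \nabla_H u^b, \\
\Delta_H \rho^a(u) &= (\delta^a_b - P^a_b(u))\,\Delta_H u^b - P^a_{bc}(u)\,\langle \nabla_H u^b, \nabla_H u^c\rangle, \\
\partial_t \rho^a(u) &= (\delta^a_b - P^a_b(u))\,\partial_t u^b.
\end{align*}
Substituting the equation \eqref{a2} for $\partial_t u^b - \Delta_H u^b$, the two terms $\pm P^a_{cd}(u)\langle \nabla_H u^c, \nabla_H u^d\rangle$ cancel, leaving
\begin{align*}
(\partial_t - \Delta_H) \rho^a(u) = P^a_b(u)\, P^b_{cd}(u)\, \langle \nabla_H u^c, \nabla_H u^d\rangle.
\end{align*}

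Next I would use the product/Leibniz rule for the sub-Laplacian to get
\begin{align*}
(\partial_t - \Delta_H) |\rho(u)|^2 = 2\,\rho^a(u)\, (\partial_t - \Delta_H) \rho^a(u) - 2\,|\nabla_H \rho(u)|^2.
\end{align*}
The main (and only substantive) point is that the first term on the right vanishes. This is the geometric input: for any $p \in B(N)$, the vector $\rho(p) = p - P(p)$ is normal to $N$ at $P(p)$ (because $P(p)$ is the closest point to $p$), while the differential $dP_p$ takes values in $T_{P(p)} N$. In coordinates this reads exactly $\rho^a(u)\, P^a_b(u) = 0$, so
\begin{align*}
\rho^a(u)\,P^a_b(u)\,P^b_{cd}(u)\,\langle \nabla_H u^c, \nabla_H u^d\rangle = 0,
\end{align*}
and consequently $(\partial_t - \Delta_H) |\rho(u)|^2 = -2|\nabla_H \rho(u)|^2 \le 0$.

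Finally, since $M$ is closed, integration by parts gives $\int_M \Delta_H |\rho(u)|^2 \, dV = 0$, so
\begin{align*}
\frac{d}{dt} \int_M |\rho(u(\cdot,t))|^2 \, dV = -2 \int_M |\nabla_H \rho(u)|^2\, dV \le 0,
\end{align*}
which is the desired monotonicity. If $\phi(M) \subset N$, then $\rho(\phi) \equiv 0$, so the integral vanishes at $t=0$ and hence identically; combined with continuity of $u$ up to $t=0$ and smoothness for $t>0$, this forces $\rho(u(x,t)) \equiv 0$, i.e.\ $u(x,t) \in N$. The potential obstacle is purely bookkeeping: making sure the orthogonality $\rho(p) \perp dP_p(T_p \mathbb{R}^K)$ is applied at the point $p = u(x,t) \in B(N)$ rather than at a point of $N$, and that the two curvature-type terms produced by $\Delta_H$ and by the nonlinear term in \eqref{a2} cancel exactly. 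Once these are checked the rest is automatic.
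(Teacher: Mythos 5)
Your proposal is correct and follows essentially the same route as the paper: the chain rule for $\Delta_H$ applied to $P\circ u$, cancellation of the nonlinear terms via equation \eqref{a2}, the orthogonality of $\rho(p)=p-P(p)$ to the image of $dP_p$ (the paper phrases this as $dP(\cdot)$ being tangent to $N$ while $\rho(u)$ is normal), and integration by parts over the closed manifold. The only cosmetic difference is that you isolate $(\partial_t-\Delta_H)\rho^a(u)=P^a_b P^b_{cd}\langle\nabla_H u^c,\nabla_H u^d\rangle$ before pairing with $\rho^a$, whereas the paper pairs first; the substance is identical, and your explicit treatment of the ``in particular'' clause is the standard completion the paper leaves implicit.
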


\begin{proof}
Set $P^a_b = \frac{\partial P^a}{\partial x^b}$. Then $\rho^a_b = \frac{\partial \rho^a}{\partial x^b} = \delta^a_b - P^a_b$.
The composition law implies that
\begin{align*}
\Delta_H (P ( u) )^a = P^a_b (u) \Delta_H u^b + P^a_{bc} (u) \la \nabla_H u^b , \nabla_H u^c \ra.
\end{align*}
Thus we have
\begin{align*}
\Delta_H (\rho (u))^a = & \triangle_{H} u^a - \Delta_H (P ( u ) )^a \\
= & \rho^a_b (u) \Delta_H u^b - P^a_{bc} (u) \la \nabla_H u^b , \nabla_H u^c \ra \\
= & \rho^a_b (u) \partial_t  u^b + P^a_b (u)  ( \partial_t - \Delta_H ) u^b
\end{align*}
where the last equality is due the equation \eqref{a2} and $\partial_t = \frac{\partial}{\partial t}$. Since
$$
\big( P^a_b (u)  ( \partial_t - \Delta_H ) u^b \big) \frac{\partial }{\partial x^a} = d P \bigg( ( \partial_t - \Delta_H )  u^a \frac{\partial }{\partial x^a} \bigg)
$$
is tangent to $N$ and $\rho (u)$ is normal vector field, we find
\begin{align*}
\sum_a (\rho (u))^a \Delta_H (\rho (u))^a = \sum_{a, b} (\rho (u))^a \rho^a_b (u) \partial_t  u^b .
\end{align*}
Hence
\begin{align*}
\frac{d}{dt} \int_M | \rho (u(x ,t)) |^2 =& 2 \int_M \sum_{a, b} (\rho (u))^a \rho^a_b (u) \frac{\partial u^b}{\partial t}  =  2 \int_M \sum_a (\rho (u))^a \Delta_H (\rho (u))^a \\
= & - 2 \int_M \sum_a | \nabla_H (\rho (u))^a |^2 \leq  0
\end{align*}
which yields that $ \int_M | \rho (u(x ,t)) |^2$ is monotonic decreasing.
\end{proof}

By using \eqref{a4}, we have the following theorem.

\begin{theorem} \label{at3}
Suppose that $(M, HM, J_b, \theta)$ is a closed pseudo-Hermitian manifold and $(N, h)$ is a compact Riemannian manifold which is isometrically embedded by $\iota$ in $(\mathbb{R}^K, g_{can})$. Let $B(N)$ be the tubular neighborhood of $\iota (N)$ and $P: B(N) \rightarrow \iota (N) $ the closest point projection map. Let $\phi :  M \rightarrow N $ be a smooth map from $M$ into $N \subset \mathbb{R}^K$ given by $\phi = (\phi^1, \dots, \phi^K)$ in the coordinates $\{ x^a \}$. Assume that
$$
u = (u^1, \dots, u^K) \in C^\infty (M \times  (0, T_0), B(N))  \cap C^0 (M \times [0, T_0),  B(N))
$$
for some $T_0 \in (0, \infty]$ is the solution of the subelliptic parabolic system
\begin{align}
\frac{\partial u^a}{\partial t} = \Delta_H u^a - P^a_{bc} (u) \la \nabla_H u^b , \nabla_H u^c \ra \label{a3}
\end{align}
with the initial condition
\begin{align}
u^a (p, 0) = \phi^a (p), \mbox{ for all } p \in M.
\end{align}
Then $Image (u) \subset \iota(N)$ and
$$
f= \iota^{-1} (u) \in C^\infty (M \times (0, T_0), N) \cap C^0 (M \times [0, T_0),  N)
$$
solves the pseudo-harmonic flow
\begin{align}
\frac{\partial f}{\partial t}  = \tau (f) \label{a5}
\end{align}
with the initial condition
\begin{align}
f(p, 0) = \phi(p).
\end{align}
\end{theorem}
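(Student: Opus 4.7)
The strategy is to combine the previous lemma's monotonicity formula with the composition law identity \eqref{a4}, and then invoke the injectivity of $d\iota$ to transfer the subelliptic parabolic system on $\mathbb{R}^K$ back to the pseudo-harmonic flow on $N$.

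First, I would show that $\mathrm{Image}(u) \subset \iota(N)$. Since $\phi(M) \subset N$, the initial condition gives $\rho(u(\cdot,0)) = \rho(\phi) \equiv 0$, so $\int_M |\rho(u(x,0))|^2 = 0$. The previous lemma applies in the smooth regime $t \in (0, T_0)$ and shows $t \mapsto \int_M |\rho(u(x,t))|^2$ is nonincreasing; combined with $C^0$-continuity of $u$ on $M \times [0,T_0)$ and the nonnegativity of the integrand, this forces $\rho(u(x,t)) \equiv 0$, i.e.\ $u(x,t) \in \iota(N)$ for every $(x,t) \in M \times [0,T_0)$.

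Second, with $u$ now taking values in $\iota(N)$, the map $f = \iota^{-1} \circ u$ is well-defined and inherits the regularity class $C^\infty(M \times (0,T_0), N) \cap C^0(M \times [0,T_0), N)$ from $u$ (since $\iota$ is a smooth embedding of $N$ into a neighborhood of itself in $\mathbb{R}^K$). The initial condition $f(p,0) = \iota^{-1}(\phi(p)) = \phi(p)$ is immediate.

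Third, to verify the flow equation, I would differentiate $u = \iota \circ f$ in $t$ to obtain
\begin{align*}
\frac{\partial u^a}{\partial t}\, \frac{\partial}{\partial x^a} = d\iota\!\left(\frac{\partial f}{\partial t}\right).
\end{align*}
On the other hand, formula \eqref{a4} (applied pointwise in $t$, using that $u(\cdot,t)$ is a smooth map into $N$) gives
\begin{align*}
d\iota(\tau(f)) = \bigl(\Delta_H u^a - P^a_{bc}(u)\la \nabla_H u^b, \nabla_H u^c\ra\bigr)\, \frac{\partial}{\partial x^a},
\end{align*}
and the right-hand side equals $\partial_t u^a\, \partial/\partial x^a$ by the subelliptic system \eqref{a3}. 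Therefore $d\iota(\partial_t f) = d\iota(\tau(f))$, and injectivity of $d\iota$ yields $\partial_t f = \tau(f)$.

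There is no substantive obstacle here once the previous lemma is in hand; the only subtle point is justifying that the monotonicity conclusion $\rho(u) \equiv 0$ extends all the way down to $t = 0^+$. This is handled by the assumed $C^0$-continuity up to $t = 0$, which ensures that $\lim_{t \to 0^+} \int_M |\rho(u(x,t))|^2 = 0$ and hence the nonincreasing, nonnegative function vanishes identically.
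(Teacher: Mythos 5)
Your proposal is correct and follows essentially the same route the paper intends: the paper derives this theorem directly from the preceding monotonicity lemma (which forces $\rho(u)\equiv 0$ when $\phi(M)\subset N$) together with the identity \eqref{a4}, which is exactly how you argue. Your added remarks on continuity down to $t=0^+$ and the injectivity of $d\iota$ just make explicit the steps the paper leaves implicit.
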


At the end of this section, we recall the CR Bochner formulas. The reader could refer to \cite{chang2013existence,greenleaf1985first,Ren201447}.

\begin{lemma}[CR Bochner Formulas, cf. \cite{Ren201447}] \label{at4}
For any smooth map $f: M \rightarrow N $, we have
\begin{align}
\frac{1}{2} \Delta_H |d_H f|^2=& |\nabla_H d_H f|^2 + \la \nabla_H \tau  (f), d_H f \ra + 4 \ii (f^i_{\ba} f^i_{0 \alpha } - f^i_\alpha f^i_{0  \ba} ) \nonumber \\
& + 2 R^M_{\alpha \bb} f^i_{\ba} f^i_{\beta} - 2\ii (n-2) (f^i_\alpha f^i_\beta A_{\ba \bb}- f^i_{\ba} f^i_{\bb} A_{\alpha \beta}  ) \nonumber \\
&+ 2 (f^i_{\ba} f^j_{\beta} f^k_{\bb} f^l_{\alpha} R^N_{ijkl} +f^i_{\alpha} f^j_{\beta} f^k_{\bb} f^l_{\ba} R^N_{ijkl}   ) \\
\frac{1}{2} \Delta_H |f_0|^2 =& |\nabla_H f_0|^2 + \la \nabla_T \tau (f) , f_0 \ra + 2 f^i_{0} f^j_{\alpha} f^k_{\ba} f^l_{0} R^N_{ijkl} \nonumber \\
& + 2 (f^i_0 f^i_\beta A_{\bb \ba, \alpha} + f^i_0 f^i_{\bb} A_{\beta \alpha, \ba} + f^i_0 f^i_{\bb \ba} A_{\beta \alpha } + f^i_0 f^i_{\beta \alpha } A_{\bb \ba} )
\end{align}
where $f^i_{AB}$ be the components of $\nabla df$ under the orthonormal coframe $ \{ \theta , \theta^\alpha, \theta^{\ba} \}$ of $T^*M$ and the orthonormal frame $\{ \xi_i \}$ of $TN$.
\end{lemma}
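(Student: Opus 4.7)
The plan is to work in a local admissible unitary coframe $\{\theta, \theta^\alpha, \theta^{\ba}\}$ so that
\[
\tfrac{1}{2}|d_H f|^2 = \sum_{i,\alpha} f^i_\alpha f^i_{\ba}, \qquad |f_0|^2 = \sum_i (f^i_0)^2,
\]
the sub-Laplacian of a scalar $\varphi$ reads $\Delta_H\varphi = \sum_\alpha(\varphi_{\alpha\ba} + \varphi_{\ba\alpha})$, and the tension field has components $\tau(f)^i = \sum_\alpha(f^i_{\alpha\ba} + f^i_{\ba\alpha})$. Both identities are tensorial, so the choice of frame is a matter of convenience.

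For the first (horizontal) Bochner formula I would apply Leibniz:
\[
\tfrac{1}{2}\Delta_H|d_H f|^2 = \sum_{i,\alpha,\beta} \bigl(\nabla_\beta\nabla_{\bb} + \nabla_{\bb}\nabla_\beta\bigr)\bigl(f^i_\alpha f^i_{\ba}\bigr).
\]
The purely quadratic piece collects into the pointwise norm $|\nabla_H d_H f|^2$; the remaining cubic terms are of shape $f^i_\alpha f^i_{\ba\beta\bb}$ and its conjugates. The heart of the proof is to commute the two inner covariant derivatives in each cubic term into the traced form $\nabla_\alpha\tau(f)^i$ modulo lower order corrections, using the Ricci identities of the Tanaka--Webster connection on $TM\otimes f^*TN$. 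Three types of commutators contribute: commuting two horizontal derivatives of opposite bidegree produces the Webster curvature $R^M_{\alpha\bb}$ and, through the structure equation $d\theta = 2\ii \theta^\alpha \wedge \theta^{\ba}$ (i.e.\ through the Tanaka--Webster torsion pairing $HM\times HM\to \mathbb{R}T$), a $2\ii\delta$-multiple of a Reeb derivative $f^i_{0\alpha}$ or $f^i_{0\ba}$; commuting a horizontal derivative past a $T$-direction index brings in the pseudo-Hermitian torsion through $A_{\alpha\beta}$ and $A_{\ba\bb}$; commuting two derivatives through $f^i$ contributes the target curvature term $f^j_\beta f^k_{\bb} f^l_\alpha R^N_{ijkl}$. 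After these substitutions the contracted third derivatives assemble into $\nabla_\alpha\tau(f)^i$, yielding the $\la\nabla_H\tau(f), d_H f\ra$ coupling, and symmetrising with the first Bianchi identity $R_{\ba\beta\lambda\bm} = R_{\ba\lambda\beta\bm}$ together with the algebraic symmetries of $R^N$ produces exactly the four coefficients displayed in the statement.

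For the second (Reeb-direction) Bochner formula I would expand $\tfrac{1}{2}\Delta_H(f^i_0)^2 = |\nabla_H f_0|^2 + f^i_0 \Delta_H f^i_0$, then push $\Delta_H$ past $\nabla_T$. The key commutator $[\nabla_\alpha,\nabla_T]$ (and its conjugate) on pullback sections is determined by the Tanaka--Webster torsion $T_{\nabla^M}(T,\cdot) = A$; iterating it on $f^i_0$ produces, besides the pullback curvature term $f^i_0 f^j_\alpha f^k_{\ba} f^l_0 R^N_{ijkl}$, precisely the torsion contributions $f^i_0 f^i_\beta A_{\bb\ba,\alpha}$, $f^i_0 f^i_{\bb} A_{\beta\alpha,\ba}$, $f^i_0 f^i_{\bb\ba} A_{\beta\alpha}$, and $f^i_0 f^i_{\beta\alpha} A_{\bb\ba}$ appearing in the statement, once the trace over $\alpha$ of $f^i_{0\alpha\ba} + f^i_{0\ba\alpha}$ is recognised as $\nabla_T\tau(f)^i$ plus commutator remainders already accounted for.

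The principal obstacle is the bookkeeping: one must track the $(1,0)$/$(0,1)$ bidegree of every index through each Ricci identity, pair correctly the domain corrections (Webster curvature $R^M$ and pseudo-Hermitian torsion $A$) with the codomain corrections ($R^N$), and symmetrise the quartic $R^N$ contributions so that the two displayed terms in the first formula (and the single one in the second) emerge with the correct combinatorial coefficients. Once the commutator table is assembled, the formulas follow by direct collection of terms, in the same style as the computations of Greenleaf and of the previous references cited in the statement.
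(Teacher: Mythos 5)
The paper itself offers no proof of this lemma: it is recalled verbatim from the cited references (Greenleaf; Chang--Chang; Ren et al.), so there is no in-paper argument to compare yours against. Your outline follows exactly the derivation used in those references --- expand $\Delta_H$ of the two energy densities by Leibniz, identify the quadratic piece as $|\nabla_H d_H f|^2$ (resp.\ $|\nabla_H f_0|^2$), and convert the traced third derivatives into $\la \nabla_H \tau(f), d_H f\ra$ (resp.\ $\la \nabla_T \tau(f), f_0\ra$) via the Ricci identities of the Tanaka--Webster connection on $TM\otimes f^*TN$, with the Webster curvature, the pseudo-Hermitian torsion, the Reeb-derivative terms and $R^N$ arising as commutator corrections. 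Structurally this is the right, and essentially the only, route.

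The gap is that your proposal stops precisely where the content of the lemma begins. A Bochner formula \emph{is} the explicit list of commutator corrections with their coefficients, and you never write down the commutation relations you invoke, nor verify that tracing them yields $4\ii(f^i_{\ba}f^i_{0\alpha}-f^i_{\alpha}f^i_{0\ba})$, the dimension-dependent coefficient $-2\ii(n-2)$ on the torsion terms, or the particular combination $f^i_0 f^i_\beta A_{\bb\ba,\alpha}+f^i_0 f^i_{\bb}A_{\beta\alpha,\ba}+f^i_0 f^i_{\bb\ba}A_{\beta\alpha}+f^i_0 f^i_{\beta\alpha}A_{\bb\ba}$ in the second formula; you list these terms by transcription from the statement, but the covariant derivatives of $A$ appearing there are never accounted for (they arise because the commutator $[\nabla_\alpha,\nabla_T]$ involves $A$ itself, so moving it past a second horizontal derivative differentiates $A$). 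To close the argument you must state the Ricci identities explicitly --- in particular $f^i_{\alpha\bb}-f^i_{\bb\alpha}=2\ii\,\delta_{\alpha\bb}f^i_0$ and $f^i_{0\alpha}-f^i_{\alpha 0}=f^i_{\bb}A_{\beta\alpha}$, both of which the paper records and uses later, together with their third-order versions involving $R^M$ and $R^N$ --- and then actually carry out the trace over $\alpha,\beta$. Without that computation the specific coefficients in the statement are asserted, not proved.
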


\begin{lemma} \label{at2}
Assume that $(N, h)$ has nonpositive sectional curvature. There exists a constant $C_1$ only depending on the bounds of the pseudo-Hermitian Ricci curvature and the pseudo-Hermitian torsion of $M$ such that for any $f \in C^\infty (M, \times (0, T_0), N) $ with $T_0 \in (0, \infty]$, we have
\begin{align}
(\Delta_H - \partial_t ) |d f|^2 \geq  - C_1 | df |^2 + |\nabla_H d_H f|^2 + |\nabla_H f_0|^2 + \la  \nabla (\tau  (f) - \partial_t f ), d f \ra.
\end{align}
In particular, if $f$ is a smooth function, then the above inequality is always true, i.e.
\begin{align}
(\Delta_H - \partial_t ) |d f|^2 \geq - C_1 | df |^2 + |\nabla_H d_H f|^2 + |\nabla_H f_0|^2 + \la  d ( \Delta_H f - \partial_t f ), d f \ra. \label{a9}
\end{align}
\end{lemma}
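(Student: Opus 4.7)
The plan is to derive this estimate by combining the two CR Bochner formulas from Lemma \ref{at4} and then absorbing all the error terms into the good terms $|\nabla_H d_H f|^2$ and $|\nabla_H f_0|^2$ plus a multiple of $|df|^2$.

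First I would write $|df|^2 = |d_H f|^2 + |f_0|^2$ (using that $\{\theta,\theta^\alpha,\theta^{\bar\alpha}\}$ is orthonormal) and apply $\Delta_H - \partial_t$ to each piece. Since $\partial_t$ commutes with $d_H$ and with $T$ on the map $f$, the parabolic terms produce exactly $\langle \nabla_H \partial_t f, d_H f\rangle$ and $\langle \nabla_T \partial_t f, f_0\rangle$, which combine with the $\langle \nabla_H \tau(f), d_H f\rangle$ and $\langle \nabla_T \tau(f), f_0\rangle$ terms coming from the Bochner identities to reassemble as $\langle \nabla(\tau(f) - \partial_t f), df\rangle$. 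So after summing I would land on an identity of the schematic form
\begin{align*}
\tfrac{1}{2}(\Delta_H - \partial_t)|df|^2 = |\nabla_H d_H f|^2 + |\nabla_H f_0|^2 + \langle \nabla(\tau(f) - \partial_t f), df\rangle + \mathcal{R} + \mathcal{T} + \mathcal{C},
\end{align*}
where $\mathcal{R}$ collects the $R^N$ curvature terms, $\mathcal{T}$ collects the pseudo-Hermitian Ricci and torsion terms, and $\mathcal{C}$ collects the mixed terms $4\ii(f^i_{\bar\alpha}f^i_{0\alpha} - f^i_\alpha f^i_{0\bar\alpha})$ from the first Bochner formula.

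Next I would dispatch each remainder. The target curvature pieces $\mathcal{R}$ (both the sectional-curvature tetrad in the horizontal formula and the corresponding one in the vertical formula) have the sign of $-\mathrm{Sect}^N$; since $(N,h)$ has nonpositive sectional curvature, these terms are $\geq 0$ and may simply be dropped. The pure $\mathcal{T}$ terms of the form $R^M_{\alpha\bar\beta} f^i_{\bar\alpha}f^i_\beta$ and $A_{\bar\beta\bar\alpha, \alpha} f^i_0 f^i_\beta$ are immediately bounded below by $-C\,|df|^2$ with $C$ depending only on the $L^\infty$ bounds of $R^M$, $A$ and $\nabla^M A$. The delicate pieces are (i) the mixed term $\mathcal{C}$, which couples $d_H f$ with $\nabla_H f_0$, and (ii) the torsion terms containing second derivatives of $f$, namely $f^i_0 f^i_{\bar\beta\bar\alpha} A_{\beta\alpha}$ and its conjugate. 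Both are handled by Cauchy--Schwarz with a small parameter $\varepsilon$:
\begin{align*}
|\mathcal{C}| &\leq \varepsilon |\nabla_H f_0|^2 + C_\varepsilon |d_H f|^2, \qquad
|f^i_0 f^i_{\bar\beta\bar\alpha} A_{\beta\alpha}| \leq \varepsilon |\nabla_H d_H f|^2 + C_\varepsilon |f_0|^2.
\end{align*}

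The main obstacle is choosing $\varepsilon$ correctly so that the absorbed portions leave a full (not just fractional) copy of $|\nabla_H d_H f|^2 + |\nabla_H f_0|^2$ on the right-hand side, matching the statement. Multiplying the identity by $2$ doubles the coefficients of the good terms to $2$, which gives room: pick $\varepsilon$ small enough (say $\varepsilon = 1/4$) so that after subtraction we still retain $\geq 1$ times each good term. All remaining constants then collect into a single $C_1$ depending only on bounds of the pseudo-Hermitian Ricci curvature, the pseudo-Hermitian torsion $A$, and $\nabla^M A$, giving the desired inequality. The final statement for smooth functions $f:M\to\mathbb{R}$ is an immediate specialization, since then $N=\mathbb{R}$ has zero sectional curvature and $\tau(f)=\Delta_H f$ after identifying $\nabla d f$ with $d(\Delta_H f)$ in the contracted term.
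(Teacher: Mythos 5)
Your proposal is correct and follows essentially the same route as the paper: sum the two CR Bochner formulas of Lemma \ref{at4}, observe that the target-curvature tetrads are nonnegative because $\mathrm{Sect}^N \leq 0$, and absorb the remaining mixed and torsion terms. In fact the paper's own proof only writes out the curvature-sign verification (reducing via $e_\alpha = \mathrm{Re}\, df(\eta_\alpha)$, $e_\alpha' = \mathrm{Im}\, df(\eta_\alpha)$) and leaves the Cauchy--Schwarz absorption of the $4\ii(f^i_{\ba}f^i_{0\alpha}-f^i_\alpha f^i_{0\ba})$ term and the second-derivative torsion terms implicit, so your write-up, which makes explicit that the factor of $2$ on the good terms leaves room for a full copy of $|\nabla_H d_H f|^2 + |\nabla_H f_0|^2$, is if anything more complete than the original.
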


\begin{proof}
It suffices to prove that
\begin{align}
f^i_{\ba} f^j_{\beta} f^k_{\bb} f^l_{\alpha} R^N_{ijkl} +f^i_{\alpha} f^j_{\beta} f^k_{\bb} f^l_{\ba} R^N_{ijkl} \geq 0 \label{a6}
\end{align}
and
\begin{align}
f^i_{0} f^j_{\alpha} f^k_{\ba} f^l_{0} R^N_{ijkl} \geq 0. \label{a7}
\end{align}
Set $e_\alpha = Re \; df(\eta_\alpha)$ and $ e_{\alpha}' = Im  \; df(\eta_\alpha) $. Then
\begin{align*}
& f^i_{\ba} f^j_{\beta} f^k_{\bb} f^l_{\alpha} R^N_{ijkl} +f^i_{\alpha} f^j_{\beta} f^k_{\bb} f^l_{\ba} R^N_{ijkl} \\
&= \ \langle  R^N( df(\eta_{\bar{\beta}}) , df(\eta_\alpha) ) df(\eta_\beta), df(\eta_{\bar{\alpha}})\rangle
 +  \langle R^N( df(\eta_{\bb}), df(\eta_{\ba}) ) df(\eta_\beta), df(\eta_\alpha)\rangle \nonumber \\
&=   -2( \langle R^N( e_\alpha, e_\beta ) e_\beta, e_\alpha\rangle  + \langle R^N( e_\alpha, e_{\beta}' ) e_{\beta}', e_\alpha\rangle + \langle R^N( e_{\alpha}', e_\beta ) e_\beta, e_{\alpha}'\rangle \nonumber  \\
& \quad \quad \quad +\langle R^N( e_{\alpha}', e_{\beta}' ) e_{\beta}', e_{\alpha}'\rangle  )  \nonumber \\
&\geq 0.
\end{align*}
Hence we obtain \eqref{a6}. Similarly, to get \eqref{a7}, we calculate that
\begin{align*}
f^i_{0} f^j_{\alpha} f^k_{\ba} f^l_{0} R^N_{ijkl}  = & \langle R^N ( df(\eta_{\ba}), df(T) ) df(\eta_\alpha) , df(T)\rangle  \nonumber\\
= & -(\langle R^N (e_\alpha,e_0)e_0,e_\alpha\rangle  + \langle R^N (e_{\alpha}',e_0)e_0,e_{\alpha}'\rangle ) \nonumber \\
\geq & 0  .
\end{align*}
\end{proof}

\section{Regularity} \label{s3}
In this section, by $L^p$ theory of hypoelliptic operators, we could lift the regularity of the solution of the subelliptic parabolic system. The main theorem of this section is as follows.

\begin{theorem} \label{ct1}
Suppose that $(M, HM, J_b, \theta)$ is a closed pseudo-Hermitian manifold and $(N, h)$ is a compact Riemannian manifold which is isometrically embedded by $\iota$ in $(\mathbb{R}^K, g_{can})$. Let $B(N)$ be the tubular neighborhood of $\iota (N)$ and $P: B(N) \rightarrow \iota (N) $ the closest point projection map. Assume that $u \in C^0 (M \times (T_1, T_2), B(N) )$ for some $0 \leq T_1 < T_2 \leq +\infty $ satisfies is the weak solution of
\begin{align}
( \Delta_H - \partial_t  ) u^a = P^a_{bc} (u) \la \nabla_H u^b , \nabla_H u^c \ra.
\end{align}
If $\nabla_H u^a$ is uniformly bounded in any compact open set,
then $u \in C^\infty (M \times (T_1, T_2), B(N) )$. Moreover, if $V \Subset U \Subset M \times (T_1 , T_2)$, then for any positive integer $\gamma$
\begin{align}
\sum_a || u^a ||_{C^\gamma (V)} \leq C_{\gamma, U, V} \left( \sum_a || u^a ||_{C^0 (U)} + \sum_a || \nabla_H u^a ||_{C^0 (U)}   \right). \label{c2}
\end{align}
where $C_{\gamma, U, V}$ depends on $\gamma$, $U$ and $V$.
\end{theorem}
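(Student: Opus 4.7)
The plan is to combine the subelliptic $L^p$ theory of Folland--Stein with parabolic Schauder estimates for the operator $\Delta_H - \partial_t$, and to bootstrap. Since the horizontal bundle on any pseudo-Hermitian manifold already satisfies H\"ormander's condition at step two (a single bracket $[\eta_\alpha, \eta_{\bar\alpha}]$ produces the Reeb field $T$), both kinds of estimates apply. The first move is to treat the right-hand side as a forcing term: writing
\begin{align*}
F^a := P^a_{bc}(u) \la \nabla_H u^b, \nabla_H u^c \ra,
\end{align*}
the hypothesis $u, \nabla_H u \in L^\infty_{\mathrm{loc}}$ together with the smoothness of $P$ on $B(N)$ yields $F^a \in L^\infty_{\mathrm{loc}}$, so we may view the equation as a linear subelliptic parabolic equation for $u^a$ with a controlled inhomogeneity at every stage of the argument.

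The first bootstrap step uses the subelliptic parabolic $L^p$ estimate: on nested $V \Subset W \Subset U$ and for any $p < \infty$,
\begin{align*}
\sum_a \| u^a \|_{W^{2,p}_H(V)} \leq C \Big( \sum_a \| u^a \|_{L^p(W)} + \sum_a \| F^a \|_{L^p(W)} \Big),
\end{align*}
where $W^{2,p}_H$ is the horizontal Sobolev space controlling all horizontal second derivatives together with $\partial_t$. Choosing $p$ larger than the parabolic homogeneous dimension $Q+2 = 2m+4$ and invoking the Folland--Stein embedding into the Carnot--Carath\'eodory H\"older class promotes $u^a$ to $C^{1,\alpha}_H(V)$ for some $\alpha \in (0,1)$. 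In particular $\nabla_H u^a \in C^\alpha_H$, and the chain rule upgrades $F^a$ to $C^\alpha_H$.

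From this point one iterates the subelliptic parabolic Schauder estimate: each application lifts $F \in C^{k,\alpha}_H$ to $u \in C^{k+2,\alpha}_H$, and the chain rule then upgrades $F$ correspondingly, so horizontal and time derivatives of $u$ of all orders exist. Full smoothness across the Reeb direction is automatic from the structure equation $[\eta_\alpha, \eta_{\bar\alpha}] = -2 \ii T$ modulo horizontal terms, so $Tu$ becomes a smooth combination of horizontal second derivatives and is therefore smooth itself. The quantitative bound \eqref{c2} is obtained by running the iteration on a finite chain $V = V_\gamma \Subset V_{\gamma-1} \Subset \cdots \Subset V_0 = U$ and tracking constants; at every stage the constants depend only on $\| u^a \|_{C^0(U)}$ and $\| \nabla_H u^a \|_{C^0(U)}$, because $F$ is controlled by these quantities via the smoothness of $P$.

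The main technical obstacle is bookkeeping of the correct function spaces: the hypoelliptic Sobolev and H\"older classes are defined via the parabolic CC metric, and one must verify that smooth composition and pointwise multiplication preserve these classes (so that $F^a$ is well-behaved after each upgrade), and that at each inductive step only horizontal and time derivatives appear on the right-hand side of the equation. Once this framework is in place, the nonlinear bootstrap itself is routine.
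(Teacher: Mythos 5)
Your proposal is correct in outline, but the iteration stage takes a genuinely different route from the paper. Both arguments begin the same way: the pointwise bound on $\nabla_H u$ puts the nonlinearity $F^a = P^a_{bc}(u)\la \nabla_H u^b, \nabla_H u^c\ra$ in $L^p_{\mathrm{loc}}$ for every finite $p$, and a first application of the subelliptic parabolic $L^p$ estimate (the paper's Theorem \ref{ct2}, due to Rothschild--Stein) gains two orders of anisotropic differentiability. After that you switch to a Schauder iteration in parabolic Carnot--Carath\'eodory H\"older classes, lifting $F \in C^{k,\alpha}_H$ to $u \in C^{k+2,\alpha}_H$ at each step; the paper instead stays entirely inside the Folland--Stein $L^p$ scale, differentiating $F^a$ by the product rule and using H\"older's inequality, which costs integrability at every stage (the exponent degrades to $p_k = p/(k+1)!$) but is harmless because $p$ may be taken arbitrarily large at the outset, and then converts $S^{p_k}_{k+1}$ regularity into classical $C^\gamma$ regularity only at the very end via the embedding Theorem \ref{ct3}. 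Your route avoids the factorial loss of integrability and is arguably the more standard bootstrap, but it imports a nontrivial piece of machinery --- Schauder estimates for the hypoelliptic heat operator in anisotropic H\"older spaces, together with the stability of those spaces under composition and multiplication --- that the paper never needs and does not cite; the paper's argument runs on exactly two external inputs (the $S^p_k$ a priori estimate and the embedding into classical Sobolev/H\"older spaces). Your remark that regularity in the Reeb direction comes for free from the bracket relation is correct and is precisely what the embedding $S^p_\mu \subset L^p_{\mu/2}$ encodes in the paper; and your tracking of constants through nested domains reproduces the paper's derivation of the estimate \eqref{c2} from \eqref{c3}.
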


We firstly recall the $L^p $ theory of hypoelliptic operators (cf. \cite{rothschild1976hypoelliptic}). Let $\Omega $ be a smooth manifold and $Y_0, Y_1, \dots, Y_k$ be real smooth vector fields on $\Omega$. Define the operator
\begin{align*}
\mathcal{L} = \sum_{i=1}^k Y_i^2 + Y_0.
\end{align*}

\begin{theorem}[Theorem 1.1 in \cite{hormander1967hypoelliptic}]
Suppose the system $Y_0, Y_1, \dots, Y_k$, together with their commutators of some finite order, span the tangent space at any point of $\Omega$. Then $\mathcal{L}$ is hypoelliptic, that is if $\mathcal{L} f = g$ and $g \in C^\infty (\Omega)$, then $f \in C^\infty (\Omega)$.
\end{theorem}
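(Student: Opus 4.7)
The plan is to prove H\"ormander's hypoellipticity theorem by establishing a subelliptic a priori estimate and bootstrapping it along the Sobolev scale. Since hypoellipticity is a local property, I would fix an arbitrary point $p \in \Omega$, work in a coordinate chart $U$ around $p$, and identify functions supported in $U$ with functions on $\mathbb{R}^n$ via cut-offs. I would use the standard $L^2$ Sobolev norms $\| \cdot \|_s$ defined via the Fourier transform, together with pseudo-differential operators $\Lambda^s = (1-\Delta)^{s/2}$.

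The core of the argument is the subelliptic estimate: there exist $\varepsilon > 0$ and $C > 0$, depending on the number of iterated brackets needed to span $T_p \Omega$, such that
\begin{align*}
\| \phi \|_{\varepsilon}^2 \leq C \bigl( | \la \mathcal{L} \phi, \phi \ra | + \| \phi \|_0^2 \bigr), \quad \phi \in C_c^\infty (U).
\end{align*}
The starting point is the identity $- \la \mathcal{L} \phi, \phi \ra = \sum_{i=1}^k \| Y_i \phi \|_0^2 + (\text{lower order})$, which controls first-order derivatives along the $Y_i$. To gain control along a commutator $[Y_i, Y_j]$, one writes $\la [Y_i, Y_j] \phi, \phi \ra = \la Y_j \phi, Y_i^* \phi \ra - \la Y_i \phi, Y_j^* \phi \ra$ and interpolates in the Sobolev scale; iterating this reasoning through brackets of length up to $r$ yields the estimate with $\varepsilon = 2^{-r}$. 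The drift term $Y_0$ must be handled separately since it is only first order, and its contribution to commutators must be absorbed using interpolation between $\| \phi \|_0$ and $\| \phi \|_{\varepsilon}$.

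Once the basic estimate is established, I would upgrade it to every Sobolev level by conjugating with $\Lambda^s$:
\begin{align*}
\| \phi \|_{s + \varepsilon} \leq C_s \bigl( \| \mathcal{L} \phi \|_s + \| \phi \|_s \bigr),
\end{align*}
controlling the commutators $[\mathcal{L}, \Lambda^s]$ by pseudo-differential calculus (they are of order $s+1$, but can be reabsorbed into the right-hand side via interpolation). To pass from this a priori estimate to actual regularity of a distributional solution $f$ with $\mathcal{L} f = g \in C^\infty$, I would apply Friedrichs mollifiers $J_\delta$: if $f \in H^s_{\mathrm{loc}}$, then $J_\delta f$ is smooth, and the estimate applied to $\chi J_\delta f$ (with a suitable cut-off $\chi$) together with uniform $L^2$ bounds on $[\mathcal{L}, \chi J_\delta] f$ yields $\chi f \in H^{s + \varepsilon}$. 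Iterating in steps of size $\varepsilon$ places $f$ in $H^s_{\mathrm{loc}}$ for every $s$, hence $f \in C^\infty (\Omega)$ by Sobolev embedding.

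The main obstacle is the subelliptic estimate, specifically the uniform bookkeeping of iterated commutator terms through the induction on bracket length $r$; each iteration halves the regularity gain, and one must carefully interpolate to absorb error terms into the left-hand side without losing the positivity coming from $\sum \| Y_i \phi \|_0^2$. Handling the drift vector field $Y_0$ symmetrically, despite its being first order rather than second, is the subtlest point and is the reason the exponent $\varepsilon$ must be taken strictly less than $1$.
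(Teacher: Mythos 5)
The paper does not prove this statement at all: it is imported verbatim as Theorem 1.1 of H\"ormander's 1967 paper, cited and then used as a black box (the only thing the paper does with it is verify, in Example \ref{ct4}, that $\Delta_H$ and $\Delta_H - \partial_t$ satisfy the bracket-generating hypothesis). So there is no in-paper argument to compare against; what you have written is an outline of the standard proof from the cited literature, essentially Kohn's streamlined version of H\"ormander's argument via subelliptic estimates, $\Lambda^s$-conjugation, and Friedrichs mollification. As an outline it is structurally correct and is the right proof to have in mind.

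Two points in your sketch are stated too loosely to pass as a proof, and they are exactly the places where the real work lives. First, the bilinear identity $\la [Y_i, Y_j]\phi, \phi\ra = \la Y_j\phi, Y_i^*\phi\ra - \la Y_i\phi, Y_j^*\phi\ra$ by itself gives no Sobolev gain; the actual step is to bound $\| [Y_i,Y_j]\phi \|_{\varepsilon - 1}$ by pairing $[Y_i,Y_j]\phi$ against $\Lambda^{2\varepsilon-2}[Y_i,Y_j]\phi$, moving one vector field across the inner product, and controlling the resulting pseudo-differential commutators; the halving of $\varepsilon$ at each bracket length comes from this pairing, not from the identity you quote. Second, your diagnosis of why $Y_0$ is delicate is off: the issue is not that $Y_0$ is first order while the $Y_i^2$ are second order, but that $Y_0$ is only controlled through $\mathcal{L}$ itself via $\la Y_0\phi,\phi\ra = \la \mathcal{L}\phi,\phi\ra + \sum_i \|Y_i\phi\|_0^2 + (\text{l.o.t.})$, so in the induction on iterated brackets $Y_0$ must be assigned weight two rather than weight one, which degrades the exponent $\varepsilon$ accordingly. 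Also note that $\varepsilon < 1$ already in the pure bracket case $r \geq 2$; it has nothing specifically to do with the drift. Finally, the mollifier step requires the Friedrichs lemma to get bounds on $[\mathcal{L}, \chi J_\delta]f$ uniform in $\delta$; you assert this but it is a lemma, not a triviality. None of this makes the outline wrong, but for the purposes of this paper the theorem is correctly treated as a citation rather than something to reprove.
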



\begin{example} \label{ct4}
The sub-Laplace operator $\Delta_H$ of a pseudo-Hermitian manifold $M^{2m+1}$ is hypoelliptic. Locally one can choose a horizontal orthonormal real frame $\{ X_\alpha, X_{m+\alpha} \}_{\alpha =1 }^{m}$ defined on a domain $\Omega \subset M$. Then
\begin{align}
\Delta_H = \sum_{\alpha=1}^m X_{\alpha}^2 + \sum_{\alpha=1}^m X_{m+ \alpha}^2 + \sum_{\alpha=1}^m (\nabla_{X_{\alpha}} X_\alpha + \nabla_{X_{m+ \alpha}} X_{m + \alpha}), \label{c1}
\end{align}
where $\nabla$ is the Tanaka-Webster connection. On one hand, $[HM, HM] = TM$ is from the positivity of Levi form. On the other hand, since the Webster connection preserves the horizontal bundle, the last term of \eqref{c1} is horizontal. Hence $\Delta_H$ is hypoelliptic. Thus the compatible Sobolev space is
\begin{align*}
S^p_k (\Delta_H, \Omega) = \big\{ \psi \in L^p (\Omega) \big| X_{i_1} \dots X_{i_s} \psi \in L^p (\Omega), \  s \leq k \mbox{ and } X_{i_j} \in \{ X_\alpha, X_{m+\alpha} \}_{\alpha=1}^m \big\}
\end{align*}
which is called Folland-Stein space (cf. \cite{folland1974estimates}).

The subelliptic parabolic operator
\begin{align*}
\mathcal{L}_t = \Delta_H - \partial_t
\end{align*}
is also hypoelliptic and its compatible Sobolev space is defined as follows (cf. \cite{rothschild1976hypoelliptic}): for $W \Subset \Omega \times (0, + \infty)$,
\begin{align*}
S^p_k (\mathcal{L}_t, W) = \big\{ \psi \in L^p (\Omega) \big| \partial_t^i X_{i_1} \dots X_{i_s} \psi \in L^p (W), \  2i + s \leq k \big\}.
\end{align*}
By partition of unity, we could define $S^p_k (\Delta_H, M)$ and $S^p_k (\mathcal{L}_t, M \times (T_1, T_2))$.
\end{example}

\begin{theorem}[Theorem 18 in \cite{rothschild1976hypoelliptic}] \label{ct2}
Let $\Omega$ be a region of a pseudo-Hermitian manifold and $\mathcal{L}_t = \Delta_H - \partial_t$. Suppose $f \in L^p (\Omega)$ and $\mathcal{L}_t f =g$. If $g \in S^p_k (\mathcal{L}_t , \Omega)$ for some $p >1$ and positive integer $k $, then $\chi f \in S^{p}_{k+2} (\mathcal{L}_t, \Omega)$ for each $\chi \in C^\infty_0 (\Omega) $. Moreover, there exists a constant $C_{\Omega'}$ independent of $f$ and $g$ such that
\begin{align}
|| f ||_{S^{p}_{k+2} (\mathcal{L}_t, \Omega')} \leq C_{\Omega'} ( ||f||_{L^p (\Omega)} + ||g||_{S^{p}_{k} (\mathcal{L}_t, \Omega)} ) \label{c3}
\end{align}
where $\Omega' \Subset \{ \chi=1 \}$.
\end{theorem}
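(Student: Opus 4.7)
The plan is to bootstrap regularity by iterating Theorem~\ref{ct2} on a nested sequence of opens $V = V_0 \Subset V_1 \Subset \cdots \Subset V_\gamma \Subset U$, treating the quasilinear right-hand side $g^a := P^a_{bc}(u)\la \nabla_H u^b, \nabla_H u^c\ra$ as a forcing term whose regularity improves at each step. The starting point is the observation that, under the standing hypothesis, $u$ takes values in the compact set $\overline{B(N)}$ and $|\nabla_H u|$ is bounded on $U$, so smoothness of $P^a_{bc}$ gives $g^a \in L^\infty(U) \subset L^p(U)$ for every $p < \infty$, with an $L^p$-norm controlled by $|U|$, $\sup|P^a_{bc}|$, and $\|\nabla_H u\|_{C^0(U)}^2$. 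One application of \eqref{c3} with $k = 0$ then places $u \in S^p_2(\mathcal{L}_t, V_\gamma)$ with a quantitative bound in $\|u\|_{C^0(U)} + \|\nabla_H u\|_{C^0(U)}$.

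From here I will proceed by upward induction on $k$: once $u \in S^p_{k+2}(V_{\gamma-k})$, I will show $g^a \in S^p_{k+1}(V_{\gamma-k})$, so Theorem~\ref{ct2} lifts $u$ to $S^p_{k+3}(V_{\gamma-k-1})$. The inductive step depends on two analytic facts about the anisotropic parabolic Folland-Stein spaces: a Leibniz rule $\|\phi \psi\|_{S^p_j} \leq C(\|\phi\|_{S^p_j}\|\psi\|_{L^\infty} + \|\phi\|_{L^\infty}\|\psi\|_{S^p_j})$ for $jp$ above the homogeneous dimension, and a chain rule $\|F(u)\|_{S^p_j} \leq C_F(1 + \|u\|_{S^p_j})^{N_j}$ for $F$ smooth on a neighborhood of the image. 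With these in hand, $\nabla_H u^b \in S^p_{k+1}$, the bounded product $\la \nabla_H u^b, \nabla_H u^c\ra$ stays in $S^p_{k+1}$, and $P^a_{bc}(u) \in S^p_{k+2}$, so $g^a$ lies in $S^p_{k+1}$ with norm polynomial in the previous data.

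After $\gamma$ iterations one has $u \in S^p_\ell(\mathcal{L}_t, V)$ for arbitrarily large $\ell$, and a parabolic Folland-Stein embedding into H\"older spaces (with the homogeneous dimension $Q = 2m + 4$ of the parabolic sub-Riemannian scaling governing the Sobolev threshold) converts this into $u \in C^\gamma(V)$. The quantitative estimate \eqref{c2} then comes out by composing the $\gamma$-many applications of \eqref{c3} with the embedding: every constant depends only on the chosen nested domains, on $p$, and polynomially on $\|u\|_{C^0(U)} + \|\nabla_H u\|_{C^0(U)}$. The hard part is proving the Leibniz and chain rules in $S^p_k(\mathcal{L}_t, \cdot)$, since that space mixes horizontal derivatives with time derivatives carrying double the parabolic weight; these can be established from the fundamental-solution estimates of \cite{rothschild1976hypoelliptic} combined with paraproduct or approximation arguments, but require careful bookkeeping so that the constants picked up at each of the $\gamma$ nesting steps combine into a single $C_{\gamma, U, V}$ with the dependence claimed.
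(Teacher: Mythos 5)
Your proposal does not prove the statement in question. The statement is Theorem~\ref{ct2} itself --- the a priori interior estimate $\| f \|_{S^p_{k+2}(\mathcal{L}_t,\Omega')} \leq C_{\Omega'}(\|f\|_{L^p(\Omega)} + \|g\|_{S^p_k(\mathcal{L}_t,\Omega)})$ for the hypoelliptic operator $\mathcal{L}_t = \Delta_H - \partial_t$ --- and your argument opens by ``iterating Theorem~\ref{ct2}'' as a black box. That is circular: you cannot establish the subelliptic $L^p$ gain of two derivatives by invoking the subelliptic $L^p$ gain of two derivatives. What you have actually written is a proof of Theorem~\ref{ct1} (the smoothness and $C^\gamma$ bound \eqref{c2} for solutions of the quasilinear system), and indeed your bootstrap --- viewing $P^a_{bc}(u)\langle \nabla_H u^b, \nabla_H u^c\rangle$ as a forcing term of improving regularity on a nested family of domains, then concluding via the Folland--Stein embedding --- is essentially the argument the paper gives for that theorem, modulo the bookkeeping of the integrability exponents $p_k = p/(k+1)!$ that the paper tracks explicitly.

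For the statement actually at issue, the paper offers no proof: it is quoted verbatim as Theorem 18 of Rothschild--Stein \cite{rothschild1976hypoelliptic}, whose proof rests on the lifting of the vector fields to a free nilpotent group, the construction of a parametrix from the fundamental solution of the model operator on that group, and singular-integral estimates on spaces of homogeneous type. None of that machinery appears in your proposal. If your intent was to prove Theorem~\ref{ct1}, the plan is sound (and the Leibniz/chain-rule issues you flag can be sidestepped, as the paper does, by differentiating the right-hand side term by term and paying a factor in the exponent $p$ at each step); but as a proof of Theorem~\ref{ct2} it has no content.
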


\begin{theorem}[Theorem 19.2 in \cite{folland1974estimates} and Theorem 13 in \cite{rothschild1976hypoelliptic}] \label{ct3}
Suppose $(M, HM, J_b, \theta)$ is a closed pseudo-Hermitian manifold. Then 
$
S^p_\mu (\Delta_H, M) \subset L^{p}_{\mu/2} (M)
$
which is the classical Sobolev space of $M$. Moreover, for any $k \in \mathbb{N}, \alpha \in (0,1)$ and $1< p  < \infty$, there exists a sufficiently large positive integer $\mu$ such that
\begin{align*}
S^p_\mu (\mathcal{L}_t , M \times (T_1, T_2) ) \subset L^{p}_{\mu/2} (M \times (T_1, T_2)) \subset C^{k, \alpha} (M \times (T_1, T_2))
\end{align*}
where $\mathcal{L}_t = \Delta_H - \partial_t$ and $T_1 < T_2$.
\end{theorem}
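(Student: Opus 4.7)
The plan is to establish the two inclusions in sequence, with the core input being the step-2 bracket generating structure of the horizontal bundle. By the nondegeneracy of the Levi form, working in the local horizontal orthonormal frame $\{X_\alpha, X_{m+\alpha}\}$ of Example \ref{ct4}, the structural equation $d\theta = 2\ii\, \theta^\alpha \wedge \theta^{\ba}$ translates into commutator relations
\begin{align*}
[X_\alpha, X_{m+\alpha}] = c\, T + \text{(horizontal vector field)}
\end{align*}
for some nonzero constant $c$. Hence $T$, and therefore any full tangent vector field on $M$, can be recovered from a single commutator of two horizontal vector fields up to horizontal first-order corrections.

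For the first inclusion $S^p_\mu(\Delta_H, M) \subset L^p_{\mu/2}(M)$, I would argue by induction on $\mu$. In a local chart, decompose a classical derivative $\partial_{x^i}$ into its horizontal component (absorbed by one $X_j$-derivative) plus a scalar multiple of $T$; the $T$-derivative is rewritten as $T\psi = c^{-1}(X_\alpha X_{m+\alpha}\psi - X_{m+\alpha} X_\alpha \psi) + \text{(horizontal first-order terms in }\psi\text{)}$. Thus a single classical derivative is controlled in $L^p$ by two horizontal derivatives, and iterating this substitution converts $\mu/2$ classical derivatives into at most $\mu$ horizontal derivatives plus lower-order remainders. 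A finite partition of unity subordinate to a horizontal-frame atlas globalizes the inclusion along with the quantitative norm estimate.

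For the parabolic case, the Folland--Stein norm controls mixed derivatives $\partial_t^i X_{i_1}\cdots X_{i_s}\psi$ with $2i + s \leq \mu$. A classical derivative on $M \times (T_1, T_2)$ of total order $k$ decomposes as a composition $\partial_t^i D^j$ with $i + j = k$, where $D^j$ is a classical spatial derivative of order $j$; the spatial argument above controls $D^j\psi$ by $2j$ horizontal derivatives, while $\partial_t^i$ already sits inside the parabolic Folland--Stein norm with weight $2i$. The total weight is $2(i+j) = 2k$, so classical derivatives of order $k$ on $M \times (T_1, T_2)$ are controlled by $S^p_{2k}(\mathcal{L}_t, M \times (T_1, T_2))$, giving $S^p_\mu(\mathcal{L}_t, M \times (T_1, T_2)) \subset L^p_{\mu/2}(M \times (T_1, T_2))$. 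The final inclusion into $C^{k,\alpha}(M \times (T_1, T_2))$ is the standard Morrey embedding on the $(2m+2)$-dimensional manifold, valid once $\mu/2 - (2m+2)/p > k + \alpha$; choosing $\mu$ sufficiently large (depending on $k,\alpha,p$) achieves this.

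The main obstacle I anticipate is the bookkeeping of lower-order commutator terms accumulated in the induction: each commutator $[X_i, X_j]$ produces, in addition to the desired $T$ contribution, extra horizontal first-order terms coming from the Tanaka--Webster connection coefficients, and these proliferate combinatorially as one considers higher mixed derivatives of the form $\partial_t^i X_{i_1}\cdots X_{i_s}\psi$. Ensuring that all such remainders can be absorbed into the Folland--Stein norm at the correct level, without losing orders and with constants depending only on the geometry of $M$, is the technically delicate heart of the argument — and is precisely what the Folland--Stein / Rothschild--Stein machinery achieves in the references cited for this theorem.
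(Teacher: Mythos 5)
Your argument is essentially sound as a roadmap, but it is worth noting that the paper does not prove this statement at all: the first inclusion is quoted verbatim from Theorem 19.2 of Folland--Stein and Theorem 13 of Rothschild--Stein, and the only thing the authors add is the remark that the second chain of inclusions is ``trivial from the first one,'' i.e.\ it follows by combining the quoted embedding with the classical Sobolev--Morrey embedding $L^p_{\mu/2}\subset C^{k,\alpha}$ on the $(2m+2)$-dimensional space-time, exactly as in your last step. What you have written is a reconstruction of the mechanism \emph{behind} the cited theorems: the step-two bracket-generating relation $[X_\alpha, X_{m+\alpha}]=cT+(\text{horizontal})$, with $c\neq 0$ coming from the nondegeneracy of $L_\theta$ and constant because the frame is $L_\theta$-orthonormal, so that one classical derivative (equivalently one $T$-derivative) is purchased at the price of two horizontal derivatives; this is indeed why the subelliptic order $\mu$ corresponds to classical order $\mu/2$, and your parabolic bookkeeping $2i+2j\le\mu$ for $\partial_t^i D^j$ is consistent with the weight $2$ assigned to $\partial_t$ in $S^p_k(\mathcal{L}_t,\cdot)$. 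The one place where your proposal is genuinely only a sketch is the point you yourself flag: in the higher-order induction one must check that every remainder generated by commuting $T$ and the $X_i$ past each other (all such commutators are horizontal first-order operators, since the Tanaka--Webster connection preserves $HM$) lands at a level already controlled by the Folland--Stein norm, with uniform constants. That absorption argument is the actual content of the cited theorems, so your write-up should either carry it out or, as the paper does, simply cite Folland--Stein and Rothschild--Stein and reserve the proof for the elementary second half.
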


The proof of the second conclusion of Theorem \ref{ct3} is trivial from the first one.
Now we can demonstrate Theorem \ref{ct1}.

\begin{proof}[Proof of Theorem \ref{ct1}]
For any point $ (q_1, t_1) \in  M \times (T_1, T_2)$, we can choose exhaustion open sets $\{ U_j \}$:
\begin{align*}
M \times (T_1, T_2) \Supset U_0 \Supset  U_1 \Supset U_2 \Supset \cdots \ni  (q_1, t_1).
\end{align*}
By the assumption, $ \nabla_H u^b  \in L^p ( U_0 )$ for any $1< p < \infty$, and thus
\begin{align*}
 P^a_{bc} (u) \la \nabla_H u^b , \nabla_H u^c \ra \in L^{p_1} ( U_0 ).
\end{align*}
for $p_1= \frac{p}{2}$.
Hence by Theorem \ref{ct2}, we have
\begin{align*}
\chi_1 u^a \in S^{p_1}_2 (\mathcal{L}_t, U_0) \Rightarrow u^a \in S^{p_1}_2 (\mathcal{L}_t, U_1)
\end{align*}
for some $\chi_1 \in C^\infty_0 ( M \times (T_1, T_2) )$ and $\chi_1 |_{U_1} \equiv 1$. Since
\begin{align*}
\big( P^a_{bc} u^b_\alpha u^c_{\ba} \big)_{\beta} = P^a_{bcd} u^d_{\beta} u^b_\alpha u^c_{\ba} + P^a_{bc} u^b_{\alpha \beta} u^c_{\ba} + P^a_{bc} u^b_\alpha u^c_{\ba \beta}
\end{align*}
then
\begin{align*}
 P^a_{bc} (u) \la \nabla_H u^b , \nabla_H u^c \ra \in S_1^{p_2} (\mathcal{L}_t, U_1).
\end{align*}
for  $p_2= \frac{p_1}{3}= \frac{p}{3 !}$.
Again by Theorem \ref{ct2},
\begin{align*}
\chi_2 u^a  \in S^{p_2}_3 (\mathcal{L}_t, U_1) \Rightarrow u^a \in S^{p_2}_3 (\mathcal{L}_t, U_2)
\end{align*}
for some $\chi_2 \in C^\infty_0 ( M \times (T_1, T_2) )$ and $\chi_2 |_{U_2} \equiv 1$.
By induction,
we have $u^a \in S^{p_k}_{k+1} (\mathcal{L}_t, U_k)$ if $p_k= \frac{ p}{( k+1) !} >1$. Theorem \ref{ct3} guarantees that for any $\gamma \in \mathbb{N}$, we can choose sufficiently large $p$ and $j$ such that $u \in C^{\gamma} (U_j)$. Hence $u$ is smooth near $(q_1, t_1)$. The estimate \eqref{c2} is due to \eqref{c3}.
\end{proof}

A similar argument show that the weak solution of the subelliptic system \eqref{a8} is also smooth. Precisely,

\begin{theorem}
Suppose that $(M, HM, J_b, \theta)$ is a closed pseudo-Hermitian manifold and $(N, h)$ is a compact Riemannian manifold. Assume that $f \in C^0 (M, N)$ is a weak pseudo-harmonic map which means $ u = \iota \circ f$ is a weak solution of \eqref{a8}. If $d_H f \in L^p (M)$ for all $1 <p < +\infty$, then $f \in C^\infty (M, N)$ and thus it is a pseudo-harmonic map.
\end{theorem}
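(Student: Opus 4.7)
The plan is to mimic the proof of Theorem \ref{ct1}, replacing the subelliptic parabolic operator $\mathcal{L}_t = \Delta_H - \partial_t$ with its elliptic counterpart $\Delta_H$. The sub-Laplacian $\Delta_H$ is hypoelliptic by Example \ref{ct4}, and the results of Rothschild--Stein \cite{rothschild1976hypoelliptic} and Folland--Stein \cite{folland1974estimates} yield both an analogous $L^p$-gain-of-two-derivatives estimate
\[
\|\phi\|_{S^p_{k+2}(\Delta_H, \Omega')} \le C\bigl(\|\phi\|_{L^p(\Omega)} + \|\Delta_H \phi\|_{S^p_k(\Delta_H, \Omega)}\bigr),
\]
and an embedding $S^p_\mu(\Delta_H, M) \subset L^p_{\mu/2}(M) \subset C^{k,\alpha}(M)$ once $\mu$ is large enough. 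So the strategy is a bootstrap in Folland--Stein spaces, starting from the hypotheses $u \in C^0(M, \iota(N))$ (hence $u \in L^\infty$) and $\nabla_H u^a \in L^p(M)$ for every $p < \infty$.

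For the first step, the hypothesis gives $\langle \nabla_H u^b, \nabla_H u^c \rangle \in L^q(M)$ for every $q$, and since $P$ is smooth on the tubular neighborhood and $u$ is continuous, the right-hand side $g^a := P^a_{bc}(u)\langle \nabla_H u^b, \nabla_H u^c \rangle$ lies in $L^q(M)$ for every $q$. Applying the elliptic regularity estimate gives $u^a \in S^q_2(\Delta_H, M)$ for every $q$. For the inductive step, assume $u^a \in S^q_k(\Delta_H, M)$ for all $q \in (1,\infty)$. Horizontal differentiation, together with the chain and product rules, shows
\[
X_{i_1} \cdots X_{i_{k-1}}\bigl(P^a_{bc}(u)\langle \nabla_H u^b, \nabla_H u^c \rangle\bigr)
\]
is a polynomial in derivatives of $P$ evaluated at $u$ and in horizontal derivatives $X_{j_1}\cdots X_{j_s} u^b$ with $s \le k$; each such term belongs to $L^{q'}$ for every $q' < \infty$ by the Hölder inequality in Folland--Stein spaces, so $g^a \in S^{q'}_{k-1}(\Delta_H, M)$. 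Another application of the hypoelliptic estimate then gives $u^a \in S^{q'}_{k+1}(\Delta_H, M)$.

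Iterating, $u^a \in S^p_k(\Delta_H, M)$ for every $k$ and $p$, so by the Folland--Stein embedding $u \in C^\infty(M, \mathbb{R}^K)$. Since $f$ is continuous and takes values in $N$, one has $u = \iota \circ f$ with image in $\iota(N)$, and the composition law shows $f = \iota^{-1} \circ u \in C^\infty(M, N)$; equation \eqref{a4} then identifies $\tau(f) = 0$, so $f$ is pseudo-harmonic in the classical sense.

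The main obstacle, and the only place where care is needed, is the bookkeeping of the bootstrap in Folland--Stein spaces: one has to confirm that the nonlinearity $P^a_{bc}(u)\langle \nabla_H u^b, \nabla_H u^c \rangle$ sits in $S^p_{k-1}$ once $u \in S^p_k$, which in turn requires a Leibniz-type product inequality and the chain rule for smooth $P$ composed with $S^p_k$-functions. These facts are standard in the Folland--Stein setting once one works in local horizontal frames as in Example \ref{ct4}, but one should verify that the loss of integrability ($p \to p/2$ at the initial step) is harmless, which is precisely why the hypothesis $d_H f \in L^p$ is imposed \emph{for every} $p < \infty$.
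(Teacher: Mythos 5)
Your proposal is correct and is essentially the paper's own argument: the paper gives no separate proof of this theorem, remarking only that ``a similar argument'' to the parabolic regularity Theorem \ref{ct1} applies, and your elliptic bootstrap in the Folland--Stein spaces $S^p_k(\Delta_H,\cdot)$ --- hypoelliptic gain of two derivatives, differentiation of the nonlinearity, then the embedding into $C^{k,\alpha}$ --- is exactly that argument with $\mathcal{L}_t$ replaced by $\Delta_H$. The only cosmetic difference is bookkeeping: the paper tracks an explicit loss of integrability ($p_k = p/(k+1)!$) and chooses $p$ large at the outset, while you re-invoke the all-$p$ hypothesis at each stage; both are valid.
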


\section{Short-Time Existence} \label{s4}
In this section, we use the heat kernel of the subelliptic parabolic operator $\Delta_H - \partial_t$ to obtain a short-time solution of \eqref{a3}.

\begin{theorem} \label{bt5}
Suppose that $(M, HM, J_b, \theta)$ is a closed pseudo-Hermitian manifold and $(N, h)$ is a compact Riemannian manifold. For any $\phi \in C^\infty (M, N)$, there exists a maximal time $ \delta >0$ such that the pseudo-harmonic flow
\begin{align}
\frac{\partial f}{\partial t}  = \tau (f) \label{b19}
\end{align}
with the initial condition
\begin{align}
f (p, 0) = \phi (p) \label{b20}
\end{align}
has a unique solution $f \in C^\infty (M \times  (0, \delta), N)  \cap C^0 (M \times [0, \delta),  N)$. Moreover, if $\delta < + \infty$, then
\begin{align*}
\liminf_{t \rightarrow \delta} || d f ( \cdot, t) ||_{C^0 (M)} = + \infty
\end{align*}
\end{theorem}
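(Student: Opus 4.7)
By Theorem \ref{at3}, solving \eqref{b19}--\eqref{b20} reduces to constructing a short-time solution $u:M\times[0,T)\to\mathbb{R}^K$ of the subelliptic parabolic system \eqref{a3} with initial datum $\iota\circ\phi$, as long as $u$ remains in the tubular neighborhood $B(N)$. The plan is a standard heat kernel / Duhamel / contraction scheme, adapted to the sub-Riemannian setting. Since $\Delta_H$ is hypoelliptic on the closed manifold $M$ (Example \ref{ct4}), H\"ormander's theorem together with the Folland--Stein construction yields a smooth heat kernel $H(x,y,t)$ for $e^{t\Delta_H}$; the crucial bounds I need are contractivity of $e^{t\Delta_H}$ on $C^0(M)$ and the gradient estimate $\|\nabla_H e^{t\Delta_H}g\|_{C^0}\le C\,t^{-1/2}\|g\|_{C^0}$. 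By Duhamel's formula, \eqref{a3} is equivalent to the fixed point equation
\begin{align*}
\Phi(u)^a(x,t)=\bigl(e^{t\Delta_H}(\iota\circ\phi)^a\bigr)(x)-\int_0^t e^{(t-s)\Delta_H}\bigl[P^a_{bc}(u)\la\nabla_H u^b,\nabla_H u^c\ra\bigr](\cdot,s)(x)\,ds.
\end{align*}

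Working in the Banach space $X_T=\{u\in C^0(M\times[0,T];\mathbb{R}^K):\nabla_H u\in C^0\}$ equipped with the norm $\|u\|_{X_T}=\sup|u|+\sup|\nabla_H u|$, I would show that on a small ball around the free evolution $e^{t\Delta_H}(\iota\circ\phi)$, the map $\Phi$ is a self-contraction, provided $T$ is small in terms of $\|d\phi\|_{C^0}$ and the $C^2$ bounds of $P$ on $\overline{B(N)}$. The integrability of the $t^{-1/2}$ gradient estimate against $ds$ is what makes the quadratic nonlinearity absorbable. A continuity-in-$t$ argument together with $\phi(M)\subset N$ ensures that the resulting fixed point $u$ remains in $B(N)$ for $T$ small. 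Interior regularity is then upgraded via Theorem \ref{ct1} to give $u\in C^\infty(M\times(0,T),B(N))$, and Theorem \ref{at3} produces the desired $f=\iota^{-1}(u)\in C^\infty(M\times(0,T),N)\cap C^0(M\times[0,T],N)$ solving \eqref{b19}--\eqref{b20}. Uniqueness on the common interval of existence of two solutions follows from another contraction estimate applied to their difference.

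Let $\delta$ be the supremum of times for which such a solution exists, and suppose for contradiction that $\delta<\infty$ but $\liminf_{t\to\delta}\|df(\cdot,t)\|_{C^0}=A<\infty$. Choose $t_n\uparrow\delta$ with $\|df(\cdot,t_n)\|_{C^0}\le A+1$. The key observation is that the short-time existence above yields a lifetime $\eta>0$ depending only on a $C^1$-type bound on the initial datum, hence uniform in $n$. By Theorem \ref{ct1}, each $f(\cdot,t_n)$ is smooth with bounds controlled by $\|\nabla_H u\|_{C^0}$, so the contraction scheme can be restarted from $t=t_n$ to produce an extension of length $\eta$. For $n$ large enough that $t_n+\eta>\delta$, this extends the flow past $\delta$ and contradicts maximality, forcing the claimed blow-up.

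The main obstacle I expect is the calibration of the space $X_T$ and the smallness of $T$ so that the quadratic nonlinearity $P^a_{bc}(u)\la\nabla_H u^b,\nabla_H u^c\ra$ is absorbed by the subelliptic heat kernel: the $t^{-1/2}$ kernel estimate is precisely at the borderline needed to close the contraction, and simultaneously one must preserve the tubular-neighborhood constraint $u(x,t)\in B(N)$, which couples the $C^0$ bound on $u$ with the smoothness of $P$. A secondary, but essential, point is the uniformity of the existence time in a norm no stronger than $\|d\phi\|_{C^0}$, without which the restart argument for the blow-up criterion would fail.
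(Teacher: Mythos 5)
Your proposal follows essentially the same route as the paper: reduce to the extrinsic subelliptic system via Theorem \ref{at3}, run a Duhamel/heat-kernel iteration (the paper's explicit Picard scheme with the integrated gradient bound of Lemma \ref{bt3} is just your contraction argument spelled out), upgrade regularity via Theorem \ref{ct1}, prove uniqueness, and obtain the blow-up criterion by restarting the flow at times $t_n$ using the fact that the lifespan depends only on an upper bound for $\|d\phi\|_{C^0}$. The only cosmetic differences are that the paper proves uniqueness by a maximum principle applied to $\sum_a(u^a-v^a)^2$ rather than a second contraction estimate, and uses the slightly weaker $t^\beta$ ($\beta<\tfrac12$) integrated kernel bound in place of your pointwise $t^{-1/2}$ operator bound; neither changes the substance.
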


Before proving it, we recall some properties of Carnot-Carath\'eodory(CC) distance and heat kernel. Let $(M, HM, J_b, \theta)$ be a closed pseudo-Hermitian manifold with real dimension $2m +1$.

\begin{definition}
A piecewise $C^1$-curve $\gamma : [0,1] \rightarrow M$ is said to be horizontal if $\gamma' (t) \in HM$ whenever $\gamma' (t)$ exists. The length of $\gamma$ is given by $$ l(\gamma) =\int^1_0 |\gamma'|_{G_\theta} dt . $$
The Carnot-Carath\'eodory(CC) distance between two points $p,q \in M$ is
$$ d (p,q) = inf \{ l(\gamma) | \: \gamma \in C_{p,q} \} $$
where $C_{p,q}$ is the set of all horizontal curves joining $p$ and $q$.
\end{definition}

By Chow's Theorem (cf. \cite{strichartz1986sub}), there exists at least one horizontal curve arriving the CC distance. So it is finite. There are also other quasi-distances which hold weaker triangle inequality (cf. \cite{folland1974estimates,nagel1985balls}). But they are all local equivalent with CC distance. The authors in \cite{nagel1985balls} estimated the volume of the ball $B(p, \delta) = \{ q \in M | d(p, q) < \delta \}$.

\begin{lemma}[cf. Theorem 1 in \cite{nagel1985balls}] \label{bt1}
There exist constants $C_2$ and $\delta_0$ such that for any $p \in M$ and $\delta \leq \delta_0$, we have
\begin{align*}
C_2^{-1} \delta^{2m+2} \leq \int_{B(p, \delta)} d V \leq C_2 \delta^{2m +2}.
\end{align*}
\end{lemma}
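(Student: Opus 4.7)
The plan is to invoke the ball-box theorem of Nagel, Stein and Wainger after first verifying that our setting satisfies its hypotheses; the uniform constants will then come from compactness of $M$. The key structural input is that the horizontal distribution is bracket-generating at step two, with the Reeb direction arising from a single commutator of horizontal fields.

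First I would fix $p \in M$ and, following Example \ref{ct4}, choose on a coordinate neighborhood $\Omega$ of $p$ a horizontal orthonormal real frame $\{X_\alpha, X_{m+\alpha}\}_{\alpha=1}^m$. The strict pseudo-convexity of $(M, HM, J_b, \theta)$ forces the Levi form $L_\theta$ to be positive definite, and since $\theta$ vanishes on $HM$ the identity $d\theta(X,Y) = -\theta([X,Y])$ for horizontal $X, Y$ gives $L_\theta(X, Y) = -\theta([X, J_b Y])$. Hence each bracket $[X_\alpha, J_b X_\alpha]$ has a nonzero $T$-component, so the horizontal fields together with their length-two commutators span $TM$ on $\Omega$; H\"ormander's condition holds at step two.

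Next I would assign Nagel-Stein-Wainger weights: weight $1$ to each horizontal $X_i$ and weight $2$ to $T$, which is generated by a single length-two commutator of horizontal fields. Theorem 1 of \cite{nagel1985balls} then supplies for every sufficiently small $\delta$ an estimate
\begin{equation*}
\mathrm{Vol}\bigl(B(p, \delta)\bigr) \asymp \sum_I |\lambda_I(p)| \, \delta^{w(I)},
\end{equation*}
where the sum runs over ordered $(2m+1)$-tuples $I$ of vector fields drawn from $\{X_i\}$ and their iterated commutators, $\lambda_I(p)$ is the determinant of the corresponding frame at $p$, and $w(I)$ is the sum of the assigned weights. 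The smallest achievable value of $w(I)$ comes from taking all $2m$ horizontal fields together with one commutator contributing to $T$, giving $w(I) = 2m + 2$; the associated determinant is bounded below by a positive constant on $\Omega$ thanks to the step-two bracket-generating condition. The higher-weight terms contribute only at order $\delta^{2m+3}$ and above, which are absorbed once $\delta$ is small. Altogether, $\mathrm{Vol}(B(p, \delta)) \asymp \delta^{2m+2}$ on $\Omega$.

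Finally, to upgrade to a uniform estimate I would use that $M$ is closed: cover it by finitely many such neighborhoods $\Omega_1, \dots, \Omega_N$, on each of which the implicit constants can be chosen from continuous positive quantities on a compact subset, and then take the worst of the resulting $C_2$'s and the smallest of the $\delta_0$'s. The main obstacle is really the invocation of the Nagel-Stein-Wainger theorem itself, which rests on the construction of privileged coordinates adapted to the weighting scheme; once that machinery is granted, the specific exponent $2m+2$ and the uniformity in $p$ both follow routinely from compactness and the positivity of the Levi form.
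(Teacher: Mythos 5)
Your proposal is correct and follows the same route as the paper, which simply cites Theorem 1 of \cite{nagel1985balls} without supplying any argument of its own: what you have written out is precisely the verification that the Nagel--Stein--Wainger hypotheses hold (step-two bracket generation from positivity of the Levi form, already noted in Example \ref{ct4}), that the minimal-weight spanning tuple yields the homogeneous dimension $2m\cdot 1 + 2 = 2m+2$ with a nondegenerate determinant, and that compactness of $M$ gives uniform constants $C_2$ and $\delta_0$. This is exactly the intended justification for the citation.
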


Let $H(p,q ,t)$ be the fundamental solution of the subelliptic parabolic equation, i.e.
\begin{gather*}
( \partial_t - \Delta_H ) H(p,q ,t) = 0,\\
\lim_{t \rightarrow 0} H(p,q ,t) = \delta_q (p)
\end{gather*}
Hence by Duhamel's principle, the solution of
\begin{align}
( \partial_t -\Delta_H ) u (p, t) = F(p, t), \quad u(p, 0) = \phi (p) \label{b1}
\end{align}
is
\begin{align}
u(p,t)= \int_M H(p,q ,t)  \phi (q) dV_q + \int_0^t \int_M H(p,q ,t-s) F(q, s) dV_q ds \label{b2}
\end{align}
It is easy to check that $u \equiv 1$ is a solution of \eqref{b1} with $F=0$ and $\phi=1$. Hence by \eqref{b2}, we have
\begin{align}
\int_M H(p, q, t) dV_q =1.  \label{b3}
\end{align}

\begin{lemma}[\cite{jerison1987subelliptic, sanchez1984fundamental}]
$H(p, q, t)$ is positive for all $p, q \in M$ and $t >0 $. For sufficiently large $N$, there exists constant $C_{N}$ such that
\begin{align}
|  \nabla_H H(p, q, t) | \leq C_N  t^{-m-\frac{3}{2}} \bigg( 1+ \frac{d (p, q)^2 }{t} \bigg)^{-N} \label{b4} \\
H(p, q,t) \leq C_N t^{-m-1} \bigg( 1+ \frac{d (p, q)^2 }{t} \bigg)^{-N} \label{b21}
\end{align}
for $t \in (0, 1]$.
\end{lemma}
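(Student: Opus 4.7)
The plan is to (i) prove positivity by a semigroup/maximum-principle argument, and (ii) derive the two Gaussian-type estimates by constructing a parametrix modelled on a nilpotent Lie-group approximation, using the Rothschild–Stein lifting machinery together with the volume doubling in Lemma \ref{bt1}.

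For positivity, I would use that $\Delta_H - \partial_t$ is hypoelliptic (Example \ref{ct4}) and satisfies a strong maximum principle in the subelliptic sense: a nonnegative supersolution that vanishes at some interior point must vanish identically along horizontal paths, and by Chow's theorem the horizontal reachable set is all of $M$. Since $H(\cdot,q,t) \to \delta_q$ as $t \to 0^+$ and $\int_M H(\cdot,q,t)\,dV = 1$ from \eqref{b3}, $H$ is not identically zero, so $H > 0$ everywhere for $t > 0$. Alternatively, $e^{t\Delta_H}$ is a Markov semigroup (conservative by \eqref{b3}, contractive on $L^\infty$, generated by a sub-Laplacian), hence positivity preserving.

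For the pointwise bound on $H$, the key is the Rothschild–Stein lifting theorem: around any $p_0 \in M$, after adding extra variables one can lift the local horizontal frame $\{X_\alpha, X_{m+\alpha}\}$ of Example \ref{ct4} to free vector fields of step $2$, realising $\Delta_H$ (modulo first-order terms) as a left-invariant sub-Laplacian $\mathcal{L}_0$ on a free step-$2$ nilpotent Lie group $G$. The Folland–Stein/Hulanicki heat kernel $h_0(x,y,t)$ on $G$ is explicit enough to admit the dilation-homogeneous bound
\begin{align*}
h_0(x,y,t) \leq C_N\, t^{-Q/2}\bigl(1 + |x^{-1}y|^2/t\bigr)^{-N},
\end{align*}
where $Q = 2m+2$ is the homogeneous dimension and $|\cdot|$ is a homogeneous norm equivalent to the CC distance. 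I would transplant $h_0$ to a chart around $p_0$ to produce a parametrix $H_0$; the error $(\partial_t - \Delta_H) H_0$ has one fewer power of $t^{-1/2}$ (or equivalently one extra power of the CC distance) than $H_0$, so a Duhamel/Neumann-series iteration converges. The off-diagonal decay is preserved at each step because the convolution $H_0 \ast E$ against the error can be estimated using Lemma \ref{bt1} to integrate the Gaussian factor on the $\delta^{2m+2}$ volume scale. Summing the Neumann series and patching via a partition of unity (only finitely many charts since $M$ is closed) yields $H$ with the claimed bound \eqref{b21} for $t \in (0,1]$.

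The gradient bound \eqref{b4} follows by the same scheme applied to $X_\alpha H_0$: on $G$, the left-invariant horizontal fields are homogeneous of degree $1$ under the dilations, so differentiating $h_0$ produces an extra factor of $t^{-1/2}$, giving the $t^{-m-3/2}$ rate. Carrying this derivative through the Duhamel iteration preserves off-diagonal decay, since differentiating the parametrix only and not the error term lets the iteration argument proceed unchanged. The main obstacle is the uniform control of the Neumann series: one must show the series converges uniformly in $p,q \in M$ and $t \in (0,1]$ with the Gaussian weight intact, which is precisely where the volume doubling of Lemma \ref{bt1} and the homogeneity of the nilpotent approximation must cooperate — this is the technical heart of the Sanchez-Calle / Jerison–Sanchez-Calle construction, and once it is in place the stated bounds are immediate corollaries.
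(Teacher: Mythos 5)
The paper gives no proof of this lemma at all: it is imported as a black box from the cited references \cite{jerison1987subelliptic, sanchez1984fundamental}, and your outline correctly reconstructs the strategy of exactly those works (positivity via the conservative Markov semigroup / subelliptic strong maximum principle, and the Gaussian-type bounds via Rothschild--Stein lifting to a free nilpotent group, a homogeneous parametrix, and a Duhamel--Neumann iteration controlled by the volume estimate of Lemma \ref{bt1}). In that sense you take the same route the paper implicitly relies on; just note that what you have written is a roadmap rather than a proof, since you explicitly defer the crux --- uniform convergence of the Neumann series with the off-diagonal weight preserved --- to the literature, which is precisely what the paper itself does by citing rather than proving.
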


\begin{lemma} \label{bt3}
For any $\beta \in (0, \frac{1}{2})$, there exists a  constant $C_\beta$ such that
\begin{gather}
\int^t_0 \int_M |\nabla_{H, p} H(p, q , s)| dV_q ds \leq C_\beta t^{\beta} \label{b7}
\end{gather}
for any $p \in M$ and $t \in ( 0, 1 ]$. Here $\nabla_{H,p} H(p,q,s)$ means the derivative of $H(p,q,s)$ with respect to $p$. 
\end{lemma}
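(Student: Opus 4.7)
The plan is to reduce the lemma to the pointwise-in-time estimate
\begin{equation*}
\int_M |\nabla_{H,p} H(p,q,s)|\, dV_q \;\le\; C\, s^{-1/2}\qquad\text{for all } s\in(0,1].
\end{equation*}
Once this is established, integrating in $s$ from $0$ to $t$ yields $2C\,t^{1/2}$; since $t\in(0,1]$ and $\beta<1/2$, one has $t^{1/2}\le t^\beta$, so the lemma follows with $C_\beta=2C$.

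To prove the uniform $s^{-1/2}$ bound, I would fix $p\in M$ and dyadically decompose
\begin{equation*}
M \;=\; B(p,\sqrt{s}) \;\cup\; \bigcup_{k\ge 0} A_k, \qquad A_k \;=\; B\bigl(p, 2^{k+1}\sqrt{s}\bigr)\setminus B\bigl(p, 2^k\sqrt{s}\bigr),
\end{equation*}
truncated where $M$ is exhausted. On $B(p,\sqrt s)$, estimate \eqref{b4} gives $|\nabla_{H,p}H|\le C_N s^{-m-3/2}$ and Lemma \ref{bt1} gives $\mathrm{vol}(B(p,\sqrt s))\le C_2 s^{m+1}$ (valid once $\sqrt s\le\delta_0$; the regime $s\in[\delta_0^2,1]$ is handled directly by compactness). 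On each annulus with $2^{k+1}\sqrt s\le \delta_0$, the decay factor in \eqref{b4} satisfies $(1+d(p,q)^2/s)^{-N}\le 4^{-kN}$ while the enclosing ball has volume at most $C_2(2^{k+1}\sqrt s)^{2m+2}$, so the contribution is of order $4^{-k(N-m-1)}s^{-1/2}$. Choosing any fixed $N>m+1$ makes the geometric series in $k$ summable, yielding the desired $s^{-1/2}$ bound modulo the far tail.

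The tail, consisting of annuli with $2^{k+1}\sqrt s > \delta_0$ where Lemma \ref{bt1} no longer applies, is handled by compactness: there $d(p,q)\ge \delta_0/2$, so $(1+d(p,q)^2/s)^{-N}\le (4s/\delta_0^2)^N$, hence $|\nabla_{H,p}H|\le C_N \delta_0^{-2N}\, s^{N-m-3/2}$. If $N$ is chosen large enough that $N-m-3/2\ge -1/2$, the integral of this over $M$ is bounded by a constant multiple of $s^{-1/2}$, uniformly in $s\in(0,1]$. The only real obstacle is this bookkeeping, namely picking $N$ large enough that both the dyadic geometric sum converges and the compactness-dominated far contribution behaves no worse than $s^{-1/2}$; aside from that, the proof uses only the gradient estimate \eqref{b4} and the volume comparison of Lemma \ref{bt1}.
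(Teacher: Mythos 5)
Your argument is correct, and all the bookkeeping you flag as the ``only real obstacle'' does work out: any $N\ge m+2$ simultaneously makes the dyadic series converge (you need $N>m+1$) and tames the far tail (you need $N\ge m+1$), and the short-time restriction $\sqrt{s}\le\delta_0$ plus the residual range $s\in[\delta_0^2,1]$ are handled exactly as you say. The route is genuinely different from the paper's, though it uses the same two ingredients (the gradient bound \eqref{b4} and the volume estimate of Lemma \ref{bt1}). The paper makes a single two-piece split of $M$ at the $\beta$-dependent radius $s^\gamma$ with $\gamma=\frac{\beta+m+1/2}{2m+2}$, bounds the decay factor trivially by $1$ on the inner ball and by $s^{N(1-2\gamma)}$ outside, and tunes $\gamma$ so that the time integral of the resulting $s^{\beta-1}$ comes out as exactly $t^\beta/\beta$; the constant $C_\beta$ degenerates as $\beta\to\frac12$ because the inner-ball bound is wasteful. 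Your dyadic decomposition at the natural parabolic scale $\sqrt{s}$ exploits the full Gaussian-type decay annulus by annulus and therefore proves the sharper, $\beta$-independent statement $\int_M|\nabla_{H,p}H(p,q,s)|\,dV_q\le C s^{-1/2}$, from which the lemma (indeed even the endpoint $\beta=\frac12$) follows by integration in time. What the paper's version buys is brevity; what yours buys is the sharp pointwise-in-time $L^1$ gradient bound, which is of independent use.
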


\begin{proof}
We choose
$$ \gamma =\frac{\beta + m + \frac{1}{2}}{2m +2} $$ 
such that
\begin{align*}
-1 < \gamma (2m+2) - (m+ \frac{3}{2}) < - \frac{1}{2}.
\end{align*}
If $q \in B (p, s^\gamma)$, then $1+ \frac{d (p, q)^2}{s} \geq 1$. If $q \in M \setminus B (p, s^\gamma )$, then $1+ \frac{d (p, q)^2}{s} \geq s^{2\gamma -1}$. 
Hence by Theorem \ref{bt1} and \eqref{b4}, we obtain
\begin{align*}
& \int^t_0 \int_M |\nabla_{H, p} H(p, q , s)| dV_q ds \\
& \leq C_N \int^t_0 \bigg( \int_{B(p, s^\gamma)} + \int_{M \setminus B(p, s^\gamma) } \bigg) s^{-m - \frac{3}{2}} \bigg( 1+ \frac{d (x, y)^2}{s} \bigg)^{- N} dV_x ds \\
& \leq C_N C_2 \int_0^t \bigg( s^{ \gamma (2m+2) -m - \frac{3}{2}} + s^{ N(1- 2 \gamma) -m - \frac{3}{2}} \bigg) ds \\
& \leq C_\beta t^\beta
\end{align*}
where $\beta =  \gamma (2m+2) -m  - \frac{1}{2}$ and $N$ is sufficiently large.
\end{proof}

A similar argument with \eqref{b21} leads to the following estimate.

\begin{lemma} \label{bt4}
For any $\alpha \in (0, \frac{1}{2})$, there exists a constant $\tilde{C}_\alpha$ such that if $\phi \in C^1 (M)$, then
\begin{gather}
\int_M  H(p, q , t) \big| \phi (q) - \phi (p) \big| dV_q \leq \tilde{C}_\alpha t^{\alpha} ( ||\phi ||_{C^0} + || \nabla_H \phi||_{C^0} ) \label{b11}
\end{gather}
for any $p \in M$ and $t \in ( 0, 1 ]$.
\end{lemma}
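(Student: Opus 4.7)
The plan is to mirror the proof of Lemma \ref{bt3}: for an exponent $\gamma \in (0, 1/2)$ chosen in terms of $\alpha$, split the integral over $M$ into a piece over the Carnot-Carath\'eodory ball $B(p, t^\gamma)$ and a piece over its complement, estimating the factor $|\phi(q) - \phi(p)|$ in two different ways on the two regions. The two contributions should then be balanced against $t^\alpha$ by appropriate choices of $\gamma$ and $N$.

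On the inner region I would exploit the horizontal Lipschitz property of $\phi$. For any piecewise $C^1$ horizontal curve $\sigma : [0,1] \to M$ joining $p$ and $q$, integration along $\sigma$ yields
\[ \phi(q) - \phi(p) = \int_0^1 \langle d\phi, \sigma'(s) \rangle \, ds = \int_0^1 \langle \nabla_H \phi, \sigma'(s) \rangle \, ds \]
because $\sigma'(s) \in HM$, so $|\phi(q) - \phi(p)| \leq ||\nabla_H \phi||_{C^0} \, l(\sigma)$; taking the infimum over such $\sigma$ gives $|\phi(q) - \phi(p)| \leq d(p,q) \, ||\nabla_H \phi||_{C^0}$. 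On $B(p, t^\gamma)$ this is at most $t^\gamma \, ||\nabla_H \phi||_{C^0}$, and combined with positivity of $H$ and the mass conservation \eqref{b3}, the inner contribution is bounded by $t^\gamma \, ||\nabla_H \phi||_{C^0}$.

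On the outer region I would use the crude bound $|\phi(q) - \phi(p)| \leq 2 \, ||\phi||_{C^0}$ together with the heat-kernel decay \eqref{b21}. Since $d(p,q) \geq t^\gamma$ forces $1 + d(p,q)^2/t \geq t^{2\gamma-1}$, one gets
\[ H(p,q,t) \leq C_N \, t^{-m-1+N(1-2\gamma)}, \]
so, using that $M$ has finite volume, the outer contribution is at most a constant multiple of $||\phi||_{C^0} \, t^{-m-1+N(1-2\gamma)}$.

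Taking $\gamma = \alpha$ (which lies in $(0, 1/2)$ by assumption) makes the inner term a constant multiple of $t^\alpha \, ||\nabla_H \phi||_{C^0}$, and then choosing $N$ large enough that $N(1-2\alpha) - m - 1 \geq \alpha$ ensures the outer term is dominated by a constant multiple of $t^\alpha \, ||\phi||_{C^0}$ for $t \in (0,1]$; adding the two yields the desired inequality with $\tilde{C}_\alpha$ independent of $p$ and $\phi$. I do not anticipate any real obstacle: the CC-Lipschitz step relies only on the infimum definition of $d(p,q)$, so no refined information about horizontal geodesics is needed, and the rest is a routine balancing of $\gamma$ and $N$ exactly as in Lemma \ref{bt3}.
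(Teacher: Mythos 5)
Your proof is correct and is essentially the "similar argument" the paper intends (the paper omits the details, only remarking that Lemma \ref{bt4} follows from the same ball/complement splitting as Lemma \ref{bt3}, now using \eqref{b21}). The one small deviation is that on the inner ball you invoke the mass conservation \eqref{b3} rather than the volume estimate of Lemma \ref{bt1}, which cleanly lets you take $\gamma=\alpha$; this is a legitimate simplification, and your CC--Lipschitz bound $|\phi(q)-\phi(p)|\leq d(p,q)\,\|\nabla_H\phi\|_{C^0}$ is justified exactly as you describe.
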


Analogous to the elliptic parabolic equation, the subelliptic parabolic equation has the maximum principle. There are many versions and different proofs. For convenience, we give a simple one and deduce it.

\begin{lemma} \label{bt2}
Let $u \in C^2 ( M \times (0, T_0) ) \cap C^0 ( M \times [0, T_0) )$ is a solution of the subelliptic parabolic equation
\begin{align}
(\Delta_H -  \partial_t) u \geq 0, \ u(p, t) = \phi (p) .
\end{align}
If $\phi \leq c$ for some $c \in \mathbb{R}$, then $u(p, t) \leq c$ for all $t \in [0, T_0)$ and $p \in M$.
\end{lemma}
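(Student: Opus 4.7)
The plan is to adapt the classical parabolic maximum principle using an $\varepsilon$-perturbation to upgrade the weak inequality $(\Delta_H - \partial_t)u \geq 0$ to a strict one, so that a contradiction can be extracted at a would-be interior maximum. Fix $T \in (0, T_0)$ and $\varepsilon > 0$, and set
\begin{equation*}
v(p,t) = u(p,t) - c - \varepsilon t \quad \text{on } M \times [0, T].
\end{equation*}
Then $v \in C^2(M \times (0,T]) \cap C^0(M \times [0,T])$, $v(\cdot, 0) = \phi - c \leq 0$, and $(\Delta_H - \partial_t) v = (\Delta_H - \partial_t) u + \varepsilon \geq \varepsilon > 0$ on $M \times (0, T]$.

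Since $M \times [0,T]$ is compact, $v$ attains its maximum at some $(p_0, t_0)$. Suppose for contradiction that $\max v > 0$; then $t_0 > 0$, because $v \leq 0$ at $t=0$. At the interior-in-time maximum $(p_0, t_0)$ one has $\partial_t v(p_0, t_0) \geq 0$ (equal to $0$ if $t_0 < T$, and nonnegative if $t_0 = T$). For the spatial part I would invoke the explicit local form of the sub-Laplacian recorded in Example \ref{ct4}:
\begin{equation*}
\Delta_H = \sum_{i=1}^{2m} X_i^2 + Z, \qquad Z = \sum_{\alpha=1}^m (\nabla_{X_\alpha} X_\alpha + \nabla_{X_{m+\alpha}} X_{m+\alpha}) \in \Gamma(HM),
\end{equation*}
where $\{X_1, \dots, X_{2m}\}$ is a local horizontal orthonormal frame. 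At the spatial maximum $p_0$, $dv(p_0) = 0$, so $Z v (p_0) = 0$ and also $X_i v (p_0) = 0$. Furthermore, $X_i^2 v (p_0) = X_i(X_i v)(p_0) = (\mathrm{Hess}\, v)(X_i, X_i)(p_0) + (\nabla_{X_i} X_i)v(p_0)$, and the last term vanishes because $dv(p_0) = 0$. The Hessian of $v$ is negative semi-definite at a local maximum, so $X_i^2 v(p_0) \leq 0$ for each $i$, hence $\Delta_H v(p_0, t_0) \leq 0$. Combining, $(\Delta_H - \partial_t) v(p_0, t_0) \leq 0$, contradicting the strict inequality above.

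Therefore $v \leq 0$, i.e., $u \leq c + \varepsilon t$ on $M \times [0, T]$. Since $\varepsilon > 0$ and $T < T_0$ were arbitrary, letting $\varepsilon \downarrow 0$ and $T \uparrow T_0$ gives $u \leq c$ on $M \times [0, T_0)$. The only delicate step is the inequality $\Delta_H v(p_0) \leq 0$ at a spatial maximum; the rest is the standard first-time-of-violation argument. This step works only because the zero-order piece of $\Delta_H$ is absent and the first-order piece is horizontal (hence a tangent vector field that annihilates $v$ at a critical point), a feature that the decomposition in Example \ref{ct4} makes manifest.
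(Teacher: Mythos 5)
Your proof is correct and follows essentially the same route as the paper's: perturb $u$ by a term linear in $t$ to make the differential inequality strict, locate a maximum, and derive a contradiction from $\partial_t v \geq 0$ and $\Delta_H v \leq 0$ there. The only substantive difference is that you justify the inequality $\Delta_H v(p_0) \leq 0$ at a spatial maximum via the sum-of-squares decomposition of Example \ref{ct4}, a step the paper asserts without comment.
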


\begin{proof}
Fix $\sigma >0$ and set $u_\sigma (p, t) = u (p, t) - \sigma (1+t)$. By the assumption, $u_\sigma (p, 0) < c$. We claim that $u_\sigma < c$ for all $t \in [0, T_0)$ and $p \in M$. Otherwise, there exists $\sigma >0$ and $(p_1, t_1) \in M \times (0, T_0)$ such that
\begin{align*}
u_\sigma \big|_{M \times [0, t_1 )} <c, \ u_\sigma(p_1, t_1) = \max_M u_\sigma (p, t_1 ) =c
\end{align*}
which yields that
\begin{align*}
\partial_t u_\sigma (p_1, t_1) \geq 0, \mbox{ and } \Delta_H u_\sigma (p_1, t_1) \leq 0.
\end{align*}
But this leads a contradiction with
\begin{align*}
(\Delta_H -  \partial_t) u_\sigma (p_1, t_1) = \sigma > 0.
\end{align*}
Hence $u_\sigma \leq c$ on $M \times [0, T_0)$. We can complete the proof by taking $\sigma \rightarrow 0$.
\end{proof}

Now we can announce the short-time existence of the subelliptic parabolic system.

\begin{theorem} \label{bt6}
Suppose that $(M, HM, J_b, \theta)$ is a closed pseudo-Hermitian manifold and $(N, h)$ is a compact Riemannian manifold which is isometrically embedded by $\iota$ in $(\mathbb{R}^K, g_{can})$. For any $\phi \in C^\infty (M, N)$, there exists $T_0 >0$ such that the subelliptic parabolic system
\begin{align}
(\Delta_H -\partial_t ) u^a=  P^a_{bc} (u) \la \nabla_H u^b , \nabla_H u^c \ra \label{b17}
\end{align}
with the initial condition
\begin{align}
u^a (p, 0) = \phi^a (p) \label{b18}
\end{align}
has a solution $u \in C^\infty (M \times  (0, T_0), B(N))  \cap C^0 (M \times [0, T_0],  B(N))$ where $B(N)$ is the tubular neighbourhood of $\iota (N)$. Moreover,
\begin{align}
\sum_a ||\nabla_H u^a||_{C^0 (M \times [0, T_0])} \leq 2 K e^{C_1} || d \phi  ||_{C^0 (M)}, \label{b9}
\end{align}
The value of $T_0$ only depends on the upper bound of $|| d \phi||_{C^0 (M)}$ and $C_1$ which is given in Lemma \ref{at2}.
\end{theorem}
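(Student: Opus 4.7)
Proof proposal. The plan is to solve the Cauchy problem \eqref{b17}--\eqref{b18} by a Banach contraction on its Duhamel integral form, invoke Theorem \ref{ct1} to boost regularity, and derive the gradient bound \eqref{b9} via Lemma \ref{at2} together with the maximum principle Lemma \ref{bt2}.

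Set $F^a(u) := -P^a_{bc}(u) \la \nabla_H u^b, \nabla_H u^c \ra$ and rewrite the problem as the fixed-point equation $u = \Psi(u)$, where
\[
\Psi(u)^a(p, t) := \int_M H(p, q, t) \phi^a(q) \, dV_q + \int_0^t \int_M H(p, q, t-s) F^a(u)(q, s) \, dV_q \, ds .
\]
I would work in the Banach space
\[
X_{T_0} := \{ u \in C^0(M \times [0, T_0], \mathbb{R}^K) : \nabla_H u^a \in C^0 \},
\]
with norm $\| u \|_{X_{T_0}} := \sum_a ( \| u^a \|_{C^0} + \| \nabla_H u^a \|_{C^0} )$, and show that, for $T_0$ small, $\Psi$ contracts on a closed ball around $\phi$ that remains inside $C^0(M \times [0, T_0], B(N))$.

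The key estimates are the following. Applying \eqref{a9} componentwise with $(\Delta_H - \partial_t) v^a = 0$ to the linear propagator $v := \int_M H(\cdot, q, t) \phi(q)\, dV_q$ together with Lemma \ref{bt2} gives $\| \nabla_H v \|_{C^0} \leq e^{C_1 t/2} \| \nabla_H \phi \|_{C^0}$; Lemma \ref{bt4} gives $\| v - \phi \|_{C^0} \to 0$ as $t \to 0$; Lemma \ref{bt3} gives
\[
\Big\| \nabla_H \int_0^t \int_M H(\cdot, q, t-s) G(q, s) \, dV_q \, ds \Big\|_{C^0} \leq C_\beta t^\beta \| G \|_{C^0}
\]
for any $G \in C^0$; and \eqref{b3} yields the trivial bound $t \, \| G \|_{C^0}$ for the corresponding $C^0$ norm. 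Since $P^a_{bc}$ is smooth on the closure of $B(N)$ and $F^a$ is quadratic in $\nabla_H u$ with Lipschitz dependence on $u$, a standard splitting $F(u) - F(w) = [P(w) - P(u)]\nabla u \nabla u + P(w)[\nabla w \nabla w - \nabla u \nabla u]$ shows that these bounds combine to prove $\| \Psi(u) - \Psi(w) \|_{X_{T_0}} \leq C_R (T_0 + C_\beta T_0^\beta) \| u - w \|_{X_{T_0}}$ on the ball $\{ \| u - \phi \|_{X_{T_0}} \leq 1 \}$. Choosing $T_0$ small enough makes this a strict contraction and additionally keeps the image in $B(N)$ (using Lemma \ref{bt4} for the $C^0$ drift and Lemma \ref{bt3} for the Duhamel correction). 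The resulting fixed point $u$ is a mild solution of \eqref{b17} with $\nabla_H u$ bounded, so Theorem \ref{ct1} yields $u \in C^\infty(M \times (0, T_0), B(N))$.

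For the gradient bound \eqref{b9}, Theorem \ref{at3} gives $u(M \times [0, T_0]) \subset \iota(N)$, so $f := \iota^{-1}(u)$ is a smooth pseudo-harmonic flow. Lemma \ref{at2} with $\tau(f) - \partial_t f \equiv 0$ yields $(\Delta_H - \partial_t) | df |^2 \geq - C_1 | df |^2$, so $w := e^{-C_1 t} | df |^2$ is a subsolution; Lemma \ref{bt2} gives $| df |^2 \leq e^{C_1 t} \| d \phi \|_{C^0}^2$. Restricting $T_0 \leq 2$, the componentwise chain
\[
\sum_a \| \nabla_H u^a \|_{C^0} \leq K \| d_H u \|_{C^0} \leq K \| d f \|_{C^0} \leq K e^{C_1} \| d \phi \|_{C^0} \leq 2 K e^{C_1} \| d \phi \|_{C^0}
\]
proves \eqref{b9}. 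The main obstacle I anticipate is the linear-propagator estimate $\| \nabla_H v \|_{C^0} \lesssim \| \nabla_H \phi \|_{C^0}$, which is not stated as a separate lemma in the excerpt but is obtainable by applying the scalar version of \eqref{a9} to each component of $v$ and invoking Lemma \ref{bt2}; once that is in hand, the remainder is bookkeeping of powers of $t$ using Lemmas \ref{bt3} and \ref{bt4} against the quadratic structure of $F$.
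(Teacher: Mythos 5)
Your existence argument is essentially the paper's: the paper runs the Picard iteration $u_k=\Psi(u_{k-1})$ on the same Duhamel formula, proves the iterates are Cauchy in exactly your norm $\sum_a(\|u^a\|_{C^0}+\|\nabla_H u^a\|_{C^0})$ via the same heat-kernel estimates (Lemmas \ref{bt3}, \ref{bt4} and \eqref{b3}), keeps the image in $B(N)$ the same way, and then invokes Theorem \ref{ct1}. Casting this as a Banach fixed point rather than an explicit iteration is a cosmetic difference.

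The genuine problem is your derivation of \eqref{b9}. You apply Lemma \ref{at2} to the nonlinear solution $f=\iota^{-1}(u)$ and then the maximum principle (Lemma \ref{bt2}) to $e^{-C_1t}|df|^2$ on $M\times[0,T_0]$. Lemma \ref{bt2} needs the subsolution to be continuous up to $t=0$ with boundary value $\le\|d\phi\|_{C^0}^2$, and $|df|^2$ contains the Reeb derivative $f_0=df(T)$. Nothing in your construction controls $f_0$ near $t=0$: your fixed-point space only carries $u$ and $\nabla_H u$, and the interior estimates of Theorem \ref{ct1} degenerate as $t\to0^+$, so you have not ruled out $\limsup_{t\to0^+}\sup_M|f_0|=\infty$, nor shown it is $\le\|d\phi\|_{C^0}$. (This cannot be patched by starting the maximum principle at $t=t_1>0$ and letting $t_1\to0$, for the same reason.) The paper avoids this entirely: it applies the Bochner inequality \eqref{a9} and the maximum principle only to the \emph{linear} solution $u_0$, which is smooth up to $t=0$, obtaining $z_0(t)\le Ke^{C_1t}\|d\phi\|_{C^0}$, and then propagates the bound to the nonlinear iterates through the recursion $z_k(t)\le KC_3C_\beta t^\beta z_{k-1}^2(t)+z_0(t)$ coming from the kernel gradient estimate \eqref{b7}; for $t$ small this keeps $z_k(t)\le 2Ke^{C_1}\|d\phi\|_{C^0}$ uniformly in $k$, and \eqref{b9} follows by passing to the limit. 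You should either adopt that route or enlarge your function space (and the contraction estimates) to control $df(T)$ up to $t=0$; as written, \eqref{b9} is not proved. A second, minor point: the scalar Bochner inequality \eqref{a9} governs $|dv|^2$, not $|\nabla_Hv|^2$ alone (the cross term $4\ii(f^i_{\ba}f^i_{0\alpha}-f^i_\alpha f^i_{0\ba})$ couples the horizontal and Reeb parts), so your propagator bound must be stated with $\|d\phi\|_{C^0}$ on the right-hand side, as in \eqref{b14}, not $\|\nabla_H\phi\|_{C^0}$.
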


\begin{proof}
The solution of the subelliptic parabolic equation
\begin{align}
(\Delta_H -\partial_t) u^a_0 =0, \mbox{ and } u^a_0 (p, 0)= \phi^a ( p ) \label{b6}
\end{align}
is given by
\begin{align*}
u^a_0 ( p , t) = \int_M H(p, q , t) \phi^a (q) d V_q.
\end{align*}
We define $u_k^a$ inductively for all $k =1, 2, \dots$ by
\begin{align*}
u^a_k (p, t) = - \int_0^t \int_M H(p, q , t-s) F^a_{k-1} (q, s) dV_q d s + u^a_0 (p,t),
\end{align*}
with
\begin{align}
F^a_{k-1} (q, s) = P^a_{bc} (u_{k-1}) \la \nabla_H u^b_{k-1} , \nabla_H u^c_{k-1} \ra (q, s) . \label{b5}
\end{align}
One can easily check that
\begin{align}
(\Delta_H -\partial_t) u^a_{k} = F^a_{k-1} \mbox{ and } u^a_k (p, 0) = \phi^a (p). \label{b16}
\end{align}
Let us consider
\begin{align*}
z_k (t) = \sup_{M \times [0, t]} \sum_a |\nabla_H u^a|
\end{align*}
which is a increasing function of $t$. Set
\begin{align*}
C_3 = \sup_{B (N), a,b, c,d} \big( |P^a_{bc} | + | P^a_{bcd} | \big).
\end{align*}
By the definition \eqref{b5} of $F_k$, we have
\begin{align*}
\sup_{M \times [0, t]} |F^a_k| \leq C_3 z_{k}^2 (t)
\end{align*}
which yields that
\begin{align}
| u^a_k (p, t) - u^a_0 (p, t) | \leq C_3 t z_{k-1}^2 (t) \label{b10}
\end{align}
and
\begin{align}
| u^a_k (p, t) | \leq C_3 t z_{k-1}^2 (t) + |u^a_0 (p, t) | \leq  C_3 t z_{k-1}^2 (t) + ||\phi^a||_{C^0 (M)}
\end{align}
due to the fact that $\int_M H(p, q, t) =1$. Moreover, Lemma \ref{bt4} shows that
\begin{align}
| u^a_0 (p, t) -\phi^a (p) | = & \left| \int_M H(p, q, t) \big( \phi^a (q) - \phi^a (p) \big) dV_q \right| \nonumber \\
\leq& \tilde{C}_\beta t^\beta ( || \nabla_H \phi^a ||_{C^0} + || \phi^a ||_{C^0} ) \label{b12}
\end{align}
for some $\beta \in (0, \frac{1}{2})$ and $\tilde{C}_\beta$ .
By \eqref{b6} and Lemma \ref{at2}, we conclude that
\begin{align*}
(\Delta_H - \partial_t) | d u^a_0 |^2 \geq - C_1 | d u^a_0 |^2,
\end{align*}
which is equivalent to
$$ (\Delta_H - \partial_t) ( e^{- C_1 t} | d u^a_0 |^2 ) \geq 0.  $$
The maximum principle (Lemma \ref{bt2}) shows that
\begin{align}
\sup_{M \times [0, t]} | d u^a_0 | \leq e^{C_1 t} || d \phi^a ||_{C^0} \Rightarrow z_0 (t) \leq K e^{C_1 t} || d \phi ||_{C^0 }. \label{b14}
\end{align}
On the other hand, by the estimate \eqref{b7}, we find
\begin{align*}
| \nabla_H u^a_k - \nabla_H u^a_0 |(p, t) \leq C_3 C_\beta t^\beta z_{k-1}^2 (t)
\end{align*}
where  $C_\beta $ is given in Lemma \ref{bt3}. Hence
\begin{align}
z_k (t) \leq K C_3 C_\beta t^\beta z_{k-1}^2 (t) + z_0 (t). \label{b8}
\end{align}
We choose that
\begin{align*}
T_1 = \min \bigg\{ \big( \frac{1}{8} K^{-2} e^{-C_1} C_3^{-1} C_\beta^{-1} || d \phi ||_{C_0}^{-1} \big)^{\frac{1}{\beta}} , 1 \bigg\} \leq 1,
\end{align*}
which yields that
\begin{align*}
K C_3 C_\beta T_1^\beta z_0 (T_1) \leq K^2 C_3 C_\beta e^{C_1} || d \phi ||_{C^0} T_1^\beta \triangleq \frac{\epsilon}{4} ,
\end{align*}
where $0 \leq \epsilon \leq  \frac{1}{2}$.
An inductive argument implies that
\begin{align*}
K C_3 C_\beta T_1^\beta z_k (T_1) \leq \frac{\epsilon}{2},
\end{align*}
since \eqref{b8} guarantees that
\begin{align*}
K C_3 C_\beta T_1^\beta z_k (T_1) \leq ( K C_3 C_\beta T_1^\beta z_{k-1} (T_1) )^2 + K C_3 C_\beta T_1^\beta z_0 (T_1) \leq \left( \frac{\epsilon}{2} \right)^2 + \frac{\epsilon}{4} \leq \frac{\epsilon}{2}.
\end{align*}
Thus we obtain \eqref{b9} because
\begin{align}
\sup_{M \times [0, T_1]} \sum_a |\nabla_H u^a_k | = z_k (T_1) \leq 2 K e^{C_1}  ||d \phi||_{C^0}. \label{b13}
\end{align}
Due to \eqref{b10} and \eqref{b12}, 
the image of $ u_k $ on $M \times [0, T_2]$ will lie in $B(N)$ by shrinking $T_1$ to $T_2$ which only depends on the upper bound of $||d \phi||_{C^0}$.

Now we show that $\{ u_k \}$ and $ \{ \nabla_H u_k \}$ are two Cauchy sequences in $C^0 (M \times [0, T_2], B(N))$. To see this, we define a non-decreasing function
\begin{align*}
X_k (t) = \sup_{M \times [0, t]} \sum_a | u^a_k -u^a_{k-1} | + \sup_{M \times [0, t]} \sum_a | \nabla_H u^a_k - \nabla_H u^a_{k-1} |.
\end{align*}
Note that
\begin{align*}
&F_k^a - F^a_{k-1} = P^a_{bc} (u_k ) \la \nabla_H u^b_k, \nabla_H u^c_k \ra - P^a_{bc} (u_{k-1} ) \la \nabla_H u^b_{k-1}, \nabla_H u^c_{k-1} \ra  \\
&= ( P^a_{bc} (u_k )- P^a_{bc} (u_{k-1}) ) \la \nabla_H u^b_k, \nabla_H u^c_k \ra + P^a_{bc} (u_{k-1} ) \la \nabla_H u^b_k - \nabla_H u^b_{k-1 }, \nabla_H u^c_k \ra \\
& \quad + P^a_{bc} (u_{k-1} ) \la \nabla_H u^b_{k-1}, \nabla_H u^c_k - \nabla_H u^c_{k-1} \ra
\end{align*}
Hence by \eqref{b13} and the mean value theorem, we have
\begin{align*}
\sup_{M \times [0, t ]} | F_k^a - F^a_{k-1} | \leq &  C_3 X_k (t) \big( z^2_k (t) + z_k (t) + z_{k-1} (t) \big) \leq  C_4 X_k (t),
\end{align*}
where $C_4 = 4 C_3 e^{2C_1} K^2 \big( ||d \phi||_{C^0}^2 + ||d \phi||_{C_0} \big)$. The fact that $\int_M H(p, q , t) =1$ and Lemma \ref{bt3} lead that
\begin{align*}
| u^a_k - u^a_{k-1} | \leq \int_0^t \int_M H(p, q , t-s) | F_k^a - F^a_{k-1} | (q, s) d V_q ds \leq C_4 t X_{k-1} (t)
\end{align*}
and
\begin{align*}
| \nabla_H u^a_k - \nabla_H u^a_{k-1} | \leq & \int_0^t \int_M | \nabla_{H, p} H(p, q , t-s)| \: | F_k^a - F^a_{k-1} | (q, s) d V_q ds \\
\leq & C_4 C_\beta t^\beta X_{k-1} (t).
\end{align*}
For simplicity, we assume $C_\beta \geq 1$. Otherwise we set $C_\beta =1$. Thus
\begin{align}
X_k (t) \leq 2 K C_4 C_\beta t^\beta X_{k-1} (t). \label{b15}
\end{align}
for all $k \geq 2$. For $k=1$, by \eqref{b14}, we have
\begin{align*}
| u^a_1 - u^a_0 | \leq \int_0^t \int_M H(p, q , t-s) |F^a_0 (q, s)| d V_q ds \leq C_3 t z_0^2 (t) \leq K^2 C_3 t e^{2 C_1} ||d \phi||^2_{C^0},
\end{align*}
and
\begin{align*}
| \nabla_H u^a_1 - \nabla_H u^a_{0} | \leq & \int_0^t \int_M | \nabla_{H, p} H(p, q , t-s)| \: | F_0^a  (q, s)| d V_q ds \\
\leq & C_3 C_\beta t^\beta z_0^2 (t) \leq K^2 C_3 C_\beta t^\beta e^{2 C_1} ||d \phi||^2_{C^0}.
\end{align*}
Hence we obtain the estimate
\begin{align*}
X_1 (t) \leq 2 K^2 C_3 C_\beta t^\beta e^{2 C_1} ||d \phi||^2_{C^0}.
\end{align*}
By \eqref{b15} and shrinking $T_2$ to $T_0 = \min \{ T_2, (\frac{1}{4} K C_4 C_\beta )^{\frac{1}{\beta}} \}$, we can conclude that
\begin{align*}
X_k (T_0) \leq \bigg( \frac{1}{2} \bigg)^{k-1} X_1 (T_0 ) \leq \bigg( \frac{1}{2} \bigg)^{k-2} K^2 C_3 C_\beta e^{2 C_1} ||d \phi||^2_{C^0}.
\end{align*}
which yields that
\begin{align*}
\sup_{M \times [0, T_0]} \sum_a \big( | u^a_k - u^a_l| + | \nabla_H u^a_k - \nabla_H u^a_l | \big) \leq K^2 C_3 C_\beta e^{2 C_1} ||d \phi||^2_{C^0} \sum_{i=k+1}^l \bigg( \frac{1}{2} \bigg)^{i-2}.
\end{align*}
Hence there exists the convergence $u \in C^0 ( M \times [0, T_0], B(N)) $ of $\{ u_k \}$ such that $\nabla_H u_k $ uniformly converges to $\nabla_H u$ on $M \times [0, T_0]$. Thus the conclusion \eqref{b9} follows from \eqref{b13}. Moreover,
\begin{align*}
F^a_k \rightarrow F^a = P^a_{bc} (u) \la \nabla_H u^b, \nabla_H u^c \ra.
\end{align*}
Taking $k \rightarrow +\infty$ on \eqref{b16}, $u$ is a weak solution of the subelliptic parabolic system \eqref{b17} and \eqref{b18}. According to the regularity theorem \ref{ct1} and the estimate \eqref{b9}, $ u \in C^\infty (M \times (0, T_0) , B (N))$. This completes the proof.
\end{proof}

To prove Theorem \ref{bt5}, we also need the uniqueness theorem.

\begin{theorem}[Uniqueness] \label{bt7}
Assume that $u, v \in C^\infty (M \times  (0, T_0), B(N))  \cap C^0 (M \times [0, T_0],  B(N))$ are two solutions of the subelliptic parabolic system \eqref{b17} with the same initial condition. If $\nabla_H v$ and $\nabla_H u$ are uniformly bounded on $M \times (0, T_0)$, then $u \equiv v $.
\end{theorem}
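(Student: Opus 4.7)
The plan is to set $w^a = u^a - v^a$ and show that $\int_M |w|^2 \, dV \equiv 0$ via a Gronwall-type energy estimate. First I would subtract the two copies of the equation \eqref{b17} to get
\begin{align*}
(\Delta_H - \partial_t) w^a = P^a_{bc}(u) \la \nabla_H u^b, \nabla_H u^c \ra - P^a_{bc}(v) \la \nabla_H v^b, \nabla_H v^c \ra.
\end{align*}
Splitting the right-hand side exactly as in the Cauchy-sequence step of Theorem \ref{bt6} (the three-term decomposition used to bound $F_k^a - F_{k-1}^a$), the mean value theorem applied to $P^a_{bc}$ on the compact set $\overline{B(N)}$, together with the uniform bounds on $\nabla_H u$ and $\nabla_H v$, gives a pointwise estimate
\begin{align*}
\bigl| P^a_{bc}(u) \la \nabla_H u^b, \nabla_H u^c \ra - P^a_{bc}(v) \la \nabla_H v^b, \nabla_H v^c \ra \bigr| \leq C_5 \bigl( |w| + |\nabla_H w| \bigr),
\end{align*}
where $C_5$ depends only on $\sup_{B(N)} (|P^a_{bc}| + |P^a_{bcd}|)$ and the uniform bound for $|\nabla_H u|, |\nabla_H v|$.

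Next I would compute, for $t \in (0, T_0)$,
\begin{align*}
\frac{d}{dt} \int_M |w|^2 \, dV = 2 \int_M w^a \partial_t w^a \, dV = -2 \int_M |\nabla_H w|^2 \, dV - 2 \int_M w^a \bigl( P^a_{bc}(u) \la \nabla_H u^b, \nabla_H u^c \ra - P^a_{bc}(v) \la \nabla_H v^b, \nabla_H v^c \ra \bigr) dV,
\end{align*}
after integration by parts (valid since $M$ is closed and $u, v$ are smooth for $t > 0$). Plugging in the pointwise bound and applying Young's inequality $|w| |\nabla_H w| \leq \frac{1}{2C_5} |\nabla_H w|^2 + \frac{C_5}{2} |w|^2$, the $|\nabla_H w|^2$ contribution is absorbed into the good term $-2\int_M |\nabla_H w|^2$, leaving
\begin{align*}
\frac{d}{dt} \int_M |w|^2 \, dV \leq C_6 \int_M |w|^2 \, dV
\end{align*}
for some constant $C_6$ depending only on $C_5$.

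Finally, since $w \in C^0(M \times [0, T_0], \mathbb{R}^K)$ and $w(\cdot, 0) \equiv 0$, the function $t \mapsto \int_M |w(\cdot,t)|^2 dV$ is continuous on $[0, T_0]$ and vanishes at $t=0$. Integrating the differential inequality from $\varepsilon$ to $t$ and letting $\varepsilon \to 0^+$ by continuity, Gronwall's inequality yields $\int_M |w|^2 \, dV \equiv 0$, hence $u \equiv v$. The mildest technical point is justifying the passage to $t = 0$: since the smooth-regularity is only on the open interval $(0, T_0)$, one must use the $C^0$ continuity up to the boundary to transfer the initial condition into the integral inequality, but this is straightforward because the energy quantity itself is continuous up to $t=0$. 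No curvature geometry from the target or domain enters — the argument is purely a parabolic energy estimate made possible by the smoothness of $P$ and the \emph{a priori} uniform bound on $\nabla_H u, \nabla_H v$ given in the hypothesis.
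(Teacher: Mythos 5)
Your proof is correct, but it reaches the conclusion by a different mechanism than the paper. You both start from the identical three-term decomposition of $P^a_{bc}(u)\la \nabla_H u^b,\nabla_H u^c\ra - P^a_{bc}(v)\la \nabla_H v^b,\nabla_H v^c\ra$ and the same mean-value-theorem bound $C_5(|w|+|\nabla_H w|)$ using the hypothesis that $\nabla_H u,\nabla_H v$ are uniformly bounded. From there the paper stays pointwise: it sets $w=\sum_a(u^a-v^a)^2$, computes $(\Delta_H-\partial_t)w = 2\sum_a(u^a-v^a)(\Delta_H-\partial_t)(u^a-v^a)+2\sum_a|\nabla_H(u^a-v^a)|^2$, absorbs the gradient cross-term into the positive $|\nabla_H(u^a-v^a)|^2$ by Cauchy's inequality to get $(\Delta_H-\partial_t)w\geq -C_5''w$, and then invokes the parabolic maximum principle (Lemma \ref{bt2}) applied to $e^{-C_5''t}w$ to conclude $w\equiv 0$. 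You instead integrate over $M$, use $\int_M \psi\,\Delta_H\psi = -\int_M|\nabla_H\psi|^2$ on the closed manifold to produce the good term $-2\int_M|\nabla_H w|^2$, absorb the cross-term there by Young's inequality, and close with Gronwall plus the $C^0$ continuity of the energy down to $t=0$. The two routes are essentially dual: the maximum-principle version needs no integration by parts and gives a pointwise decay estimate for free, while your energy version avoids invoking the maximum principle lemma at the cost of the (harmless, since $M$ is closed and $u,v$ are smooth for $t>0$) integration by parts and the explicit limiting argument $\varepsilon\to 0^+$, which you correctly identify and handle. Both arguments share the same minor implicit assumption that the mean value theorem for $P^a_{bc}$ can be applied between the values of $u$ and $v$ in $B(N)$, so your proposal is on equal footing with the paper there.
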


\begin{proof}
It suffices to prove the function $w = \sum_a (u^a - v^a)^2 $ vanishes. Note that
\begin{align*}
& \big| (\Delta_H - \partial_t) (u^a - v^a) \big| = | F^a (u) - F^a (v) |  =  |P^a_{bc} (u) \la \nabla_H u^b, \nabla_H u^c \ra - P^a_{bc} (v) \la \nabla_H v^b, \nabla_H v^c| \ra  \\
& \leq | (P^a_{bc} (u) -P^a_{bc} (v) ) \la \nabla_H u^b, \nabla_H u^c \ra  | + | P^a_{bc} (v) \la \nabla_H u^b -\nabla_H v^b, \nabla_H v^c \ra |  + | P^a_{bc} (v) \la \nabla_H u^b, \nabla_H u^c -\nabla_H v^c \ra | \\
& \leq C_5 w^{\frac{1}{2}} + C_5' \sum_b | \nabla_H u^b -\nabla_H v^b |
\end{align*}
where $C_5$ and $C_5'$ depends on the bounds of $\nabla_H u^a$, $\nabla_H v^a$, $P^a_{bc}$ and $P^a_{bcd}$.
Hence by Cauchy inequality, we have
\begin{align*}
(\Delta_H - \partial_t) w =& 2 \sum_a (u^a - v^a) (\Delta_H - \partial_t) (u^a - v^a) + 2 \sum_a |\nabla_H u^a - \nabla_H v^a|^2 \\
\geq & - C_5'' w
\end{align*}
The maximum principle (Lemma \ref{bt2}) shows that
\begin{align*}
0 \leq w(p, t) \leq e^{C_5'' t} \sup_M |w (\cdot, 0)| =0
\end{align*}
which yields $u \equiv v$.
\end{proof}

Now we deduce Theorem \ref{bt5}.

\begin{proof}[Proof of Theorem \ref{bt5}]
Theorem \ref{at3} and Theorem \ref{bt6} guarantee the existence of the pseudo-harmonic flow \eqref{b19} with the initial condition \eqref{b20} near $t=0$. Now we prove that if $\delta < +\infty $ is the maximal time, then
\begin{align*}
\liminf_{ t \rightarrow \delta} || d u (t, \cdot)||_{C^0 (M)} = \infty.
\end{align*}
Otherwise, there exists a sequence $t_n$ and a positive number $B$ such that $t_n \rightarrow \delta$ and
\begin{align*}
|| d  u (\cdot, t_n)||_{C^0 (M)} \leq B, \mbox{ for each } n.
\end{align*}
We choose $ T_0 $ as in Theorem \ref{bt6} which makes the solution extend to $[t_n, t_n + T_0]$. Theorem \ref{bt7} ensures that the extended solution coincides with the original one $u$ on $[t_n,  \delta)$. This leads a contradiction with the definition of $\delta$.
\end{proof}

\section{Long-Time Existence} \label{s1}
For the long-time existence, it suffices to prove the total energy density is uniformly bounded. In the harmonic case, the energy is nonincreasing (cf. \cite{eells1964harmonic}) and thus is uniformly bounded; so is the energy density by the Morse iteration. This process has be generalized to pseudo-harmonic maps under the analytic assumption $[\Delta_H, T] =0$ (cf. \cite{chang2013existence}) where $T$ is the Reeb vector field. Note that Sasakian manifolds hold the assumption. But in general, this may be not true. However, some special tricks come into play and make the total energy also bounded.

Suppose that $(M, HM, J_b, \theta)$ is a closed pseudo-Hermitian manifold and $(N, h)$ is a compact Riemannian manifold with nonpositive sectional curvature. Let $\psi: M \rightarrow N$ be a smooth map.
Define the Reeb energy of $\psi $ by
\begin{align*}
E_R (\psi) = \int_M e_R (\psi) \theta \wedge (d \theta)^m
\end{align*}
where $e_R (\psi) = \frac{1}{2} |d\psi(T)|^2 = \frac{1}{2} |\psi_0|^2$ is called the Reeb energy density. The total energy and the total energy density are respectively given by
\begin{align*}
E(\psi)= \int_M e(\psi) = E_H(\psi) + E_R(\psi),
\end{align*}
and
\begin{align*}
e(\psi) = e_H (\psi) + e_R (\psi).
\end{align*}

Assume that $f : M \times (0, \delta) \rightarrow  N$ is a smooth map and satisfies the pseudo-harmonic flow
$$
\frac{\partial f}{\partial t}= \tau  (f).
$$
Set $f_t = f (\cdot, t)$. To deal with the total energy $E (f_t)$, we consider the horizontal part $E_H (f_t)$ and the Reeb part $E_R (f_t)$ respectively.

\begin{lemma}  \label{dt1}
$E_H (f_t)$ is a convex decreasing smooth function. Precisely
\begin{align}
\frac{d}{dt} E_H (f_t) =& - \int_M |\partial_t f|^2 = - \int_M |\tau  (f_t)|^2  \leq 0 \label{d1} \\
\frac{d^2}{d t^2 } E_H (f_t) =& 2 \int_M | \nabla_H \partial_t f |^2 - 2 \int_M trace_{G_\theta} \la R^N(\partial_t f,  d_H f) d_H f , \partial_t f \ra \geq 0 \label{d2}
\end{align}
Hence $E_H (f_t) \leq E_H (f_0) $. Moreover if the maximal time $\delta = + \infty$ , then $|| \tau  (f_t) ||_{L^2(M)} \rightarrow 0$ as $t \rightarrow \infty$.
\end{lemma}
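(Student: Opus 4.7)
The plan is to compute $\frac{d}{dt} E_H(f_t)$ and $\frac{d^2}{dt^2} E_H(f_t)$ by direct differentiation under the integral sign, using the flow equation $\partial_t f = \tau(f)$ together with integration by parts against the sub-Laplacian on sections of the pullback bundle $f^*TN$. Smoothness of $E_H(f_t)$ in $t$ is immediate from $f \in C^\infty(M \times (0,\delta), N)$ and the fact that $M$ is compact.

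For the first derivative I would work in a local orthonormal horizontal frame $\{e_a\}_{a=1}^{2m}$ of $HM$, writing $|d_H f|^2 = \sum_a |df(e_a)|^2$. Since $[\partial_t, e_a] = 0$ and the pullback connection is torsion-free with respect to Christoffel symbols of $N$, one has $\nabla^f_{\partial_t} df(e_a) = \nabla^f_{e_a} \partial_t f$. Thus
\[\frac{d}{dt} E_H(f_t) = \int_M \sum_a \langle df(e_a), \nabla^f_{e_a} \partial_t f \rangle\, dV.\]
Consider the horizontal vector field $Y$ on $M$ dual to $X \mapsto \langle \partial_t f, df(X) \rangle$ for $X \in HM$. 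Because the Tanaka-Webster connection preserves $HM$, the vectors $\nabla^M_{e_a} e_a$ are horizontal and recombine into $\tau(f)$, so the divergence theorem on the closed manifold $M$ yields $\frac{d}{dt} E_H(f_t) = -\int_M \langle \tau(f), \partial_t f \rangle\, dV$. Substituting $\partial_t f = \tau(f)$ proves the first formula.

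For the second derivative I differentiate $-\int_M |\partial_t f|^2\, dV$. With $\partial_t f = \tau(f)$ the essential computation is $\nabla^f_{\partial_t} \tau(f)$. Applying the curvature identity $[\nabla^f_{\partial_t}, \nabla^f_{e_a}] W = R^N(\partial_t f, df(e_a)) W$ to $W = df(e_a)$, together with the symmetry $\nabla^f_{\partial_t} df(e_a) = \nabla^f_{e_a} \partial_t f$ used above, gives
\[\nabla^f_{\partial_t} \tau(f) = \Delta_H^{f^*TN}(\partial_t f) + \sum_a R^N(\partial_t f, df(e_a)) df(e_a),\]
where $\Delta_H^{f^*TN}$ denotes the horizontal connection Laplacian on sections of $f^*TN$. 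A second integration by parts converts $\int_M \langle \Delta_H^{f^*TN}(\partial_t f), \partial_t f \rangle$ into $-\int_M |\nabla_H \partial_t f|^2$, and reading the result off yields the claimed identity for $\frac{d^2}{dt^2} E_H(f_t)$. Nonpositive sectional curvature of $N$ gives $\langle R^N(X,Y)Y, X \rangle \leq 0$, so the curvature term contributes nonnegatively and convexity follows.

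For the final assertion, $E_H(f_t)$ is monotone nonincreasing and bounded below by $0$, so it converges as $t \to \infty$ and
\[\int_0^\infty \|\tau(f_t)\|_{L^2}^2\, dt = E_H(f_0) - \lim_{t \to \infty} E_H(f_t) < \infty.\]
Because $\frac{d^2}{dt^2} E_H(f_t) \geq 0$, the function $t \mapsto \|\tau(f_t)\|_{L^2}^2 = -\frac{d}{dt} E_H(f_t)$ is nonincreasing; a nonnegative nonincreasing integrable function on $[0,\infty)$ must tend to $0$, so $\|\tau(f_t)\|_{L^2} \to 0$. The main technical hurdle is the careful bookkeeping in the two commutator plus integration-by-parts computations on $f^*TN$ over the sub-Riemannian base: one must exploit repeatedly that the Tanaka-Webster connection preserves $HM$ so that the $\nabla^M_{e_a} e_a$ contributions stay horizontal and organize into $\tau(f)$, and that the closedness of $M$ kills all boundary terms. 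Beyond this, the derivation parallels the classical Eells-Sampson Riemannian calculation.
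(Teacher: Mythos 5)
Your proposal is correct, and for the first variation \eqref{d1} it follows the paper verbatim: symmetry of $\nabla df$ in $\partial_t$ and a horizontal direction, one integration by parts (using that the Tanaka--Webster connection preserves $HM$), and the flow equation. For the second variation \eqref{d2} you take a genuinely different, and arguably cleaner, route. The paper differentiates $\int_M \la \nabla_H \partial_t f, d_H f\ra$ once more, commutes $\nabla_{\partial_t}$ past $\nabla_H$ to produce the curvature term, and then observes that the leftover term $\int_M \la \nabla_H \nabla_{\partial_t}\partial_t f, d_H f\ra = -\tfrac12 \tfrac{d}{dt}\int_M|\partial_t f|^2 = \tfrac12\tfrac{d^2}{dt^2}E_H(f_t)$ reproduces half of the quantity being computed; solving this self-referential identity is what produces the factor $2$ in \eqref{d2}. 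You instead differentiate $-\int_M|\partial_t f|^2$ directly, which makes the factor $2$ appear immediately, and absorb all the work into the Weitzenb\"ock-type identity $\nabla_{\partial_t}\tau(f)=\Delta_H^{f^*TN}\partial_t f+\mathrm{trace}_{G_\theta}R^N(\partial_t f, d_H f)d_H f$ followed by one integration by parts. The two computations are equivalent, but yours avoids the bootstrapping step and is closer to the classical Eells--Sampson presentation; the paper's version avoids having to name and integrate the connection Laplacian on $f^*TN$. You also supply the argument for the final claim ($\|\tau(f_t)\|_{L^2}\to 0$ via integrability plus the monotonicity of $-\tfrac{d}{dt}E_H$ coming from convexity), which the paper asserts without proof; your argument is the standard and correct one.
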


\begin{proof}
Since $\nabla d f (\partial_t, X) = \nabla d f (X, \partial_t)$ for any spatial vector field $X$, \eqref{d1} follows from
\begin{align*}
\frac{d}{dt} E_H (f_t) = \int_M \la \nabla_{\partial_t} d_H f, d_H f \ra = \int_M \la \nabla_H \partial_t f, d_H f \ra =- \int_M |\partial_t f|^2.
\end{align*}
Hence
\begin{align}
\frac{d^2}{d t^2 } E_H (f_t)= & \int_M \partial_t \la \nabla_H \partial_t f, d_H f \ra = \int_M \partial_t \la \nabla_H \partial_t f, d_H f \ra \nonumber\\
=& \int_M |\nabla_H \partial_t f|^2 + \int_M \la \nabla_{\partial_t} \nabla_H \partial_t f, d_H f  \ra \nonumber \\
=& \int_M |\nabla_H \partial_t f|^2 + \int_M trace_{G_\theta} \la R^N(\partial_t f,  d_H f) \partial_t f, d_H f \ra \nonumber \\
& +\int_M \la \nabla_H \nabla_{\partial_t} \partial_t f, \nabla_H f \ra \label{d3}
\end{align}
Note that
\begin{align*}
\mbox{the last term of \eqref{d3}}=& - \int_M \la  \nabla_{\partial_t} \partial_t f, \tau  (f) \ra \\
=& - \frac{1}{2} \frac{d}{ dt} \int_M |\partial_t f|^2 =\frac{1}{2} \frac{d^2}{d t^2 } E_H (f_t)
\end{align*}
which yields \eqref{d2}.
\end{proof}

\begin{lemma}
Given $t_0 \in (0, \delta)$. Then for any $t \in (t_0, \delta)$, we have
\begin{align}
E_R (f_t ) \leq \frac{1}{2m} \bigg( ||\tau  (f_{t_0})||_{L^2}^2 + C_6 E_H (f_{t_0}) \bigg) + E_R ( f_{t_0} ) \: e^{2m (t_0 -t)} \label{d14}
\end{align}
where $C_6$ only depends on the bounds of the pseudo-Hermitian Ricci curvature and the pseudo-Hermitian torsion.
\end{lemma}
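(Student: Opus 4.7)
The plan is to derive a first-order differential inequality
$$\frac{d}{dt} E_R(f_t) + 2m\, E_R(f_t) \le \|\tau(f_t)\|_{L^2}^2 + C_6\, E_H(f_t)$$
and apply Gr\"onwall on $[t_0,t]$. Gr\"onwall gives
$$E_R(f_t) \le E_R(f_{t_0})\, e^{-2m(t-t_0)} + \tfrac{1}{2m}\sup_{s\in[t_0,t]}\bigl(\|\tau(f_s)\|_{L^2}^2 + C_6 E_H(f_s)\bigr),$$
and from Lemma \ref{dt1} both $E_H(f_s)$ and $\|\tau(f_s)\|_{L^2}^2 = -\tfrac{d}{ds}E_H(f_s)$ are non-increasing in $s$ (the latter because $E_H$ is convex), so the supremum is attained at $s=t_0$, yielding the stated bound.

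To set up the ODE, use $\partial_t f = \tau(f)$ together with the fact that the spatial field $T$ and $\partial_t$ commute on $M\times[0,\delta)$ to get $\partial_t f_0 = \nabla_T \tau(f)$, whence
$$\frac{d}{dt}E_R(f_t) = \int_M \la \nabla_T \tau(f), f_0\ra.$$
Apply the second CR Bochner formula of Lemma \ref{at4} to $\tfrac12 \Delta_H |f_0|^2$ and integrate over the closed $M$ (so $\int_M \Delta_H = 0$). By Lemma \ref{at2} the target-curvature term $\int f^i_0 f^j_\alpha f^k_{\ba} f^l_0 R^N_{ijkl}$ is non-negative and may be dropped, leaving
$$\frac{d}{dt}E_R(f_t) \le -\int_M|\nabla_H f_0|^2 - 2 \int_M \mathcal{A},$$
where $\mathcal A$ collects the four pseudo-Hermitian torsion ($A$) terms of Lemma \ref{at4}. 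The $\mathcal A$-terms carrying $\nabla_H d_H f$ are handled by integration by parts, moving the horizontal derivative onto $f_0\cdot A$; then $\epsilon$-Cauchy-Schwarz absorbs a small fraction of $\int |\nabla_H f_0|^2$ back into the existing negative term, while the remainder is bounded by $C(\|A\|_\infty + \|\nabla A\|_\infty)(E_R + E_H)$.

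The $2m$ factor arises from the Tanaka-Webster torsion identity $T_{\nabla^M}(X, J_bX) = 2 d\theta(X, J_bX)\,T$. In a real orthonormal horizontal frame $\{X_\alpha, X_{m+\alpha} = J_bX_\alpha\}_{\alpha=1}^m$ one has $d\theta(X_\alpha, J_bX_\alpha) = L_\theta(X_\alpha, X_\alpha) = 1$, and applying $df$ together with the pullback identity $(\nabla_X df)(Y) - (\nabla_Y df)(X) = df(T_{\nabla^M}(X,Y))$ yields the pointwise identity
$$2m\, f_0 = \sum_{\alpha=1}^{m}\bigl[(\nabla_{X_\alpha} df)(X_{m+\alpha}) - (\nabla_{X_{m+\alpha}} df)(X_\alpha)\bigr].$$
By Cauchy-Schwarz this gives $2m\,|f_0|^2 \le |\nabla_H d_H f|^2$ pointwise, hence $4m\,E_R(f_t) \le \int_M |\nabla_H d_H f|^2$. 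Integrating the first Bochner formula of Lemma \ref{at4} and using the IBP identity $\int_M \la \nabla_H \tau(f), d_H f\ra = -\|\tau(f)\|_{L^2}^2 + (\text{lower order})$ together with nonpositive target curvature produces an upper bound
$$\int_M |\nabla_H d_H f|^2 \le \|\tau(f)\|_{L^2}^2 + \epsilon \int_M |\nabla_H f_0|^2 + \tfrac{C}{\epsilon}(E_R + E_H).$$
Chaining this with $4m E_R \le \int |\nabla_H d_H f|^2$ converts into a lower bound on $\int |\nabla_H f_0|^2$ which, substituted back into the earlier inequality for $\frac{d}{dt}E_R$, produces exactly the desired differential inequality.

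The main obstacle is the bookkeeping across both Bochner formulas: after assembling the mixed terms $4\ii(f^i_{\ba} f^i_{0\alpha} - f^i_\alpha f^i_{0\ba})$ from the $|d_H f|^2$-Bochner, the $\mathcal A$-terms from the $|f_0|^2$-Bochner, and the pseudo-Hermitian Ricci contributions, the coefficient of $E_R$ on the right-hand side of the final inequality must be strictly sub-leading relative to $2m$ so that the effective Gr\"onwall rate is exactly $2m$. All pseudo-Hermitian torsion and Ricci data are absorbed into the single constant $C_6$ advertised in the statement.
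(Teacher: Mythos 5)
Your skeleton — a differential inequality $\frac{d}{dt}E_R(f_t) + 2m\,E_R(f_t) \le \|\tau(f_{t_0})\|_{L^2}^2 + C_6 E_H(f_{t_0})$, Gr\"onwall, and the monotonicity of $E_H(f_s)$ and of $\|\tau(f_s)\|_{L^2}^2 = -\frac{d}{ds}E_H(f_s)$ supplied by Lemma \ref{dt1} — is exactly the paper's, and so is the key pointwise inequality $2m|f_0|^2 \le |\nabla_H d_H f|^2$ extracted from the antisymmetric part of $\nabla_H d_H f$ on $HM$. But there is a genuine gap precisely at the point you yourself flag as ``the main obstacle.'' Your treatment of the pseudo-Hermitian torsion terms leaves a summand $C(\|A\|_\infty + \|\nabla A\|_\infty)(E_R+E_H)$ on the right-hand side of the inequality for $\frac{d}{dt}E_R$, and the coefficient of $E_R$ there is an arbitrary geometric constant of $M$, not sub-leading relative to $2m$. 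Your proposed rescue — chaining $4mE_R \le \int_M|\nabla_H d_H f|^2$ with the integrated first Bochner formula to manufacture a lower bound on $\int_M|\nabla_H f_0|^2$ — cannot repair this: to make the resulting negative multiple of $E_R$ dominate $C$ you must shrink the Cauchy--Schwarz parameter, which inflates the coefficient of $\|\tau\|^2$ above $1$ and destroys the stated $\frac{1}{2m}\|\tau(f_{t_0})\|_{L^2}^2$ prefactor; keeping that parameter of order one leaves a rate of roughly $4m$ minus an uncontrolled torsion constant, which can drop below $2m$ or even become negative. The missing idea is a cancellation: integrating by parts the terms $f^i_0 f^i_{\bb \ba}A_{\beta\alpha}$ and $f^i_0 f^i_{\beta\alpha}A_{\bb\ba}$ produces $f_0\cdot d_Hf\cdot\nabla A$ contributions that cancel \emph{exactly} against the explicit $f^i_0 f^i_\beta A_{\bb\ba,\alpha}$ and $f^i_0 f^i_{\bb}A_{\beta\alpha,\ba}$ terms of the Bochner formula (this is visible in the paper's intermediate identity \eqref{d4}), so what survives is only of the type $f^i_{0\alpha}f^i_\beta A_{\bb\ba}$, i.e.\ $\nabla_Hf_0\cdot d_Hf\cdot A$; Cauchy--Schwarz then costs only $\epsilon\int_M|\nabla_Hf_0|^2 + C_\epsilon E_H$, with no $E_R$ term at all.

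Once that cancellation is in hand, the paper's route is also more economical than yours: it adds the two Bochner formulas to bound $(\partial_t-\Delta_H)e(f_t)$ for the \emph{total} energy density, integrates, absorbs every cross term into $-\int_M|\nabla_Hf_0|^2$ and $C_6E_H$, and gets $\frac{d}{dt}E(f_t) \le -\int_M|\nabla_Hd_Hf|^2 + C_6E_H(f_t) \le -2mE_R(f_t)+C_6E_H(f_t)$; subtracting $\frac{d}{dt}E_H(f_t) = -\|\tau(f_t)\|_{L^2}^2 \ge -\|\tau(f_{t_0})\|_{L^2}^2$ then yields the differential inequality for $E_R$ directly, with no need for the lower bound on $\int_M|\nabla_Hf_0|^2$ that you try to extract from the first Bochner formula (and which is not a genuine Poincar\'e-type inequality).
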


\begin{proof}
Lemma \ref{at4} implies that
\begin{align*}
(\partial_t - \Delta_H) e(f_t) \leq& - |\nabla_H d_H f|^2 - | \nabla_H f_0 |^2 - 4 \ii (f^i_{\ba} f^i_{0 \alpha } - f^i_\alpha f^i_{0  \ba} ) \\
& + 2 R^M_{\alpha \bb} f^i_{\ba} f^i_{\beta} - 2\ii (n-2) (f^i_\alpha f^i_\beta A_{\ba \bb}- f^i_{\ba} f^i_{\bb} A_{\alpha \beta}  ) \\
& - 2 (f^i_0 f^i_\beta A_{\bb \ba, \alpha} + f^i_0 f^i_{\bb} A_{\beta \alpha, \ba} + f^i_0 f^i_{\bb \ba} A_{\beta \alpha } + f^i_0 f^i_{\beta \alpha } A_{\bb \ba} ).
\end{align*}
Integrating on $M$ and using the divergence theorem, the result is
\begin{align}
\frac{d}{d t} E(f_t) \leq& - \int_M |\nabla_H d_H f|^2 - \int_M | \nabla_H f_0 |^2- 4 \ii \int_M (f^i_{\ba} f^i_{0 \alpha } - f^i_\alpha f^i_{0  \ba} )  \nonumber \\
& + 2 \int_M R^M_{\alpha \bb} f^i_{\ba} f^i_{\beta} - 2\ii (n-2) \int_M (f^i_\alpha f^i_\beta A_{\ba \bb}- f^i_{\ba} f^i_{\bb} A_{\alpha \beta}  ) \nonumber \\
& + 2 \int_M (f^i_{0 \alpha} f^i_\beta A_{\bb \ba} + f^i_{0  \ba} f^i_{\bb} A_{\beta \alpha} ). \label{d4}
\end{align}
By Cauchy inequality, \eqref{d4} becomes
\begin{align}
\frac{d}{d t} E(f_t) \leq - \int_M |\nabla_H d_H f|^2 + C_{6} E_H (f_t). \label{d5}
\end{align}
On the other hand, the commutate relation (cf. \cite{greenleaf1985first,lee1988psuedo,Ren201447})
\begin{align}
f^i_{\alpha \bb} -f^i_{\bb \alpha}  = & 2 \ii f^i_0 \delta_{\alpha \bb} \label{d13}
\end{align}
implies that
\begin{align}
|\nabla_H d_H f|^2 =& 2 \sum_{\alpha, \beta=1}^m (f^i_{\ba \beta} f^i_{\alpha \bb} + f^i_{\alpha \beta} f^i_{\ba \bb}  ) \geq 2 \sum_{\alpha=1}^m f^i_{\alpha \ba} f^i_{\ba \alpha} \nonumber \\
= & \frac{1}{2} \sum_{\alpha=1}^m \big[ |f^i_{\alpha \ba} +f^i_{\ba \alpha} |^2 + |f^i_{\alpha \ba} - f^i_{\ba \alpha} |^2  \big] \geq \frac{1}{2} \sum_{\alpha=1}^m |f^i_{\alpha \ba} -f^i_{\ba \alpha} |^2  \nonumber \\
=& 2m |f_0|^2.
\end{align}
Hence
\begin{align}
\frac{d}{d t} E(f_t) \leq - 2m E_R (f_t) + C_{6} E_{H} (f_t). \label{d6}
\end{align}
Lemma \ref{dt1} guarantees that $\frac{d}{dt} E_H (f_t)$ is increasing and nonpositive. Hence
\begin{align*}
0 \geq \frac{d}{dt} E_H (f_t)  \geq \frac{d}{dt} E_H (f_{t_0}) = - ||\tau  (f_{t_0})||_{L^2}^2.
\end{align*}
By using Lemma \ref{dt1} again, \eqref{d6} becomes
\begin{align}
\frac{d}{d t} E_R (f_t) + 2m E_R(f_t) \leq ||\tau  (f_{t_0})||_{L^2}^2 + C_{6} E_{H} (f_0) .
\end{align}
which implies \eqref{d14}.
\end{proof}

Take $t_0 = \frac{T_0}{2}$ where $T_0$ is given by Theorem \ref{bt6}. The estimates \eqref{c2} and \eqref{b9} implies the $C^2$-norm of the solution given by Theorem \ref{bt5}. Then we have the uniform estimate of the total energy $E (f_t)$.

\begin{lemma}
Let $f: M \times (0, \delta) \rightarrow N$ be the solution of the pseudo-harmonic heat flow with the initial condition $\phi = (\phi^1, \dots, \phi^K)$. Then for any $t \in ( \frac{T_0}{2}, \delta)$,
\begin{align}
E (f_t) \leq C_7 \bigg( ||d \phi||_{C^0}^2 + \sum_a ||\phi^a||_{C^0}^2 \bigg)  \label{d10}
\end{align}
where $\delta$ is the maximal time and $C_7$ only depends on the upper bound of $||d \phi||_{C^0}$.
\end{lemma}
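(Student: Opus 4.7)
The plan is to combine the monotonicity of $E_H$ from Lemma \ref{dt1} with the inequality \eqref{d14} evaluated at the instant $t_0 = T_0/2$, and to control the three quantities that appear on the right-hand side of \eqref{d14} by a classical $C^1$-estimate for $u = \iota \circ f$ obtained from the interior regularity estimate \eqref{c2}. First, by Lemma \ref{dt1} the horizontal energy is nonincreasing, hence
\begin{equation*}
E_H(f_t) \;\le\; E_H(f_0) \;\le\; \tfrac{1}{2}\operatorname{vol}(M)\,\|d\phi\|_{C^0}^2
\end{equation*}
for every $t \in (0,\delta)$; this already handles the $E_H$-contribution to $E(f_t)$ and dominates the term $C_6 E_H(f_{t_0})$ inside \eqref{d14}. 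For $t \ge t_0 = T_0/2$ the factor $e^{2m(t_0-t)}$ in \eqref{d14} is at most one, so it remains only to bound $\|\tau(f_{t_0})\|_{L^2}^2$ and $E_R(f_{t_0})$ at the single time $t_0$.

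Both quantities are controlled by the classical $C^1$-norm of $u$ near $M\times\{t_0\}$. Because $\iota$ is an isometric embedding and $f$ solves \eqref{a5}, one has $|\tau(f_{t_0})|^2 = |\partial_t f|^2 \le K\sum_a (\partial_t u^a)^2$, while $|f_0|^2 = |df(T)|^2 \le K\sum_a (Tu^a)^2$; after integration over $M$ both sides are bounded by $\operatorname{vol}(M)\,\|u\|_{C^1}^2$ times a universal constant. To obtain the $C^1$-bound I apply the estimate \eqref{c2} of Theorem \ref{ct1} with $\gamma = 1$, $U = M\times(T_0/4,\,3T_0/4)$, and $V$ any precompact space--time neighborhood of $M\times\{t_0\}$ inside $U$, which yields
\begin{equation*}
\sum_a \|u^a\|_{C^1(V)} \;\le\; C_{U,V}\Bigl(\sum_a \|u^a\|_{C^0(U)} + \sum_a \|\nabla_H u^a\|_{C^0(U)}\Bigr),
\end{equation*}
where $C_{U,V}$ depends on $U$ and $V$, hence on $T_0$, and therefore only on the upper bound of $\|d\phi\|_{C^0}$.

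The two $C^0$ inputs on the right-hand side are furnished by Theorem \ref{bt6}: estimate \eqref{b9} gives $\sum_a\|\nabla_H u^a\|_{C^0(U)} \le 2Ke^{C_1}\|d\phi\|_{C^0}$, while the pointwise estimates \eqref{b10} and \eqref{b12} from the short-time construction, combined with $\int_M H(p,q,t)\,dV_q = 1$, yield $\sum_a\|u^a\|_{C^0(U)} \le \sum_a\|\phi^a\|_{C^0} + C\,\|d\phi\|_{C^0}^2$ with $C$ depending only on an upper bound of $\|d\phi\|_{C^0}$. Squaring, inserting into the previous display, and substituting into \eqref{d14}, then adding the $E_H$ estimate from the first step, produces exactly \eqref{d10}. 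The only real obstacle is confirming that the Folland--Stein interior constant $C_{U,V}$ can be taken to depend only on $T_0$ (and thus only on the upper bound of $\|d\phi\|_{C^0}$); once this is granted, everything else is bookkeeping of the various constants.
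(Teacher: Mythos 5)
Your proposal is correct and follows essentially the same route the paper takes: the paper's (one-sentence) justification is precisely to set $t_0=\frac{T_0}{2}$, invoke the monotonicity of $E_H$ and the decay inequality \eqref{d14}, and bound $\|\tau(f_{t_0})\|_{L^2}^2$ and $E_R(f_{t_0})$ via the interior regularity estimate \eqref{c2} fed with the short-time bounds \eqref{b9}, \eqref{b10}, \eqref{b12}. Your version merely fills in the bookkeeping (the paper quotes a $C^2$-bound where you use a $C^1$-bound together with the flow equation $\tau(f)=\partial_t f$, which amounts to the same thing), and your closing caveat about $C_{U,V}$ depending only on $T_0$ is exactly the dependence the paper asserts.
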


Lemma \ref{at2} shows that the total energy density $e(f_t)$ satisfies the subelliptic parabolic inequality
\begin{align}
(\Delta_H - \partial_t ) e(f_t) \geq - C_1 e(f_t). \label{d7}
\end{align}
In \cite{moser1964harnack}, Moser gave his famous method, called Moser Iteration, to estimate the $L^\infty$-norm by the $L^2$-norm for the positive subsolution of the elliptic parabolic version of \eqref{d7}. The only ingredient of Moser Iteration is Sobolev inequality. The CR version of Sobolev inequality is also valid.

\begin{theorem}[Theorem 3.13 in \cite{dragomir2006differential} and \cite{folland1974estimates}]
$S^2_1 (\Delta_H, M) \subset L^{\frac{2m +2}{m}} (M)$.
\end{theorem}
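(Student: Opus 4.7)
The plan is to deduce the Sobolev embedding from a pointwise representation of $u$ combined with a fractional integration inequality on the CC space $(M, d, dV)$. The key structural fact I would invoke is Lemma \ref{bt1}: that space is of homogeneous type with homogeneous dimension $Q = 2m + 2$, and the critical exponent $\tfrac{2Q}{Q-2} = \tfrac{2m+2}{m}$ agrees exactly with what one expects from Sobolev scaling with respect to $Q$. I would first establish the inequality for $u \in C^\infty(M)$ and then pass to the general $u \in S^2_1(\Delta_H, M)$ by density in the Folland--Stein norm.

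The first step is to build the Green kernel $G_\epsilon(p,q)$ of $-\Delta_H + \epsilon$, with $\epsilon > 0$ large enough that the operator is invertible on the closed manifold $M$, via
\begin{align*}
G_\epsilon(p,q) = \int_0^{\infty} e^{-\epsilon t} H(p,q,t)\, dt.
\end{align*}
Splitting the integral at $t = 1$ and inserting the heat-kernel bounds \eqref{b21} and \eqref{b4}, together with the exponential decay of the semigroup away from the constants for large $t$, shows
\begin{align*}
G_\epsilon(p,q) \leq C\, d(p,q)^{-(Q-2)} \quad \text{and} \quad |\nabla_{H,p} G_\epsilon(p,q)| \leq C\, d(p,q)^{-(Q-1)}
\end{align*}
for $d(p,q) \leq 1$, with uniform bounds when $d(p,q) \geq 1$. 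Representing $u$ through $G_\epsilon$ and integrating by parts in the horizontal gradient then produces the pointwise estimate
\begin{align*}
|u(p)| \leq C \int_M d(p,q)^{-(Q-1)} \bigl( |\nabla_H u(q)| + |u(q)| \bigr) dV_q.
\end{align*}

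The second step is to invoke the Hardy--Littlewood--Sobolev inequality for the Riesz-type potential $I_1 v(p) := \int_M d(p,q)^{-(Q-1)} v(q)\, dV_q$ on the space of homogeneous type $(M, d, dV)$: one has $\|I_1 v\|_{L^{p^*}} \leq C \|v\|_{L^p}$ whenever $\tfrac{1}{p^*} = \tfrac{1}{p} - \tfrac{1}{Q}$. This is the standard dyadic-shell argument whose only input is the volume doubling $|B(p, 2r)| \leq C |B(p, r)|$ supplied by Lemma \ref{bt1}. Taking $p = 2$ and $v = |\nabla_H u| + |u|$ yields $p^\ast = (2m+2)/m$ and
\begin{align*}
\|u\|_{L^{(2m+2)/m}(M)} \leq C \bigl( \|\nabla_H u\|_{L^2(M)} + \|u\|_{L^2(M)} \bigr) = C \|u\|_{S^2_1(\Delta_H, M)},
\end{align*}
which is the claimed embedding.

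The main obstacle I expect is the fractional integration step: one must verify that $(M, d, dV)$ with the CC distance really behaves as a space of homogeneous type of dimension $Q = 2m + 2$, not merely locally through the Nagel--Stein--Wainger estimate but globally, which is where closedness of $M$ is used, and then transport the classical Hardy--Littlewood--Sobolev machinery to this setting. Folland and Stein \cite{folland1974estimates} carried this out on the Heisenberg group, and the general pseudo-Hermitian case reduces to theirs through the Rothschild--Stein lifting theorem \cite{rothschild1976hypoelliptic} together with a finite partition of unity. The kernel bounds on $G_\epsilon$ are a secondary technical ingredient but follow cleanly from the heat-kernel estimates \eqref{b4} and \eqref{b21} already recorded in the paper.
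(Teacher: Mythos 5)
The paper does not actually prove this statement: it is imported verbatim as Theorem 3.13 of Dragomir--Tomassini together with Folland--Stein, and is used as a black box to run the Moser iteration in Lemma \ref{dt2}. So there is no in-paper argument to compare against; what you have written is a reconstruction of the standard potential-theoretic proof of the cited result, and as an outline it is sound. The exponent arithmetic is right: with homogeneous dimension $Q=2m+2$ from Lemma \ref{bt1}, the bounds \eqref{b21} and \eqref{b4} integrate in $t$ to give $G_\epsilon(p,q)\leq C\,d(p,q)^{-(Q-2)}$ and a gradient bound of order $d(p,q)^{-(Q-1)}$ (split the $t$-integral at $t=d(p,q)^2$), and $1/p^{*}=1/2-1/Q$ gives $p^{*}=(2m+2)/m$. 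Two technical points deserve explicit care if this were to be written out. First, the heat-kernel estimates recorded in the paper hold only for $t\in(0,1]$; for the Laplace-transform definition of $G_\epsilon$ you need a uniform bound on $H(p,q,t)$ for $t\geq 1$, which follows from the semigroup property together with \eqref{b3} (and any $\epsilon>0$ suffices for invertibility, since $-\Delta_H$ is nonnegative self-adjoint on the closed manifold $M$; no largeness is needed). Second, the integration by parts in $u(p)=\int_M G_\epsilon(p,q)\,(-\Delta_H+\epsilon)u(q)\,dV_q$ puts the horizontal derivative on $G_\epsilon$ in the $q$-variable, so the bound you need is on $\nabla_{H,q}G_\epsilon$; this is recovered from \eqref{b4} by the symmetry $H(p,q,t)=H(q,p,t)$ of the heat kernel of the self-adjoint operator $\Delta_H$. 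With those repairs the fractional-integration step is the routine dyadic-shell argument on the doubling space $(M,d,dV)$, and the density of $C^\infty(M)$ in the Folland--Stein norm closes the proof. Your approach is therefore correct and is essentially the original Folland--Stein route that the paper is citing, rather than a new one.
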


Hence we can repeat Moser Iteration and obtain the next lemma.

\begin{lemma} \label{dt2}
Assume that $\psi \in  C^\infty ( M \times (0, \delta) )$ is nonnegative and satisfies
\begin{align}
(\Delta_H -\partial_t ) \psi \geq 0. \label{d8}
\end{align}
Then for any $\epsilon \in (0, \delta)$ and $t \in [\epsilon, \delta )$, we have
\begin{align}
\psi (p, t) \leq C_\epsilon \int_{t- \epsilon}^t \int_M \psi (q, s) d V_q ds,  \label{d9}
\end{align}
where $C_\epsilon $ only depends on $\epsilon$.
\end{lemma}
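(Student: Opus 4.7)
The plan is to adapt Moser's classical iteration to the subelliptic parabolic inequality \eqref{d8}, using the CR Sobolev embedding stated above. Because $M$ is closed, no spatial cutoff is required; only a time cutoff is needed. Fix $t \in [\epsilon,\delta)$, let $q \geq 2$, and pick a smooth time cutoff $\eta(s)$ that vanishes at $\tau_1$ and equals $1$ on $[\tau_2, t]$ for some $t - \epsilon < \tau_1 < \tau_2 \leq t$. Multiplying $(\Delta_H - \partial_s)\psi \geq 0$ by the nonnegative test function $\psi^{q-1} \eta^2$, integrating over $M \times [\tau_1, t']$ for each $t' \in [\tau_2, t]$, and integrating by parts (no spatial boundary term, since $M$ is closed) yields the energy estimate
\begin{align*}
\frac{4(q-1)}{q^2} \int_{\tau_2}^{t}\!\!\int_M \big|\nabla_H \psi^{q/2}\big|^2 \, dV \, ds \;+\; \sup_{t' \in [\tau_2, t]} \frac{1}{q}\int_M \psi^q(\cdot, t') \, dV \;\leq\; \frac{C}{q(\tau_2 - \tau_1)} \int_{\tau_1}^{t}\!\!\int_M \psi^q \, dV \, ds.
\end{align*}

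Next, apply the CR Sobolev embedding $S^2_1(\Delta_H, M) \hookrightarrow L^{2\kappa}(M)$ with $\kappa = (m+1)/m$ to $\psi^{q/2}$ at each time slice, and interpolate (via H\"older in time) between the $L^\infty$-in-time bound on $\|\psi^{q/2}\|_{L^2(M)}$ and the $L^2$-in-time bound on $\|\psi^{q/2}\|_{L^{2\kappa}(M)}$ provided by the previous step. This gives a reverse H\"older inequality
\begin{align*}
\left( \int_{\tau_2}^{t}\!\!\int_M \psi^{q \kappa_{\ast}} \, dV\, ds \right)^{1/\kappa_{\ast}} \;\leq\; \frac{C\, q^2}{\tau_2 - \tau_1} \int_{\tau_1}^{t}\!\!\int_M \psi^q \, dV\, ds,
\end{align*}
where $\kappa_{\ast} = 1 + 1/(m+1) > 1$ is the parabolic improvement factor. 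Iterating with $q_k = 2\kappa_{\ast}^k$ and a geometric sequence of nested intervals $[\tau_1^{(k)}, t] \searrow [t - \epsilon/2, t]$, the product $\prod_k (C q_k^2 / (\tau_2^{(k)} - \tau_1^{(k)}))^{1/q_k}$ converges and a limit argument produces the $L^\infty$--$L^2$ mean value bound
\begin{align*}
\sup_{M \times [t - \epsilon/2, t]} \psi \;\leq\; C(\epsilon) \left( \int_{t-\epsilon}^{t}\!\!\int_M \psi^2 \, dV\, ds \right)^{1/2}.
\end{align*}

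To upgrade the right-hand side from $L^2$ to $L^1$, apply the preceding estimate on a telescoping family of cylinders $Q_k = M \times [t - \epsilon_k, t]$ with $\epsilon_k \downarrow \epsilon/2$, and use the trivial interpolation $\|\psi\|_{L^2(Q_k)}^2 \leq \|\psi\|_{L^\infty(Q_k)} \|\psi\|_{L^1(Q_k)}$. Setting $J_k = \|\psi\|_{L^\infty(Q_{k+1})}$ and $I = \|\psi\|_{L^1(M \times [t-\epsilon, t])}$, one obtains a recursion $J_k \leq A_k J_{k-1}^{1/2} I^{1/2}$ with summable $\log A_k / 2^k$, which Young's inequality absorbs to give $J_\infty \leq C_\epsilon I$. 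Evaluating at the top point $(p,t)$ (permissible since $t < \delta$, so $\psi$ extends smoothly slightly beyond $t$, or alternatively by continuity up to the top slice) yields \eqref{d9}.

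The main obstacle is structural rather than computational: one has to check that the CR Sobolev inequality in the Folland--Stein scale interacts with the $L^\infty$-in-time control produced by the parabolic term exactly as in the classical Euclidean Moser scheme. This works because the only spatial derivatives appearing after integration by parts are \emph{horizontal}, which is precisely what the CR Sobolev embedding controls; no derivative in the Reeb direction $T$ enters the iteration. Consequently the proof is structurally identical to Moser's, with $\nabla$ replaced by $\nabla_H$ and the Euclidean Sobolev exponent replaced by $\kappa = (m+1)/m$.
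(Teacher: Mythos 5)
Your proposal is correct and is exactly the route the paper intends: the paper gives no written proof of this lemma, saying only that one can ``repeat Moser Iteration'' using the CR Sobolev embedding $S^2_1(\Delta_H, M) \subset L^{(2m+2)/m}(M)$, and your argument is precisely that iteration carried out in detail (homogeneous dimension $Q = 2m+2$, hence $\kappa = (m+1)/m$ and parabolic improvement factor $1 + 1/(m+1)$), followed by the standard $L^2$-to-$L^1$ upgrade. Your observation that only horizontal derivatives survive the integration by parts, so the Folland--Stein Sobolev inequality is the only ingredient needed, is the correct justification for why the classical scheme transfers verbatim.
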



Now we come back to \eqref{d7}. It can be rewritten as
\begin{align*}
(\Delta_H - \partial_t ) ( e^{-C_1 t} e (f_t) ) \geq 0,
\end{align*}
which, by Lemma \ref{dt2} and choosing $\epsilon = \frac{T_0}{2}$, implies that for any $t \in [ \frac{T_0}{2} , \delta)$,
\begin{align*}
e (f_t) (p, t) \leq  C_\epsilon C_1^{-1} e^{ \frac{1}{2} C_1 T_0 } \sup_{s \in [t- \frac{T_0}{2}, t]} E(f_s).
\end{align*}
By \eqref{d10}, we conclude that for any $t \in (\frac{T_0}{2}, \delta)$,
\begin{align}
|| d f_t ||_{C^0} \leq \tilde{C}_8 \bigg( ||d \phi||_{C^0} + \sum_a ||\phi^a||_{C^0} \bigg) \label{d11}
\end{align}
where $C_8$ only depends on the upper bound of $||d \phi||_{C^0}$.

\begin{theorem} \label{dt3}
Suppose that $(M, HM, J_b, \theta)$ is a closed pseudo-Hermitian manifold and $(N, h)$ is a compact Riemannian manifold. For any $\phi \in C^\infty (M, N)$, the pseudo-harmonic heat flow
\begin{align}
\frac{\partial f}{\partial t}  = \tau (f)
\end{align}
with the initial condition
\begin{align}
f (p, 0) = \phi (p)
\end{align}
has a unique solution $f \in C^\infty (M \times (0, +\infty), N)  \cap C^0 (M \times [ 0, +\infty ),  N)$. Moreover, we have the estimates
\begin{align}
\sup_{(0, +\infty)} || d_H f_t ||_{C^0} \leq C_8 \bigg( ||d \phi||_{C^0} + \sum_a ||\phi^a||_{C^0} \bigg),
\end{align}
and
\begin{align}
\sup_{[ \frac{T_0}{2}, + \infty )} ||  f_t ||_{C^\gamma} \leq C_\gamma \bigg( ||d \phi||_{C^0} + \sum_a ||\phi^a||_{C^0} \bigg), \label{d12}
\end{align}
where $C_8$ only depends on the upper bound of $||d \phi||_{C^0}$, $C_\gamma$ depends on $\gamma$ and $t_0$.
Thus there exists a sequence $t_n$ and a pseudo-harmonic map $f_\infty: M \rightarrow N$ such that $f_{t_n} \rightarrow f_\infty$ in $C^\infty (M, N) $ as $t_n \rightarrow +\infty$.
\end{theorem}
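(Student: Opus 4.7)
The plan is to rule out finite-time blow-up using the a priori $C^0$ bound on $df_t$ proved in (\ref{d11}), upgrade to a uniformly smooth family via the hypoelliptic regularity of Theorem \ref{ct1}, and then extract a subsequential $C^\infty$-limit whose tension field vanishes thanks to the $L^2$-decay supplied by Lemma \ref{dt1}.

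For long-time existence, let $\delta \in (0, \infty]$ be the maximal time from Theorem \ref{bt5}. On $[0, T_0]$ the short-time estimate (\ref{b9}) controls $\|\nabla_H u^a\|_{C^0}$ in terms of $\|d\phi\|_{C^0}$, and for $t \in [T_0/2, \delta)$ the estimate (\ref{d11}) bounds the full differential $\|df_t\|_{C^0}$ by a constant depending only on $\|d\phi\|_{C^0}$ and $\sum_a \|\phi^a\|_{C^0}$. Concatenating, $\|df_t\|_{C^0}$ is uniformly bounded on $[0, \delta)$, contradicting the blow-up alternative $\liminf_{t\to\delta}\|df(\cdot, t)\|_{C^0} = +\infty$ unless $\delta = +\infty$. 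To obtain the $C^\gamma$ bound (\ref{d12}) I would apply Theorem \ref{ct1} to $u = \iota \circ f$ on nested cylinders $V \Subset U \Subset M \times (T_0/4, +\infty)$ whose shape is $t$-translation invariant; the interior estimate (\ref{c2}) then controls $\|u^a\|_{C^\gamma(V)}$ by $\|u^a\|_{C^0(U)} + \|\nabla_H u^a\|_{C^0(U)}$, each of which is $t$-uniformly bounded by the previous step together with the compactness of $\iota(N)$.

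For the subsequential convergence, Lemma \ref{dt1} gives $\varphi(t) := \|\tau(f_t)\|_{L^2}^2 = -\frac{d}{dt} E_H(f_t)$, which is nonincreasing by the convexity identity \eqref{d2} and satisfies $\int_0^\infty \varphi(t)\,dt \leq E_H(\phi) < \infty$. A nonincreasing nonnegative integrable function must tend to zero, so $\|\tau(f_t)\|_{L^2} \to 0$. Given any sequence $t_n \to +\infty$, the uniform $C^\gamma$-bounds together with Arzel\`a--Ascoli and a diagonal argument extract a subsequence $f_{t_{n_k}} \to f_\infty$ in $C^\infty(M, N)$; since $\tau$ depends continuously on the $2$-jet, $\tau(f_{t_{n_k}}) \to \tau(f_\infty)$ in $C^0(M)$, and combined with the $L^2$-vanishing this forces $\tau(f_\infty) \equiv 0$, so $f_\infty$ is pseudo-harmonic.

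The main obstacle, already surmounted by the lemmas in this section, is guaranteeing that the constants in (\ref{d11}) and (\ref{d12}) do not deteriorate as $t \to \infty$: this rests on the $t$-independent total-energy bound (\ref{d10}), which uses the nonpositivity of the sectional curvature of $N$ through the Bochner inequality of Lemma \ref{at2}, and on the fact that the Moser iteration in Lemma \ref{dt2} invokes only a fixed-length time window of length $\epsilon$. Once these $t$-independent bounds are in hand, the rest of the argument is essentially formal.
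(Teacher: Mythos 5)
Your proposal is correct and follows essentially the same route as the paper: rule out finite-time blow-up by combining the gradient bound \eqref{d11} with the blow-up alternative of Theorem \ref{bt5}, obtain uniform $C^\gamma$ bounds from the interior hypoelliptic estimate \eqref{c2}, and conclude by Arzel\`a--Ascoli, with the pseudo-harmonicity of the limit coming from the $L^2$-decay of $\tau(f_t)$ in Lemma \ref{dt1}. Your write-up merely makes explicit two points the paper leaves implicit (the monotone-plus-integrable argument for $\|\tau(f_t)\|_{L^2}\to 0$ and the passage from $L^2$-vanishing to $\tau(f_\infty)\equiv 0$), and correctly flags that the curvature hypothesis on $N$ is silently used through \eqref{d10}.
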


\begin{proof}
Since $\delta$ is the maximal time, it must be $+ \infty$ by Theorem \ref{bt5}. Due to \eqref{c2} and \eqref{d11}, the derivatives of order $\leq k$ of the solution $f$ are uniformly bounded. The estimate \eqref{d12} follows from \eqref{c2}. Then we complete the proof by the Arzel\`a-Ascoli theorem.
\end{proof}

\section{Homotopy Class of Pseudo-Harmonic Maps} \label{s5}
In this section, we firstly deduce that the solution of the pseudo-harmonic heat flow is smoothly dependent of the initial data. As a consequence, the solution $f_t$ in Theorem \ref{dt3} is uniformly convergent to $f_\infty$. Secondly, we consider that the target manifold $(N, h)$ has negative sectional curvature. In this case, if the image of the pseudo-harmonic map $f_\infty $ is neither a point nor a closed geodesic, then it is the unique pseudo-harmonic map in the homotopic class of $f_\infty $.

Suppose that $(M, HM, J_b, \theta)$ is a closed pseudo-Hermitian manifold and $(N, h)$ is a compact Riemannian manifold. Let $\phi \in C^\infty (M \times I, N)$ where $I$ is an open domain in $\mathbb{R}$. By Theorem \ref{dt3}, for any fixed $s \in I$, the pseudo-harmonic flow
\begin{align}
\frac{\partial K}{\partial t}  = \tau (K)
\end{align}
with the initial condition
\begin{align}
K (p, 0; s) = \phi (p; s)
\end{align}
has a unique solution $K(p, t; s) \in C^\infty (M \times (0, +\infty), N)  \cap C^0 (M \times [ 0, +\infty ),  N)$.

\begin{lemma} \label{e1}
$K \in C^\infty (M \times (0, +\infty) \times I, N) $.
\end{lemma}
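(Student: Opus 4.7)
The strategy is to work in the Nash-embedded formulation $u(p, t; s) = \iota \circ K(p, t; s) \in \mathbb{R}^K$, which by the equivalence of Theorem \ref{at3} solves the subelliptic parabolic system \eqref{b17} with initial datum $\iota \circ \phi(\cdot; s)$, and to extract smoothness in $s$ by iterating the hypoelliptic $L^p$ regularity of Section \ref{s3} on the $s$-difference quotients of $u$. Theorem \ref{dt3} together with \eqref{d11}--\eqref{d12} provides uniform $C^\infty_{\mathrm{loc}}(M \times (0, \infty))$ bounds on $u(\cdot, \cdot; s)$ as $s$ varies in any compact sub-interval $J \Subset I$, since the only $s$-dependent quantities controlling those bounds are $\|d\phi(\cdot; s)\|_{C^0}$ and $\|\phi(\cdot; s)\|_{C^0}$, both continuous in $s$.

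\textbf{Lipschitz dependence on $s$.} For $s_1, s_2 \in J$, set $w = \sum_a (u^a(\cdot,\cdot;s_1) - u^a(\cdot,\cdot;s_2))^2$. Repeating the computation in the proof of Theorem \ref{bt7} yields $(\Delta_H - \partial_t) w \geq -Cw$ with $C$ uniform over $J$ and any compact time interval, because both $\nabla_H u^a$ and the derivatives of $P$ are uniformly bounded. Lemma \ref{bt2} then gives $\sup_{M \times [0, T]} w \leq e^{CT} \sup_M w(\cdot, 0) \leq C' |s_1 - s_2|^2$ by smoothness of $\phi$ in $s$.

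\textbf{Differentiability and smoothness in $s$.} Set $v^a_h = h^{-1}(u^a(\cdot,\cdot;s+h) - u^a(\cdot,\cdot;s))$. Taylor-expanding $P^a_{bc}(u)$ and the quadratic form $\la \nabla_H u^b, \nabla_H u^c\ra$ along the segment from $u(\cdot,\cdot;s)$ to $u(\cdot,\cdot;s+h)$ shows that $v^a_h$ satisfies a linear subelliptic parabolic system of schematic form
\begin{align*}
(\Delta_H - \partial_t) v^a_h = \sum_b A^a_b(p,t;s,h)\, v^b_h + \sum_{b, X} B^{a, X}_b(p,t;s,h)\, X v^b_h,
\end{align*}
with $X$ ranging over a local horizontal orthonormal frame, and $A, B$ smooth functions of $u(\cdot,\cdot;s), u(\cdot,\cdot;s+h)$ and their first horizontal derivatives, hence uniformly bounded in $C^\infty_{\mathrm{loc}}$ as $h \to 0$. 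The previous step bounds $v^a_h$ uniformly in $C^0$. Applying Theorem \ref{ct2} and the Folland-Stein to $C^{k,\alpha}$ embedding of Theorem \ref{ct3} yields uniform $C^{k,\alpha}_{\mathrm{loc}}$ bounds on $v^a_h$ for every $k$. An Arzel\`a-Ascoli diagonal extraction gives a $C^\infty_{\mathrm{loc}}$ subsequential limit $V^a$ solving the formally $s$-differentiated linear system with initial datum $\iota_* \partial_s \phi(\cdot; s)$; the maximum principle for that linear system forces $V^a = \partial_s u^a$, so $\partial_s u$ exists and lies in $C^\infty_{\mathrm{loc}}(M \times (0,\infty))$ with Lipschitz dependence on $s$.

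Iterating, each $\partial_s^k u$ satisfies a linear subelliptic parabolic equation whose coefficients are smooth expressions in $u, \partial_s u, \ldots, \partial_s^{k-1} u$ and their horizontal derivatives, so by induction $\partial_s^k u \in C^\infty_{\mathrm{loc}}(M \times (0,\infty))$ with smooth dependence on $s$. Combining smoothness in $(p,t)$ uniform in $s$ with smoothness in $s$ uniform in $(p,t)$ yields $u \in C^\infty(M \times (0,\infty) \times I, \mathbb{R}^K)$, and hence $K = \iota^{-1}\circ u \in C^\infty(M \times (0,\infty) \times I, N)$. The main obstacle is the passage $h \to 0$ in the difference-quotient step: because the linearized coefficients themselves depend on $h$, one must ensure that uniform-in-$h$ Folland-Stein estimates survive the limit. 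This is handled by noting that the coefficients converge in $C^\infty_{\mathrm{loc}}$ as $h \to 0$ (since $u$ is Lipschitz in $s$ and smooth in $(p,t)$), so the hypoelliptic bounds of Theorem \ref{ct2} are preserved under the passage to the limit.
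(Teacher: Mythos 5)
Your overall strategy matches the paper's: establish Lipschitz dependence of $u$ on $s$, pass to difference quotients, identify their limit with the solution of the formally $s$-differentiated linear system, iterate in $k$, and combine separate smoothness in $(p,t)$ and in $s$ into joint smoothness. (For that last combination the paper invokes Lemma~6.2 of Rothschild--Stein, which you use implicitly; that is a citation issue rather than a gap.) However, there is a genuine gap at the heart of your regularity step for the difference quotients.

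You claim that a uniform $C^0$ bound on $v^a_h$, together with Theorem~\ref{ct2} and the embedding of Theorem~\ref{ct3}, yields uniform $C^{k,\alpha}_{\mathrm{loc}}$ bounds. This does not work as stated: the linearized equation has the form $(\Delta_H - \partial_t)v^a_h = A v_h + B\cdot\nabla_H v_h$, so the right-hand side contains first horizontal derivatives of the unknown. Theorem~\ref{ct2} requires the right-hand side to lie in $S^p_k(\mathcal{L}_t)$ with $k\geq 1$ before it returns $S^p_{k+2}$ control of $v_h$; to even place the right-hand side in $L^p$ you already need a uniform bound on $\nabla_H v_h$, which is exactly what you are trying to produce. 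The bootstrap therefore has no starting point from a $C^0$ bound alone. (A Caccioppoli-type energy estimate could in principle supply $\nabla_H v_h\in L^2_{\mathrm{loc}}$ uniformly and restart the iteration, but you neither state nor prove such an estimate.) The paper closes precisely this hole by proving the gradient bound directly: for small times it uses the Duhamel representation together with the heat-kernel gradient estimate of Lemma~\ref{bt3} to control $\sum_a(|v^a_s|+|\nabla_H v^a_s|)$, and for $t\geq T_3/2$ it propagates the bound on $\sum_a(|v^a_s|^2+|\nabla v^a_s|^2)$ via the Bochner inequality \eqref{a9} and the maximum principle. Your Lipschitz step, which only controls $w=\sum_a(u^a(s_1)-u^a(s_2))^2$ in $C^0$ via the argument of Theorem~\ref{bt7}, is correct but insufficient input for the next stage; you need the analogous statement for the horizontal gradient of the difference before any hypoelliptic bootstrap can begin.
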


According to Theorem \ref{dt3}, all derivative of $K$ along the spatial vector fields are inner-closed uniformly bounded; so are the derivative of the time variable. Hence, by the next lemma, it suffices to prove $\partial_s^k K$ are inner-closed uniformly bounded.


\begin{lemma}[Lemma 6.2 in \cite{rothschild1976hypoelliptic}]
Suppose that a complex valued function $ (\gamma, x) \rightarrow F(\gamma, x) $ is defined on an open subset $U \subset \mathbb{R}^{n_1} \times \mathbb{R}^{n_2}$ and satisfies the following properties:
\begin{enumerate}[(i)]
    \item F is $C^{\infty}$ in both variables $\gamma$ and $x$, separately,
    \item all partial derivatives of $F$ with respect to either $x$ or $\gamma$ are bounded on compact subset  of $U$.
\end{enumerate}
Then $F$ is jointly $C^\infty $ on $U$.
\end{lemma}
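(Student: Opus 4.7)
The plan is to localize $F$ via a smooth cutoff, pass to the joint Fourier transform, and convert the bounds on pure partial derivatives into joint rapid decay of $\hat F$; joint smoothness then follows from Fourier inversion. This avoids the main pitfall of attempting to produce mixed partials of $F$ by hand, for which hypotheses (i)--(ii) give no direct information.

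\textbf{Step 1 (Joint continuity and localization).} First establish joint continuity of $F$ on $U$ by writing
\begin{equation*}
F(\gamma,x)-F(\gamma_0,x_0)=[F(\gamma,x)-F(\gamma_0,x)]+[F(\gamma_0,x)-F(\gamma_0,x_0)]
\end{equation*}
and applying the mean value theorem to each bracket, using hypothesis (ii) on the first-order pure derivatives in each variable. Then fix $(\gamma_0,x_0)\in U$ and a smooth cutoff $\chi\in C^\infty_c(U)$ with $\chi\equiv1$ on a neighborhood $V$ of $(\gamma_0,x_0)$. Set $\tilde F := \chi F$, extended by zero. By Leibniz and the joint smoothness of $\chi$, $\tilde F$ is separately $C^\infty$, compactly supported, and every pure partial $\partial_\gamma^\beta\tilde F$, $\partial_x^\alpha\tilde F$ is globally bounded on $\mathbb{R}^{n_1}\times\mathbb{R}^{n_2}$. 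It suffices to show $\tilde F\in C^\infty$ jointly.

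\textbf{Step 2 (Separate rapid decay of $\hat{\tilde F}$).} Since $\tilde F\in L^1$, the joint Fourier transform
\begin{equation*}
\hat{\tilde F}(\xi,\eta)=\int \tilde F(\gamma,x)\,e^{-i\gamma\cdot\xi-ix\cdot\eta}\,d\gamma\,dx
\end{equation*}
is a bounded continuous function. For any multi-index $\beta\in\mathbb{N}^{n_1}$, Fubini reduces the integral to an iterated one, and integration by parts $|\beta|$ times in $\gamma$ on the inner integral (valid since $\tilde F(\cdot,x)$ is smooth and compactly supported for each fixed $x$) gives
\begin{equation*}
(i\xi)^\beta\hat{\tilde F}(\xi,\eta)=\int \partial_\gamma^\beta\tilde F(\gamma,x)\,e^{-i\gamma\cdot\xi-ix\cdot\eta}\,d\gamma\,dx,
\end{equation*}
so $|\xi^\beta\hat{\tilde F}(\xi,\eta)|\leq\|\partial_\gamma^\beta\tilde F\|_{L^1}$, uniformly in $\eta$. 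The required joint measurability of $\partial_\gamma^\beta\tilde F$ is automatic: for each $x$ it is smooth in $\gamma$, and for each $\gamma$ it is a pointwise limit of continuous functions of $x$. By the symmetric argument in $x$, $|\eta^\alpha\hat{\tilde F}(\xi,\eta)|\leq\|\partial_x^\alpha\tilde F\|_{L^1}$ uniformly in $\xi$. Critically, only pure partial derivatives of $\tilde F$ are invoked.

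\textbf{Step 3 (Joint decay and conclusion).} Splitting into the cases $|\xi|\geq|\eta|$ and $|\eta|>|\xi|$ and applying the estimate from Step 2 of order $k$ with the appropriate pure multi-index, for every $k\in\mathbb{N}$ there exists $C_k$ with
\begin{equation*}
(1+|\xi|+|\eta|)^k|\hat{\tilde F}(\xi,\eta)|\leq C_k.
\end{equation*}
Hence $\hat{\tilde F}$ decays faster than any polynomial. By Fourier inversion
\begin{equation*}
\tilde F(\gamma,x)=(2\pi)^{-(n_1+n_2)}\int\hat{\tilde F}(\xi,\eta)\,e^{i\gamma\cdot\xi+ix\cdot\eta}\,d\xi\,d\eta,
\end{equation*}
and differentiation under this integral sign (justified for every multi-index by the rapid decay) shows that every joint mixed partial $\partial_x^\alpha\partial_\gamma^\beta\tilde F$ exists classically and is continuous. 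Thus $\tilde F\in C^\infty$ jointly, and restricting to $V$ gives $F\in C^\infty(V)$. Since $(\gamma_0,x_0)\in U$ was arbitrary, $F\in C^\infty(U)$.

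\textbf{Main obstacle.} The delicate point is that hypotheses (i)--(ii) do not a priori yield classical mixed partials of $F$, so direct Taylor-expansion or mollification strategies immediately face a circularity in trying to produce $\partial_x^\alpha\partial_\gamma^\beta F$. The Fourier route sidesteps this by using only one-variable integration by parts inside each iterated integral, which requires nothing beyond pure derivatives. The remaining non-routine step is the combinatorial deduction of joint rapid decay from two separate uniform-in-the-other-variable decays in Step 3; this is elementary but is the place where the two halves of hypothesis (ii) are genuinely combined.
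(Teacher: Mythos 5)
The paper gives no proof of this lemma at all: it is imported verbatim as Lemma 6.2 of \cite{rothschild1976hypoelliptic}, so there is no internal argument to compare yours against. Your proof is correct and complete, and it is in fact the standard (indeed the original Rothschild--Stein) route: localize with a cutoff, integrate by parts separately in each block of variables to get $|\xi^\beta\hat{\tilde F}|\leq\|\partial_\gamma^\beta\tilde F\|_{L^1}$ and $|\eta^\alpha\hat{\tilde F}|\leq\|\partial_x^\alpha\tilde F\|_{L^1}$, combine the two one-sided decays into joint rapid decay by splitting on $|\xi|\gtrless|\eta|$, and invert. You also correctly handle the two points where such an argument can silently fail: the joint measurability of the pure partials needed for Fubini/Tonelli, and the use of the joint continuity established in Step 1 to upgrade the a.e.\ Fourier inversion identity to a pointwise one before differentiating under the integral. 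Nothing is missing.
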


\begin{proof}[Proof of Lemma \ref{e1}]
We only prove that $\partial_s K$ is inner-closed uniformly bounded. The same method could deduce the higher derivatives by inductive.
Denote $K_s (p, t) = K (p, t; s)$. The target manifold $(N, h)$ can be isometrically embedded by $\iota$ in $(\mathbb{R}^K, g_{can})$. Let $B(N)$ be the tubular neighborhood of $\iota (N)$ and $P: B(N) \rightarrow \iota (N) $ the closest point projection map. Let $u = \iota \circ K$ satisfying
\begin{align}
(\Delta_H - \partial_t) u^a =  P^a_{bc} (u) \la \nabla_H u^b , \nabla_H u^c \ra
\end{align}
with the initial condition
\begin{align}
u^a (p, 0; s) = \phi^a (p; s).
\end{align}
Moreover, we have the estimates
\begin{align}
\sup_{(t, s) \in (0, +\infty) \times I} || d_H u_s (\cdot , t) ||_{C^0} \leq C_8 \bigg( ||d \phi||_{C^0} + \sum_a ||\phi^a||_{C^0} \bigg), \label{e2}
\end{align}
and
\begin{align}
\sup_{(t, s) \in [ \frac{T_0}{2}, + \infty ) \times I } ||  u_s^a (\cdot , t) ||_{C^\gamma} \leq C_{\gamma } \bigg( ||d \phi||_{C^0} + \sum_a ||\phi^a||_{C^0} \bigg). \label{e3}
\end{align}

Firstly, we examine that $u (p, t ;s)$ is Lipschitz in $s$. Fixed $s_0 \in I$. Let $v (p, t; s) = u (p, t; s+ s_0) - u (p, t; s_0)$ which satisfies
\begin{align}
v^a_s (p, t ) = - \int_0^t \int_M H(p, q, t-\lambda) G^a_s (q, \lambda) dV_q d \lambda + \Phi_s^a (p, t), \label{e10}
\end{align}
where
\begin{align*}
\Phi_s^a (p, t) = \int_M H(p, q, t) \big( \phi^a_{s+s_0} (q) - \phi^a_{s_0} (q) \big) d V_q,
\end{align*}
and
\begin{align*}
G^a_s= & ( P^a_{bc} (u_{s+s_0} )- P^a_{bc} (u_{s_0}) ) \la \nabla_H u^b_{s + s_0}, \nabla_H u^c_{s + s_0} \ra \\
& \quad + P^a_{bc} (u_{s_0} ) \la \nabla_H u^b_{s + s_0} - \nabla_H u^b_{s_0 }, \nabla_H u^c_{s + s_0} \ra \\
& \quad + P^a_{bc} (u_{s_0} ) \la \nabla_H u^b_{s_0}, \nabla_H u^c_{s + s_0} - \nabla_H u^c_{s_0} \ra.
\end{align*}
Denote
\begin{align*}
V_s (t) = \sup_{M \times [0,t]} \sum_a \big( |v^a_s| + | \nabla_H v^a_s | \big).
\end{align*}
By \eqref{e2} and the mean value theorem, we have $| G^a_s (p,t) | \leq C_9  V_s (t)$. Moreover, a similar argument as \eqref{b14} shows that $ || \nabla \Phi^a_s ( \cdot, t ) ||_{C^0} \leq e^{C_1 t} || \nabla \phi^a_{s + s_0} - \nabla \phi^a_{s_0} ||_{C^0} = O(s)$. Hence combining with Lemma \ref{bt3}, we find
\begin{align*}
V_s (t) \leq C_\beta' t^\beta V_s (t) + e^{C_1 t} || \phi_{s+ s_0} - \phi_s ||_{C^1}.
\end{align*}
Choosing $T_3 \leq 1$ such that $C_{\beta}'  T_3^\beta \leq \frac{1}{2}$, we obtain that
\begin{align}
\sup_{M \times [0, T_3]} \sum_a \big( |v^a_s| + | \nabla_H v^a_s | \big) = V_s (T_3)  \leq 2 e^{C_1 } || \phi_{s + s_0} - \phi_{s_0} ||_{C^1} = O(s), \label{e8}
\end{align}
which, by Theorem \ref{ct2} and Theorem \ref{ct3}, yields that
\begin{align}
\sum_a \big( ||v^a_s  ||_{C^0} + || \nabla v^a_s||_{C^0} \big) \big|_{t =  \frac{T_3}{2} } \leq C_9' || \phi_{s + s_0} - \phi_{s_0} ||_{C^1} = O(s). \label{e4}
\end{align}
On the other hand, since $( \Delta_H - \partial_t ) v_s^a = G^a_s $, by \eqref{a9}, Cauchy inequality and the commutation relation $ (v^a_s)_{0 \alpha}- (v^a_s)_{\alpha 0 } = (v^a_s)_{\bb} A_{\beta \alpha} $ (cf. \cite{greenleaf1985first,lee1988psuedo,Ren201447}), we know that
\begin{align*}
(\Delta_H - \partial_t) \sum_a \big( |v_s^a|^2 + | \nabla v^a_s |^2 \big)  \geq - C_9''  \sum_a \big( |v_s^a|^2 + | \nabla v^a_s |^2 \big).
\end{align*}
The maximum principle (Lemma \ref{bt2}) shows that for $t \geq \frac{T_3}{2}$,
\begin{align}
\sum_a \big( |v_s^a|^2 + | \nabla v^a_s |^2 \big) (p, t) \leq& e^{C_9'' ( t - \frac{T_3}{2} )}  \sup_{ q \in M} \sum_a \bigg( |v_s^a (q , \frac{T_3}{2}) |^2 + | \nabla v^a_s (q , \frac{T_3}{2}) |^2 \bigg) \nonumber \\
=&  O(s), \label{e9}
\end{align}
due to \eqref{e4}. Hence $ v_s= O (s)$, i.e. $u (p, t ;s)$ is Lipschitz in $s$.

Secondly, we consider the system
\begin{align}
(\Delta_H - \partial_t) w^a_s = Q^a_s, \mbox{ with } w^a_s (p, 0) = \partial_s \phi^a_s (p), \label{e5}
\end{align}
where
\begin{align*}
Q^a_s =& P^a_{bcd}(u_s) \la \nabla_H u_s^b, \nabla_H u_s^c \ra w^d_s + P^a_{bc} (u_s) \la \nabla_H w^b_s, \nabla_H u^c_s \ra \\
& + P^a_{bc} (u_s) \la \nabla_H u^b_s, \nabla_H w^c_s \ra.
\end{align*}
The short-time existence is similar to Theorem \ref{bt6}, and by the maximum principle, the long-time existence follows from the estimate that for $ t \geq \frac{T_3}{2}$
\begin{align*}
(\Delta_H - \partial_t) \sum_a \big( |w_s^a|^2 + | \nabla w^a_s |^2 \big)  \geq - C_{10}  \sum_a \big( |w_s^a|^2 + | \nabla w^a_s |^2 \big)
\end{align*}
due to \eqref{a9}. Moreover, the two process show that
\begin{align}
\sup_{(t, s) \in (0, +\infty) \times I} \sum_a \big( ||w^a_s ||_{C^0} + || d_H w_s^a  ||_{C^0} \big) \leq C_{10}' \sum_a \bigg( ||\partial_s \phi^a_s||_{C^0} + ||d \partial_s \phi^a_s ||_{C^0}  \bigg), \label{e7}
\end{align}
and
\begin{align}
\sup_{(t, s) \in [ \frac{T_0}{2}, + \infty ) \times I } \sum_a || w_s^a (\cdot , t) ||_{C^\gamma} \leq C_{\gamma }' \sum_a \bigg( ||\partial_s \phi^a_s||_{C^0} + ||d \partial_s \phi^a_s||_{C^0}  \bigg). \label{e6}
\end{align}

Thirdly, to prove that $\partial_s u_s$ exists, it suffices to demonstrate that
$$
y_s = (u_{s+ s_0} - u_{s_0}) - w_{s_0} s = v_s - w_{s_0 } s = o (s),
$$
which implies that $\partial_s u_s = w_s $ is inner-closed uniformly bounded by \eqref{e6}. After a tedious calculation and using the estimates \eqref{e8}, \eqref{e9}, we conclude that
\begin{align*}
(\Delta_H - \partial_t ) y^a_s =&  G^a_s - Q^a_{s_0 } s \\
=& P^a_{bcd}(u_{s_0 }) \la \nabla_H u_{s_0 }^b, \nabla_H u_{s_0 }^c \ra y^d_s + P^a_{bc} (u_{s_0 }) \la \nabla_H y^b_s, \nabla_H u^c_{s_0 } \ra \\
& + P^a_{bc} (u_{s_0 }) \la \nabla_H u^b_{s_0 }, \nabla_H y^c_s \ra + O(s^2)
\end{align*}
with the initial condition $y_s (p, 0) = o(s)$.
As the argument for $v_s$, we can estimate $y_s = o (s)$ on $t \leq T_3$ by the expression analogous to \eqref{e10} and on $t \geq \frac{T_3}{2}$ by the CR Bochner formula \eqref{a9} and the maximum principle. Since the details are the repetition of the above process, we omit them.
\end{proof}

Now we choose a local unitary frame $\{ \eta_\alpha \}$ of $T_{1, 0} M$ with $\nabla^M  \eta_\alpha  =0$ at a given point $p \in M$ and do the calculation at $p$,
\begin{align*}
\frac{1}{2} \Delta_H | \partial_s K |^2 =& | \nabla_H \partial_s K |^2 + \la \nabla_{ \eta_{\ba } } \nabla_{ \eta_\alpha } \partial_s K, \partial_s K \ra + \la \nabla_{ \eta_{\alpha } } \nabla_{ \eta_{\ba} } \partial_s K, \partial_s K \ra \\
= & | \nabla_H \partial_s K |^2 + \la \nabla_{ \eta_{\ba } } \nabla_{ \partial_s } dK (\eta_\alpha), \partial_s K \ra + \la \nabla_{ \eta_{\alpha } } \nabla_{ \partial_s } dK ( \eta_{\ba} ), \partial_s K \ra \\
= & | \nabla_H \partial_s K |^2 + \la \nabla_{\partial_s} \tau (K), \partial_s K \ra + trace_{G_\theta} \la R^N ( d_H K, \partial_s K) d_H K ,  \partial_s K \ra
\end{align*}
Hence we have
\begin{align}
\frac{1}{2} ( \Delta_H -\partial_t) | \partial_s K |^2 = | \nabla_H \partial_s K |^2  + trace_{G_\theta} \la R^N ( d_H K, \partial_s K) d_H K ,  \partial_s K \ra. \label{e13}
\end{align}
By the maximum principle, we obtain the following lemma.

\begin{lemma} \label{et1}
Assume that $(N,h)$ has nonpositive sectional curvature.
Suppose that $ \phi(p ; s) \in C^\infty (M \times I ,N) $ and $K(p, t; s)$ is the solution of the pseudo-harmonic flow with initial data $\phi (p ; s)$ for fixed $s$. Then for any $ s \in I$,
$|| \partial_s K (\cdot , t ; s) ||_{C^0 (M)} $
is non-increasing in $t$.
\end{lemma}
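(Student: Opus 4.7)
The plan is to apply the subelliptic maximum principle (Lemma \ref{bt2}) to the function $w(p,t) := |\partial_s K(p,t;s)|^2$, using the identity \eqref{e13} that has just been derived as the required differential inequality. By Lemma \ref{e1}, $K$ is jointly $C^\infty$ in $(p,t,s)$, so for each fixed $s \in I$ the function $w$ belongs to $C^\infty(M \times (0,+\infty)) \cap C^0(M \times [0,+\infty))$, with initial value $w(p,0) = |\partial_s \phi(p;s)|^2$; this is exactly the regularity Lemma \ref{bt2} requires.

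The only substantive step is to check that the curvature contribution in \eqref{e13} is nonnegative. Expanding in a real orthonormal horizontal frame $\{e_a\}_{a=1}^{2m}$ of $HM$,
\begin{align*}
\mathrm{trace}_{G_\theta} \langle R^N(d_H K, \partial_s K) d_H K, \partial_s K \rangle = \sum_{a=1}^{2m} \langle R^N(dK(e_a), \partial_s K) dK(e_a), \partial_s K \rangle.
\end{align*}
By the pair-antisymmetry $\langle R^N(X,Y)Z,W\rangle = -\langle R^N(X,Y)W,Z\rangle$, each summand equals $-\langle R^N(dK(e_a), \partial_s K)\partial_s K, dK(e_a)\rangle$, which is $\geq 0$ because $(N,h)$ has nonpositive sectional curvature. (This is the identical sign calculation carried out at the end of the proof of Lemma \ref{at2}.) Substituting into \eqref{e13} yields
\begin{align*}
(\Delta_H - \partial_t) |\partial_s K|^2 \;\geq\; 2 |\nabla_H \partial_s K|^2 \;\geq\; 0.
\end{align*}

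With this subsolution inequality in hand, the monotonicity conclusion follows from a time-translated application of Lemma \ref{bt2}. For any $0 \leq t_1 < t_2$, set $c := \max_{p \in M} w(p,t_1)$ and apply the maximum principle to $\tilde w(p,\tau) := w(p, t_1 + \tau)$ on $M \times [0, t_2 - t_1]$; this gives $w(p,t_2) \leq c$ for every $p \in M$, hence $\|\partial_s K(\cdot, t_2; s)\|_{C^0(M)} \leq \|\partial_s K(\cdot, t_1; s)\|_{C^0(M)}$, which is the claimed non-increasing property. I do not anticipate a genuine obstacle: all of the heavy lifting has already been absorbed into the derivation of \eqref{e13}, the smooth dependence established in Lemma \ref{e1}, and the maximum principle in Lemma \ref{bt2}, so the argument is a short composition of these three ingredients and is directly parallel to the standard Eells--Sampson computation for Riemannian harmonic map heat flow into nonpositively curved targets.
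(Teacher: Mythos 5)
Your proposal is correct and follows exactly the route the paper intends: the paper derives \eqref{e13} and then simply states ``by the maximum principle, we obtain the following lemma,'' and your argument supplies precisely the missing details (the sign check on the curvature term under nonpositive sectional curvature and the time-translated application of Lemma \ref{bt2}). No discrepancy with the paper's approach.
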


As a consequence of Lemma \ref{et1}, the distance between homotopic solutions of the pseudo-harmonic flow must be nonincreasing. The distance $d (f_0, f_1)$ of two homotopic maps $f_0$ and $f_1$ is defined as follows. If $F: M \times [0,1] \rightarrow N$ is a smooth homotopy from $f_0$ and $f_1$, then the length of $F$ is given by
\begin{align*}
L (F) = \sup_{p \in M} \int_0^1 | \partial_s F (p, s) | ds.
\end{align*}
Then the distance $d (f_0, f_1)$ is the infimum of the lengths over all smooth homotopies from $f_0$ and $f_1$. The authors in \cite{schoen1979compact} deduced that the distance can be uniquely attained by a smooth homotopy $F$, called the geodesic homotopy, in which $s \mapsto F( p ; s)$ is a geodesic for all $p \in M$, when $(N, h)$ has nonpositive sectional curvature.

\begin{theorem}
Assume that $(M, HM, J_b, \theta)$ is a closed pseudo-Hermitian manifold and $(N, h)$ is a compact Riemannian manifold with nonpositive sectional curvature. If $f_0 (p, t)$ and $f_1 (p, t)$ are two solutions of the pseudo-harmonic flow with the homotopic initial data, then $t \mapsto d (f_0 (\cdot, t),  f_1 ( \cdot, t ))$ is nonincreasing.
\end{theorem}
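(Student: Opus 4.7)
Proof proposal. The plan is to construct a distance-witnessing homotopy at time $t'$, flow it forward by the pseudo-harmonic heat equation slice by slice, and then show the length of the flowed homotopy at a later time $t \geq t'$ bounds the new distance.

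Fix $0 \leq t' < t$. By the result of Schoen-Yau cited before Lemma \ref{et1}, the nonpositive sectional curvature of $(N,h)$ provides a smooth geodesic homotopy $F \in C^\infty(M \times [0,1], N)$ from $f_0(\cdot, t')$ to $f_1(\cdot, t')$, meaning $s \mapsto F(p,s)$ is a constant-speed geodesic for every $p$ and $L(F) = d(f_0(\cdot, t'), f_1(\cdot, t'))$. In particular $|\partial_s F(p,s)|$ is independent of $s \in [0,1]$, so
\begin{align*}
L(F) = \sup_{p\in M} |\partial_s F(p,s_0)| \quad \text{for any } s_0 \in [0,1].
\end{align*}

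Next, for each fixed $s \in [0,1]$, let $K(\cdot, \tau; s)$ denote the unique long-time pseudo-harmonic flow from Theorem \ref{dt3} with initial data $F(\cdot, s)$. By Lemma \ref{e1}, $K$ is jointly smooth in $(p,\tau,s)$. The uniqueness half of Theorem \ref{dt3} applied to the flow after $t'$ gives, by time-translation, $K(p, \tau; 0) = f_0(p, t'+\tau)$ and $K(p, \tau; 1) = f_1(p, t'+\tau)$. Consequently, setting $\tau = t - t' \geq 0$, the map $s \mapsto K(\cdot, t-t'; s)$ is a smooth homotopy from $f_0(\cdot, t)$ to $f_1(\cdot, t)$, and therefore
\begin{align*}
d(f_0(\cdot,t), f_1(\cdot,t)) \;\leq\; L\bigl( K(\cdot, t-t'; \cdot) \bigr) \;=\; \sup_{p \in M} \int_0^1 |\partial_s K(p, t-t'; s)|\, ds.
\end{align*}

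Now invoke Lemma \ref{et1}: since $(N,h)$ has nonpositive sectional curvature, the CR Bochner-type identity \eqref{e13} combined with the maximum principle shows that for each $s$ the function $\tau \mapsto \|\partial_s K(\cdot, \tau; s)\|_{C^0(M)}$ is nonincreasing. Therefore, exchanging sup and integral in the usual way,
\begin{align*}
\sup_{p \in M} \int_0^1 |\partial_s K(p, t-t'; s)|\, ds
\;\leq\; \int_0^1 \|\partial_s K(\cdot, t-t'; s)\|_{C^0(M)}\, ds
\;\leq\; \int_0^1 \|\partial_s K(\cdot, 0; s)\|_{C^0(M)}\, ds.
\end{align*}
Since $K(\cdot, 0; s) = F(\cdot, s)$ and $|\partial_s F(p,s)|$ is independent of $s$, the last integral equals $\sup_p |\partial_s F(p, s_0)| = L(F) = d(f_0(\cdot, t'), f_1(\cdot, t'))$. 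Combining the two displays yields the monotonicity.

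The main obstacle is setting up the homotopy correctly so that Lemma \ref{et1} is applicable as a parametrized statement; this is precisely why Lemma \ref{e1} (joint smoothness of the flow in all variables, including the homotopy parameter $s$) was established, and why one needs the geodesic homotopy (smoothness in $s$ and the constancy of $|\partial_s F|$) rather than an arbitrary almost-extremal homotopy. Everything else is the routine bookkeeping with $\sup$ and $\int$ and the time-translation uniqueness of the flow.
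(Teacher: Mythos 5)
Your proposal is correct and follows essentially the same route as the paper: take the geodesic homotopy at time $t'$, flow each slice, identify the endpoints with $f_0(\cdot,t)$ and $f_1(\cdot,t)$ by uniqueness, and combine Lemma \ref{et1} with the constancy of $|\partial_s F|$ in $s$ to exchange $\sup$ and $\int$. You are in fact slightly more explicit than the paper about why the final integral collapses to $L(F)$, which is a welcome clarification rather than a deviation.
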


\begin{proof}
Fixed $t_0 \geq 0$ and let $\phi (p; s)$ be the geodesic homotopy from $f_0 ( p, t_0)$ to $f_1 (p, t_0)$. Then we can obtain the solution $K $ of the pseudo-harmonic flow with the initial data $\phi(p; s)$ for any $s$. Thus $K (p, t - t_0 ;0)= f_0 (p, t)$ and $K (p, t - t_0 ;1)= f_1 (p, t)$ for $t \geq t_0$. By Lemma \ref{et1},
$|| \partial_s K (\cdot , t ; s)  ||_{C^0 (M)} $
is nonincreasing with respect to $t$ which yields that
\begin{align*}
d (f_0( \cdot, t ), f_1 (\cdot, t ) ) \leq& L \big( K ( \cdot, t- t_0; \cdot )  \big) = \sup_{p \in M} \int^1_0 \bigg| \frac{\partial K}{\partial s} (p , t- t_0; s) \bigg| d s \\
\leq & \int^1_0 \sup_{x \in M} \bigg| \frac{\partial K}{\partial s} (p , t- t_0; s) \bigg| d \tau \leq \int^1_0 \sup_{x \in M} \bigg| \frac{\partial K}{\partial s} (p , 0; s) \bigg| d \tau \\
= & d \big( f_0(\cdot, t_0 ), f_1 ( \cdot, t_0 ) \big).
\end{align*}
Hence we complete the proof.
\end{proof}

The pseudo-harmonic map $f_\infty$ of Theorem \ref{dt3} is also the solution of the pseudo-harmonic heat flow with the initial data $f_\infty$. So the distance of $f (p, t)$ and $f_\infty (p)$ is nonincreasing. Hence we have the following theorem.

\begin{theorem} \label{et2}
Assume that $(M, HM, J_b, \theta)$ is a closed pseudo-Hermitian manifold and $(N, h)$ is a compact Riemannian manifold with nonpositive sectional curvature. Let $\phi: M \rightarrow N$ be a smooth map. Then the solution $f (p, t)$ of the pseudo-harmonic flow
\begin{align*}
\frac{\partial f}{\partial t} =\tau (f), \mbox{ and }  f (p, 0) = \phi (p)
\end{align*}
exists, and it converges in $C^\infty (M,N)$ to a pseudo-harmonic map $f_\infty$ in the same homotopy class of $ \phi $.
\end{theorem}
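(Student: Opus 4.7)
The plan is to upgrade the subsequential $C^\infty$ convergence provided by Theorem \ref{dt3} to full $C^\infty$ convergence of $f_t$ to $f_\infty$ as $t\to\infty$, by exploiting the distance-nonincreasing property of the preceding theorem together with the fact that a pseudo-harmonic map is a stationary (time-independent) solution of the flow.

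First, Theorem \ref{dt3} supplies a unique long-time smooth solution $f\in C^\infty(M\times(0,+\infty),N)\cap C^0(M\times[0,+\infty),N)$ with $f(\cdot,0)=\phi$, together with a sequence $t_n\to+\infty$ along which $f_{t_n}\to f_\infty$ in $C^\infty(M,N)$ for some pseudo-harmonic map $f_\infty$. I would first record that $f_\infty$ lies in the homotopy class of $\phi$: the restriction $f\colon M\times[0,t_n]\to N$ is a homotopy from $\phi$ to $f_{t_n}$, while $C^0$-closeness of $f_{t_n}$ to $f_\infty$ (for $n$ large, well inside the injectivity radius of $N$, which is available since $N$ is compact with nonpositive sectional curvature) yields a canonical geodesic homotopy between $f_{t_n}$ and $f_\infty$.

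Next, I would promote this to convergence of the whole family $\{f_t\}$. Because $f_\infty$ is pseudo-harmonic, $(p,t)\mapsto f_\infty(p)$ is itself a solution of the pseudo-harmonic flow with initial datum $f_\infty$. For $n$ large enough that $f_{t_n}$ is homotopic to $f_\infty$, apply the preceding theorem to the time-translated flow $f^{(n)}(p,s):=f(p,s+t_n)$ (whose initial datum is $f_{t_n}$) and to the constant flow $f_\infty$, obtaining
\begin{equation*}
d\bigl(f(\cdot,s+t_n),\,f_\infty\bigr)\;\le\;d\bigl(f_{t_n},\,f_\infty\bigr)\qquad\text{for all }s\ge 0.
\end{equation*}
The right-hand side tends to $0$ as $n\to\infty$, so given $\varepsilon>0$ we can pick $n$ with $d(f_{t_n},f_\infty)<\varepsilon$ and conclude $d(f(\cdot,t),f_\infty)<\varepsilon$ for every $t\ge t_n$. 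Hence $f_t\to f_\infty$ uniformly on $M$ as $t\to+\infty$.

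Finally, I would bootstrap this uniform convergence to $C^\infty$ convergence using the higher-order estimate \eqref{d12}: for every $\gamma\in\mathbb{N}$ the family $\{f_t\}_{t\ge T_0/2}$ is bounded in $C^\gamma(M,N)$. By Arzel\`a--Ascoli, any sequence $s_k\to+\infty$ has a subsequence along which $f_{s_k}$ converges in $C^\infty(M,N)$; the already-established $C^0$ limit forces every such limit to equal $f_\infty$, so $f_t\to f_\infty$ in $C^\infty(M,N)$. The only genuinely delicate point is the hypothesis check in applying the preceding theorem, namely that $f_{t_n}$ and $f_\infty$ share a homotopy class; this is immediate from $C^0$-closeness once $n$ is large, but it is the step where the nonpositivity of the sectional curvature (via the well-defined geodesic homotopy) is used in its cleanest form.
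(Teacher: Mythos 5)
Your proposal is correct and follows essentially the same route as the paper, which (very tersely) observes that $f_\infty$ is a stationary solution of the flow, invokes the distance-nonincreasing theorem to upgrade the subsequential convergence from Theorem \ref{dt3} to uniform convergence $d(f_t,f_\infty)\to 0$, and uses the uniform $C^\gamma$ bounds to bootstrap to $C^\infty$ convergence. Your write-up merely makes explicit the homotopy verification and the Arzel\`a--Ascoli step that the paper leaves implicit.
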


This asymptotic behavior at infinity was first obtained by P. Hartman in \cite{hartman1967homotopic} in the harmonic case. Some other properties for harmonic maps given in \cite{hartman1967homotopic} can also be generalized to pseudo-harmonic maps.

\begin{theorem} \label{et5}
Assume that $(M, HM, J_b, \theta)$ is a closed pseudo-Hermitian manifold and $(N, h)$ is a compact Riemannian manifold with nonpositive sectional curvature. If $\Phi$ is the geodesic homotopy between two pseudo-harmonic maps $\phi_0 $ and $ \phi_1$, then $E_H (\Phi_s) = E_H (\phi_0) = E_H (\phi_1)$. Hence all pseudo-harmonic maps share the same horizontal energy.
\end{theorem}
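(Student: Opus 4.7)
The plan is to view $s \mapsto E_H(\Phi_s)$ as a convex function on $[0,1]$ whose first derivative vanishes at both endpoints, and conclude from these two properties that it is constant.

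First, by the same first-variation computation as in Lemma \ref{dt1}, using the symmetry $\nabla d\Phi(\partial_s, X) = \nabla d\Phi(X, \partial_s)$ together with integration by parts in the horizontal direction,
\begin{align*}
\frac{d}{ds} E_H(\Phi_s) = \int_M \langle \nabla_H \partial_s \Phi, d_H \Phi\rangle = -\int_M \langle \partial_s \Phi, \tau(\Phi_s)\rangle.
\end{align*}
Since $\phi_0$ and $\phi_1$ are pseudo-harmonic, $\tau(\phi_0) = \tau(\phi_1) = 0$, so this derivative vanishes at $s = 0$ and $s = 1$.

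Second, I would differentiate once more in $s$, commute $\nabla_{\partial_s}$ with $\nabla_H$ at the cost of a curvature term, and integrate by parts in the horizontal direction the leftover contribution $\int_M \langle \nabla_H \nabla_{\partial_s}\partial_s \Phi, \nabla_H \Phi\rangle$. In the heat-flow proof of Lemma \ref{dt1}, that leftover was reabsorbed (via the equation $\partial_t f = \tau(f)$) into a second copy of $\frac{d^2}{dt^2}E_H$, producing the doubling factor visible in \eqref{d2}. In the present geodesic case the situation is simpler: because $s \mapsto \Phi(p,s)$ is a geodesic in $N$ for every $p$, one has $\nabla_{\partial_s}\partial_s\Phi \equiv 0$ and the entire leftover term simply drops out. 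The resulting identity is
\begin{align*}
\frac{d^2}{ds^2} E_H(\Phi_s) = \int_M |\nabla_H \partial_s \Phi|^2 - \int_M \mathrm{trace}_{G_\theta}\langle R^N(\partial_s\Phi, d_H\Phi)\, d_H\Phi,\, \partial_s\Phi\rangle.
\end{align*}
Under the sign convention used in Lemma \ref{at2}, nonpositive sectional curvature means $\langle R^N(X,Y)Y,X\rangle \leq 0$, so the curvature integrand is nonpositive and $\frac{d^2}{ds^2}E_H(\Phi_s) \geq 0$. Thus $E_H \circ \Phi_s$ is convex on $[0,1]$.

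Third, a convex function on $[0,1]$ has a monotone nondecreasing derivative; combined with the vanishing at $s = 0$ and $s = 1$, this forces $\frac{d}{ds}E_H(\Phi_s) \equiv 0$, and hence $E_H(\Phi_s)$ is constant on $[0,1]$. This proves $E_H(\Phi_s) = E_H(\phi_0) = E_H(\phi_1)$. The concluding sentence of the statement then follows since any two homotopic pseudo-harmonic maps admit a geodesic homotopy (as recalled in the discussion preceding Theorem \ref{et2}), so horizontal energy is a homotopy invariant within the class of pseudo-harmonic maps. The main obstacle I anticipate is the bookkeeping in the second-variation computation, specifically verifying that the leftover term vanishes cleanly in the geodesic case and that the sign conventions for $R^N$ align with those in Lemma \ref{at2}; once these are in hand, the convexity-plus-endpoint argument is elementary.
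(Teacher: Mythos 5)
Your proposal is correct and follows essentially the same route as the paper's own proof: compute the first variation (which vanishes at $s=0,1$ by pseudo-harmonicity), show convexity of $s \mapsto E_H(\Phi_s)$ via the second variation where the geodesic condition $\nabla_{\partial_s}\partial_s\Phi = 0$ kills the leftover term and nonpositive sectional curvature gives the right sign, and conclude that the derivative vanishes identically. The sign bookkeeping you flag as a concern works out exactly as you describe, since $\langle R^N(X,Y)X,Y\rangle = -\langle R^N(X,Y)Y,X\rangle \geq 0$ under the nonpositivity hypothesis.
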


\begin{proof}
Since $\Phi (p, \cdot)$ is geodesic for any $p$, we have
\begin{align*}
\frac{d}{ds} E_{H} (\Phi_s) = \int_M \la \nabla_{\partial_s} d_H \Phi, d_H \Phi  \ra = \int_M \la \nabla_{H} d \Phi (\partial_s), d_H \Phi  \ra,
\end{align*}
and
\begin{align*}
\frac{d^2}{ds^2} E_{H} (\Phi_s) =& \int_M \la \nabla_{H} d \Phi (\partial_s), \nabla_H d \Phi (\partial_s)  \ra + \int_M \la \nabla_{\partial_s} \nabla_{H} d \Phi (\partial_s), d_H \Phi  \ra \\
=& \int_M \big| \nabla_{H} d \Phi (\partial_s) \big|^2 + \int_M \la \nabla_{H} \nabla_{\partial_s} d \Phi (\partial_s), d_H \Phi  \ra \\
&+ \int_M \la R^N( d \Phi (\partial_s), d_H \Phi ) d \Phi (\partial_s), d_H \Phi \ra \\
\geq & 0.
\end{align*}
Thus $\dfrac{d}{ds} E_H (\Phi_s)$ is increasing. 
But because $\phi_0, \: \phi_1$ are pseudo-harmonic, 
$$ \frac{d}{ds} \bigg|_{s=0} E_{H} (\Phi_s) =   \frac{d}{ds} \bigg|_{s=1} E_{H} (\Phi_s) =0 $$ 
which implies $$\frac{d}{ds}  E_{H} (\Phi_s) \equiv 0. $$
Hence $ E_H (\Phi_s) = E_H (\phi_0) = E_H (\phi_1) $. 
\end{proof}

\begin{theorem} \label{et3}
Assume that $(M, HM, J_b, \theta)$ is a closed pseudo-Hermitian manifold and $(N, h)$ is a compact Riemannian manifold with nonpositive sectional curvature. If $\phi_0, \phi_1 : M \rightarrow N$ are homotopic pseudo-harmonic maps, then their geodesic homotopy $\Phi: M \times [0,1] \rightarrow N$ is pseudo-harmonic, i.e. $\Phi(p; s)$ is pseudo-harmonic for any $s\in [0,1]$. In particular, the set of pseudo-harmonic maps in the same homotopy class is connected.
\end{theorem}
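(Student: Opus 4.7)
The plan is to prove $\tau(\Phi_s) \equiv 0$ for each $s \in [0,1]$ by a rigidity argument combining the pseudo-harmonic heat flow with the energy identity of Theorem \ref{et5}: flow $\Phi_s$ under the pseudo-harmonic heat flow, and use Theorem \ref{et5} to force the horizontal energy to be constant along the flow, which in turn forces $\tau$ to vanish identically along the flow.

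Fix $s \in [0,1]$ and let $\psi_s(\cdot, t)$ be the solution of the pseudo-harmonic heat flow with initial data $\Phi_s$. By Theorem \ref{et2}, $\psi_s$ exists for all $t \geq 0$ and converges in $C^\infty(M, N)$ to a pseudo-harmonic map $\psi_{s, \infty}$ in the homotopy class of $\Phi_s$. Since the partial geodesic homotopy $s' \mapsto \Phi_{s'}$, $s' \in [0, s]$, connects $\Phi_s$ to $\phi_0$, the limit $\psi_{s, \infty}$ also lies in the homotopy class of $\phi_0$. Applying Theorem \ref{et5} to the pair $\phi_0, \phi_1$ yields $E_H(\Phi_s) = E_H(\phi_0)$, while applying it to $\phi_0, \psi_{s, \infty}$ gives $E_H(\psi_{s, \infty}) = E_H(\phi_0)$. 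Combined with the monotonicity formula $\tfrac{d}{dt} E_H(\psi_s(\cdot, t)) = -\int_M |\tau(\psi_s)|^2 \, dV$ from Lemma \ref{dt1}, equal starting and limiting values force $E_H(\psi_s(\cdot, t))$ to be constant in $t$, and integrating the monotonicity identity then yields $\tau(\psi_s(\cdot, t)) \equiv 0$ on $M \times (0, \infty)$.

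Since $\partial_t \psi_s = \tau(\psi_s) = 0$ on $M \times (0, \infty)$, the map $\psi_s(\cdot, t)$ is independent of $t$ there; the $C^0$-continuity at $t = 0$ then yields $\psi_s(\cdot, t) = \Phi_s$ for every $t \geq 0$, so $\tau(\Phi_s) = 0$ and $\Phi_s$ is pseudo-harmonic. The connectedness assertion is then immediate, since $s \mapsto \Phi_s$ itself provides a smooth path of pseudo-harmonic maps from $\phi_0$ to $\phi_1$. The main point requiring care is the homotopy bookkeeping, namely verifying that $\psi_{s, \infty}$ really lies in the class of $\phi_0$ so that Theorem \ref{et5} legitimately applies; this follows from the continuity of the flow $t \mapsto \psi_s(\cdot, t)$ combined with the partial geodesic homotopy, and the rest of the argument is a direct assembly of results already established in the excerpt.
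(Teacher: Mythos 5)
Your proposal is correct and follows essentially the same route as the paper: run the pseudo-harmonic heat flow from $\Phi_s$ and combine the energy monotonicity of Lemma \ref{dt1} with the energy equality of Theorem \ref{et5} to force the flow to be stationary, hence $\tau(\Phi_s)=0$. The only difference is presentational---the paper packages this as a contradiction (if $\tau(\Phi_s)\neq 0$ then $E_H$ strictly decreases along the flow, contradicting $E_H(f_\infty)=E_H(\Phi_s)$), whereas you argue directly that the energy is constant in $t$.
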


\begin{proof}
The proof is by contradiction. Suppose that $\Phi_s$ is not pseudo-harmonic. Then there exists the pseudo-harmonic heat flow $f (p, t) $ with the initial data $\Phi_s$ and it will go to some pseudo-harmonic map $f_\infty$. By the horizontal energy estimate (Lemma \ref{dt1}),
$$ \frac{d}{dt} \bigg|_{t=0} E_H (f_t) = - \int_M \big| \tau  ( \Phi_s ) \big|^2 <0. $$
we have $ E_{H} (f_\infty) < E_{H} (\Phi_s) $. This leads a contradiction with Theorem \ref{et5} which implies that $E_H (f_\infty) = E_H (\phi_0) = E_H (\Phi_s)$.
\end{proof}

When the target manifold $(N, h)$ has negative sectional curvature, we could obtain more than the horizontal energy rigidity about pseudo-harmonic maps.

\begin{theorem}
Assume that $(M, HM, J_b, \theta)$ is a connected closed pseudo-Hermitian manifold and $(N, h)$ is a compact Riemannian manifold with negative sectional curvature. Let $\phi: M \rightarrow N$ be a pseudo-harmonic map. If its image is neither a single point nor a closed geodesic, then $\phi$ is the unique pseudo-harmonic map in its homotopy class.
\end{theorem}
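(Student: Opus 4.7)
The plan is to argue the contrapositive: suppose $\phi_0, \phi_1 : M \to N$ are two distinct pseudo-harmonic maps in the homotopy class of $\phi$, and conclude that $\phi(M)$ is either a single point or a closed geodesic. By Theorem \ref{et3} the geodesic homotopy $\Phi : M \times [0,1] \to N$ from $\phi_0$ to $\phi_1$ consists entirely of pseudo-harmonic maps $\Phi_s$. Setting $V := \partial_s \Phi$, I would run the same Bochner-type computation that produced \eqref{e13}, but noting that $\tau(\Phi_s) \equiv 0$ kills the mixed term and there is no heat contribution in this time-independent family. The result is the elliptic identity
\[
\tfrac{1}{2} \Delta_H |V|^2 = |\nabla_H V|^2 + \mathrm{tr}_{G_\theta} \la R^N (d_H \Phi, V)\, d_H \Phi, V \ra.
\]
Both terms on the right are pointwise non-negative (the curvature term because $K^N \leq 0$, with the sign computed exactly as in the proof of Lemma \ref{at2}), so integrating over the closed manifold $M$ forces both to vanish identically.

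This yields $\nabla_H V \equiv 0$ and pointwise vanishing of the curvature term. The first gives $X|V|^2 = 2 \la \nabla_X V, V\ra = 0$ for every horizontal $X$, so $|V|$ is locally constant along horizontal curves, and by Chow's theorem on the connected manifold $M$ we conclude $|V| \equiv c$ is a constant. If $c = 0$ then $V \equiv 0$ and $\phi_0 = \phi_1$, contradicting the assumption, so $c > 0$. The vanishing of the curvature term combined with the strict negativity of $K^N$ then forces $V(p)$ and $d\phi_0(X)$ to be linearly dependent for every $p \in M$ and every horizontal $X \in H_pM$, so $d_H\phi_0$ is everywhere valued in the line $\mathbb{R}\cdot V$.

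Next I would show $\phi_0(M)$ lies on a single geodesic of $N$. Fix $p_0 \in M$ and let $\gamma_0 \subset N$ be the geodesic through $\phi_0(p_0)$ with initial velocity $V(p_0)$. For any horizontal curve $\gamma : [0,1] \to M$ with $\gamma(0) = p_0$, the curve $\sigma := \phi_0 \circ \gamma$ satisfies $\sigma'(t) = \lambda(t) V|_{\gamma(t)}$, while the identity $\nabla_H V = 0$ pulled back along $\gamma$ reads $\nabla^N_{\sigma'} V = 0$, so $V$ is parallel along $\sigma$. Hence $\nabla_{\sigma'}\sigma' = \sigma'(\lambda)\,V$ is proportional to $\sigma'$, meaning $\sigma$ is a pregeodesic starting at $\phi_0(p_0)$ in direction $\pm V(p_0)$, and therefore $\sigma \subset \gamma_0$. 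By Chow's theorem every point of $M$ is reachable from $p_0$ by a horizontal curve, so $\phi_0(M) \subset \gamma_0$.

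To finish I would resolve the two possibilities for $\gamma_0$ by passing to the universal covers $\tilde M \to M$ and $\tilde N \to N$. After lifting, $\tilde\phi_0(\tilde M)$ is contained in an embedded line $\tilde\gamma_0 \cong \mathbb{R}$, and the induced representation $\rho = (\phi_0)_\ast : \pi_1(M) \to \pi_1(N)$ must land in the stabilizer of $\tilde\gamma_0$. If $\gamma_0$ is not a closed geodesic then that stabilizer is trivial (a nontrivial isometry of a Hadamard manifold of strictly negative curvature preserving a geodesic line acts by translation along it, which would make $\gamma_0$ closed), so $\tilde\phi_0$ descends to a globally defined pseudo-harmonic function $\bar\phi_0 : M \to \mathbb{R}$ on the closed manifold; then $\int_M |\nabla_H \bar\phi_0|^2 = -\int_M \bar\phi_0\,\Delta_H \bar\phi_0 = 0$ together with Chow connectedness forces $\bar\phi_0$ to be constant, so $\phi_0(M)$ is a single point. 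If $\gamma_0$ is closed and $\rho$ is trivial the same argument applies. If $\gamma_0$ is closed and $\rho$ is nontrivial, then $\tilde\phi_0(\tilde M) \subset \mathbb{R}$ is a connected $\mathbb{Z}$-invariant interval, hence all of $\mathbb{R}$, so $\phi_0(M) = \gamma_0$ is exactly the closed geodesic. In every case the hypothesis on the image is violated, completing the contrapositive. The main obstacle I anticipate is this last step, specifically keeping the universal-cover/equivariance bookkeeping clean and verifying the stabilizer dichotomy for geodesic lines in $\tilde N$.
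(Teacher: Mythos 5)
Your proof is correct in substance and reaches the same intermediate milestone as the paper --- the geodesic homotopy $\Phi$ is pseudo-harmonic by Theorem \ref{et3}, the Bochner identity forces $\nabla_H \partial_s\Phi = 0$, and strict negativity of $K^N$ forces $d_H\Phi$ to be valued in the line spanned by $\partial_s\Phi$ --- but it diverges both in how it gets there and, more substantially, in the endgame. Where the paper invokes Bony's strong maximum principle to make $|\partial_s\Phi|$ a constant, you integrate the Bochner identity over the closed manifold; this is more elementary and yields the same pointwise vanishing. Where the paper controls the Reeb derivative $d\Phi(T)$ via the commutation relation \eqref{d13}, builds a local potential $\psi$ with $\la d\Phi, \partial_s\Phi\ra = -d\psi$ and $\Delta_H\psi=0$, and finishes with a local-to-global patching of open sets, you use parallelism of $V=\partial_s\Phi$ along images of horizontal curves together with Chow's theorem to place $\phi(M)$ on a single geodesic, and then a covering-space/equivariance argument to force the image to be either a point or the entire closed geodesic. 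Your route avoids the Reeb-direction computation entirely (horizontal accessibility makes it unnecessary) and replaces the paper's patching with a cleaner global dichotomy; the price is the lifting bookkeeping you flagged. The points to nail down there are: (i) that $\tilde\phi(\tilde M)$ lies in a single lift $\tilde\gamma_0$ --- run Chow's theorem on $\tilde M$ and use uniqueness of path lifts for the horizontal image curves, since the full preimage of $\gamma_0$ in $\tilde N$ need not be a disjoint union of lines when $\gamma_0$ self-intersects; (ii) that $\rho(\pi_1 M)$ actually stabilizes $\tilde\gamma_0$, which requires the image to contain at least two points, but the one-point case is already the desired conclusion; and (iii) that $\bar\phi: M\to\mathbb{R}$ satisfies $\Delta_H\bar\phi=0$, which follows from the composition law because a geodesic has vanishing second fundamental form. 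None of these is a real obstruction, so your argument stands as a genuinely different and arguably tidier finish.
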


\begin{proof}
By contradiction, we assume that $\tilde{\phi}$ is another pseudo-harmonic map homotopic to $\phi$ and $\phi$ is non-trivial. By Theorem \ref{et3}, their geodesic homotopy $\Phi (p ;s ) : M \times [0,1] \rightarrow N$ is pseudo-harmonic. Then a similar calculation as \eqref{e13} shows that
\begin{align*}
\frac{1}{2} \Delta_H |\partial_s \Phi|^2 =& \big| \nabla_H \partial_s \Phi \big|^2 - trace_{G_\theta} \la R^N ( d_H \Phi , \partial_s \Phi) \partial_s \Phi, d_H \Phi \ra \geq 0
\end{align*}
which yields that $|\partial_s \Phi| $ is nonzero constant by Bony's maximum principle (cf. \cite{bony1969principe,jerison1987subelliptic,jost1998subelliptic}). Thus $|\nabla_H \partial_s  \Phi | =0$ and
$$ d \Phi (\eta_\alpha) = a_\alpha (p) \partial_s \Phi $$
where $\{ \eta_\alpha \}$ is a local unitary frame of $T_{1,0} M$ and $a_\alpha$'s are some smooth functions on some open set of $M$. 
Hence the commutation relation \eqref{d13} implies that
\begin{align*}
d\Phi (T) = \frac{\ii}{2m} \big( a_{\ba \alpha} - a_{\alpha \ba} \big) \partial_s \Phi.
\end{align*}
Since $s \mapsto \Phi(p; s) $ is a geodesic, then $\nabla_T \partial_s \Phi = \nabla_{\partial_s} d \Phi (T) =0$. For any $X, Y \in TM$, we find
\begin{align*}
2 d \la d \Phi, \partial_s \Phi \ra (X, Y) =& X \la d \Phi (Y), \partial_s \Phi \ra- Y \la d \Phi (X), \partial_s \Phi \ra - \la d \Phi ([X, Y]), \partial_s \Phi \ra \\
=& \big\la \nabla_X (d \Phi (Y)) - \nabla_Y (d \Phi (X)) - d \Phi ([X, Y]), \partial_s \Phi \big\ra \\
= & 0
\end{align*}
due to the fact that the Levi-Civita connection of $(N, h)$ is torsion-free. For any $q \in M$, by Poincar\'e Lemma, there exists a neighborhood $U_q$ of $q$ and a smooth function $\psi$ defined on $U_q$ such that
\begin{align*}
\la d \Phi, \partial_s \Phi \ra = - d \psi \mbox{ and } \ \psi (q) =\frac{1}{2}.
\end{align*}
Hence the new function $ \Phi( p, s + \psi (p))$ is closed which implies that it is independent of $p \in U_q$. 
Let $\gamma_p $ be the geodesic satisfying $\gamma_p (s) = \Phi (p,s)$. For any $p \in U_q$ and sufficiently small $s$, we find 
\begin{align}
\gamma_p (s +\psi(p) ) = \Phi ( p, s+\psi(p) ) = \Phi (q, s + \psi(q)) = \gamma_{q} (s + \psi (q)) \label{e14}
\end{align} 
By the uniqueness of the geodesic, all geodesic $\gamma_p$ for $p \in U_q$ are just $\gamma_q$ and 
\begin{align*}
\phi (p) = \gamma_p (0) =\gamma_q ( \psi(q) - \psi(p) ). 
\end{align*}
Moreover, by a direct calculation, $ \Delta_H \psi =0 $ which implies that $\psi \equiv \frac{1}{2}$ or $\psi $ has no maximal or minimal point in $U_q$. In the former case, $ \phi (U_q) $ is a single point. In the latter case, by shrinking $U_q$, also denoted by $U_q$, $ \phi (U_q) $ will contain $\gamma_q \big( (\frac{1}{2}- \epsilon, \frac{1}{2} + \epsilon ) \big) $ for some $ \epsilon $ which implies that $ \phi (U_q) $ is an open set in $ \gamma_q $. 

By the connectedness and compactness of $M$, $\phi (M)$ lies in a geodesic $\gamma$ and is closed. Moreover, there exist two open sets $U$  and $V$ of $M$ such that $U \cap V \neq \emptyset $, $\phi (U)$ is an open set of $\gamma$ and $ \phi (V) $ is a finite set. Since $\phi$ is non-trivial, then $U \neq \emptyset$. Hence $ \phi (V) \subset \phi(U) $ which implies that $\phi (M)$ is an open set of $ \gamma $. This leads that $ \phi (M) $ is exactly $ \gamma $. Since $M$ is compact, so is $\gamma$. Thus $\gamma$ is closed which leads a contradiction with the assumption of the image of $\phi$.
\end{proof}

\begin{corollary}
Assume that $(M, HM, J_b, \theta)$ is a closed pseudo-Hermitian manifold and $(N, h)$ is a compact Riemannian manifold with negative sectional curvature. Let $\phi: M \rightarrow N$ be a pseudo-harmonic map. If $ rank(d \phi) \geq 2$ at some point, then $\phi$ is the unique pseudo-harmonic map in its homotopy class.
\end{corollary}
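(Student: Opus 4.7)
The plan is to deduce this corollary directly from the preceding uniqueness theorem by showing that the rank hypothesis rules out the two exceptional cases, namely that the image of $\phi$ is a point or a closed geodesic. Since the previous theorem already handles the general case, no new analytic work is needed; the argument is purely geometric.

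First I would observe that if $\phi(M)$ is a single point, then $d\phi \equiv 0$ everywhere on $M$, so $\mathrm{rank}(d\phi) = 0$ at every point, which directly contradicts the assumption that $\mathrm{rank}(d\phi) \geq 2$ somewhere. This disposes of the first exceptional case.

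Next I would consider the case where $\phi(M)$ is contained in a closed geodesic $\gamma$ of $(N,h)$. Since $\gamma$ is a one-dimensional embedded submanifold of $N$, at every point $p \in M$ the image $d\phi_p(T_pM)$ lies inside $T_{\phi(p)}\gamma$, which is one-dimensional. Consequently $\mathrm{rank}(d\phi_p) \leq 1$ for all $p$, again contradicting the hypothesis that the rank reaches $2$ at some point.

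Having excluded both exceptional cases, the image $\phi(M)$ is neither a single point nor a closed geodesic, so the previous theorem applies and yields that $\phi$ is the unique pseudo-harmonic map in its homotopy class. There is essentially no obstacle in this argument; the only thing to be careful about is ensuring that a closed geodesic is indeed a one-dimensional submanifold so that the rank bound is valid, which follows since $(N,h)$ has negative sectional curvature (so closed geodesics are smooth embedded circles up to covering and the tangent bundle along them is one-dimensional).
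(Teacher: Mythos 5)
Your proof is correct and is exactly the deduction the paper intends: the corollary is stated without proof precisely because it follows immediately from the preceding theorem once the two exceptional cases (constant map, image in a closed geodesic) are ruled out by the rank hypothesis, just as you do. One small caveat: a closed geodesic in a negatively curved manifold need not be embedded (it may self-intersect), but its image is still locally a finite union of one-dimensional arcs, so the bound $\mathrm{rank}(d\phi)\leq 1$ for a smooth map with image in it holds regardless, and your argument goes through.
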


\section*{Acknowledgments}
The authors would like to express their thanks to Professor Yuxin Dong for constant encouragements and valuable discussions.

\bibliographystyle{plain}

\bibliography{renyang}

\vspace{12 pt}

Yibin Ren

\emph{College of Mathematics, Physics and Information Engineering}

\emph{Zhejiang Normal University}

\emph{Jinhua, 321004, Zhejiang, P.R. China}

allenryb@outlook.com

\vspace{12 pt}

Guilin Yang

\emph{School of Mathematics and Statistics}

\emph{Huazhong University of Science and Technology}

\emph{Wuhan 430074, P.R. China}

guilinyang.2016@gmail.com

\end{document}